\documentclass[11pt,twoside,a4paper]{article}
\usepackage{tikz}
\usepackage{charter}
\usepackage{amsthm}
\usepackage{amsmath,stmaryrd, mathrsfs}
\usepackage{amsfonts}
\usepackage{amssymb}
\usepackage{dsfont}
\usepackage{tikz-cd}
\usepackage{subcaption}
\usepackage{float}
\usepackage{url}
\usepackage{hyperref}
\usepackage{empheq}
\usepackage{booktabs}
\usepackage{import}
\usepackage{enumitem}
\usepackage[normalem]{ulem}
\usepackage{soul}
\usepackage{mhequ}

\usepackage{upgreek}

\usepackage{wrapfig}
\usepackage{pgfplots}

\usepackage{mathabx}

\usepackage[margin=2.0cm,top=3cm, bottom=2.7cm]{geometry}

\newcommand{\bigsquare}{\mathord{\vcenter{\hbox{\scalebox{1.8}{$\square$}}}}}

\usepackage{fancyhdr}

\newtheorem{theorem}{Theorem}[section]
\newtheorem{lemma}[theorem]{Lemma}
\newtheorem{proposition}[theorem]{Proposition}
\newtheorem{corollary}[theorem]{Corollary}
\newtheorem{remark}[theorem]{Remark}

\newtheorem{definition}[theorem]{Definition}

\newtheorem{example}{Example}

\newcommand*{\ud}{\mathrm{\,d}}

\newcommand{\RR}{\BF{R}}
\newcommand{\NN}{\BF{N}}
\newcommand{\ZZ}{\BF{Z}}

\newcommand{\TT}{\BF{T}}

\newcommand{\EE}{\BF{E}}

\newcommand{\PP}{\BF{P}}

\newcommand{\mN}{\mathcal{N}}
\newcommand{\mO}{\CB{O}}

\newcommand{\mC}{\mathcal{C}}

\newcommand{\mM}{\CB{M}}

\newcommand{\mD}{\mathcal{D}}

\newcommand{\mR}{\CB{R}}

\newcommand{\tOne}{\raisebox{-0.5ex}{\includegraphics[scale=1.2]{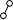}}}

\DeclareMathAlphabet{\BF}{OT1}{ptm}{bx}{n}
\DeclareMathAlphabet{\CB}{U}{BOONDOX-calo}{m}{n}

\newcommand{\rms}{\mathrm s}
\newcommand{\eqdef}{\stackrel{\mbox{\tiny\rm def}}{=}}
\newcommand{\rmd}{\mathrm d}
\newcommand{\Norm}[1]{{\talloblong #1 \talloblong}}

\let\emph\relax
\DeclareTextFontCommand{\emph}{\bfseries}

\newcommand{\nnorm}[1]{{\vert\kern-0.25ex\vert\kern-0.25ex\vert #1 
    \vert\kern-0.25ex\vert\kern-0.25ex\vert}}

\makeatletter 
\newcommand*{\bigcdot}{}
\DeclareRobustCommand*{\bigcdot}{%
  \mathbin{\mathpalette\bigcdot@{}}%
}
\newcommand*{\bigcdot@scalefactor}{.5}
\newcommand*{\bigcdot@widthfactor}{1.15}
\newcommand*{\bigcdot@}[2]{%
  \sbox0{$#1\vcenter{}$}
  \sbox2{$#1\cdot\m@th$}%
  \hbox to \bigcdot@widthfactor\wd2{%
    \hfil
    \raise\ht0\hbox{%
      \scalebox{\bigcdot@scalefactor}{%
        \lower\ht0\hbox{$#1\bullet\m@th$}%
      }%
    }%
    \hfil
  }%
}
\makeatother

\title{Discrete RG flow of a hierarchical singular SPDE}
\author{L\'eonard Ferdinand\footnote{Email: \href{mailto:leonard.ferdinand@mis.mpg.de}{leonard.ferdinand@mis.mpg.de}, Max Planck Institute for Mathematics in the Sciences, Leipzig, Germany.} 
\ and\  Simon Gabriel\footnote{Email: \href{mailto:s.gabriel@berkeley.edu}{s.gabriel@berkeley.edu},
Department of Mathematics, University of California, Berkeley, USA.}
}
\date{ }

\begin{document}
\pagestyle{fancy}

\maketitle

\vspace{-1cm}

\begin{abstract}
    This paper illustrates the Renormalisation Group (RG) approach to singular SPDEs following a framework introduced by Kupiainen \cite{Kupiainen2016}. 
    We study a linear elliptic SPDE with a hierarchical Laplace operator and multiplicative noise, in two dimensions.
    Although this model is a significant simplification, it captures the core mechanisms of the RG method in a transparent setting.
    Particular emphasis is placed on the dynamical system governing the flow of the effective force coefficients, the central object of the RG method.
\end{abstract}

\vspace{-.1cm}

\noindent\hspace{1cm}{\small\textit{{Keywords.}} Renormalisation Group; Singular SPDEs; Anderson model.}

\noindent\hspace{1cm}{\small\textit{{MSC classification.}} Primary: 60H15, Secondary: 35R60; 81T17.}

\vspace{-.35cm}

\setcounter{tocdepth}{2}
\begingroup
  \hypersetup{hidelinks}
  \tableofcontents
\endgroup

\section{Introduction}

In \cite{Kupiainen2016}, Kupiainen introduced a Renormalisation Group (RG) approach to
singular SPDEs, providing 
an alternative to the theories of Regularity Structures \cite{Hai14} and Paracontrolled Distributions \cite{GIP15}.
More recently, interest in RG approaches to singular SPDEs has grown further, 
in particular after Duch introduced a continuous alternative \cite{Duch21}.
Kupiainen’s original treatment focussed on the dynamic $\varphi^4_3$ equation \cite{Kupiainen2016},
and the method was subsequently extended to an anisotropic KPZ equation in \cite{KM17}.
Similar to Regularity Structures and Paracontrolled Distributions, the RG approach relies on
the introduction of a new reference scale, which is larger than the regularisation scale,
and facilitates the construction of the solution.
In the context of RG, the strategy is to derive the effective equation solved by the
restriction of the solution to this reference scale. 
Effective equations at different scales are then related through a dynamical system. 

Kupiainen's work requires a familiarity with Cauchy estimates and rescalings
which are common in the RG community, but less so in the SPDE community. 
Much of the technical difficulty stems from the presence of nonlocal 
terms in the evolution of the effective equation, which have to be disentangled from their
local counterparts via localisation procedures.
In the present work, we aim to clarify the strategy of the RG framework
by setting aside the difficulties associated with any nonlocal behaviour. 
To this end, we work with a hierarchical model, for which the effective 
equation remains local at every scale.
More specifically, we study the hierarchical Anderson equation, which for $ g \in \RR$ is given by
\begin{equation}\label{e_anderson}
    \begin{aligned}
        0=
        \Delta_{H} u (x)  +
        (g \xi(x) - \infty ) u (x)   + 1\,, \qquad x \in \TT^{2}\,,
    \end{aligned}
\end{equation}
where $ \Delta_{H}$ denotes the hierarchical Laplace operator, which we
introduce in the next section. 
Moreover, $ \TT^{2} \eqdef [-1/2,1/2)^{2}$ denotes the two--dimensional torus with periodic
boundary conditions, and $ \xi$ is a white noise on $\TT^{2}$ on a fixed probability space $ (\Omega, \CB F , \PP) $.

The hierarchic component of our model allows
for major simplification, avoiding any localisation procedure, 
or Cauchy estimates.
While the SPDE \eqref{e_anderson} is linear, it allows us to keep notation to a
minimum, and still provide the
key conceptual ideas of \cite{Kupiainen2016}.
We also unfold with great care how to obtain the dynamical
system~\eqref{e_rg_system} that governs the evolution of the effective equation, when
considered on a unit reference scale. 
In Kupiainen’s work, this structure is treated less explicitly.
The formulation of the dynamical system~\eqref{e_rg_system} will be familiar 
to readers with a background in Renormalisation Group theory. 
Indeed, we show that the discrete flow of an effective equation across scales 
is analogous to the discrete flow of an effective action in the context of the RG,
see for example, \cite[Section~5.3.1]{rgBook}. 
We therefore emphasise clarifying the relation between these two perspectives 
and facilitating communication across communities.

We now turn to the analytical difficulties posed by the equation itself.
The SPDE \eqref{e_anderson} contains the singular product $ \xi u $, which
cannot be made sense of by classical means, and needs to given meaning through
approximation. 
To this end, we introduce the discrete tori
\begin{equation*}
    \begin{aligned}
        {\Lambda}_{-N}\eqdef\big\{-\tfrac{1}{2}(1-L^{-N}),
        \ldots,-2L^{-N},-L^{-N},0,L^{-N},2L^{-N},\dots, \tfrac{1}{2}(1-L^{-N}) \big\}^{d}
        \subset \big[-\tfrac{1}{2},\tfrac{1}{2}\big)^d
        \,.
    \end{aligned}
\end{equation*}
The regularised version of \eqref{e_anderson} on the lattice $\Lambda_{-N}$,
referred to as the \emph{bare equation}, then takes the form
\begin{equation}\label{e_anderson_mol}
    (- \underline{\Delta}_{H} )u_{ N} = ( g\,\underline \xi{}_{N} + r_{N}) u_{ N} +1\,,
\end{equation}
with a postulated \emph{bare mass} of the form $r_N\equiv r_N(r,g)\eqdef r-
(1-L^{-2})Ng^{2}$, for some $ r \geqslant 0 $. 
Here, $u_N\in  \mC (\Lambda_{-N})$ and $  \underline{\Delta}_{H} \equiv \underline{\Delta}_{H,-N}$  denotes the hierarchical Laplace operator on $
    {\Lambda}_{-N}$.
The regularised noise
$ \underline\xi{}_{N}$ is given in terms of i.i.d. Gaussian random variables
\begin{equation*}
    \begin{aligned}
        \underline{\xi}{}_{N} ( x) \sim \mN (
        0 , L^{2(2-\alpha)N})\,, \quad x \in
        {\Lambda}_{-N}\,,
    \end{aligned}
\end{equation*}
where $ \alpha \eqdef 2 -
    d/2$ denotes the expected regularity of the solution $u$.
Furthermore, we embed $ u_N$ into $ \TT^d$ by linear interpolation between lattice points.

Our main result is as follows.
\begin{theorem}\label{thm_main}
    Let $d = 2$ and $r \geqslant 1$.
    There exists an odd $L \in \NN$, sufficiently large and depending only on~$r$,
    such that the following holds.

    For every $g \in (0,1]$, there exists an event $\Omega_g \subset \Omega$ such
    that $\PP(\Omega_g) \geqslant 1 - C e^{-c g^{-2}}$.
    On $\Omega_g$, the SPDE~\eqref{e_anderson_mol} with renormalised mass $r$ and
    renormalised coupling constant $g$  admits a unique solution
    $u_N$ on $\Lambda^N$, for every $N \in \NN$.
    Moreover, there exists a function $u^\star \in \CB C^{1-}(\TT^{2})$ such that
    \begin{equation*}
        \begin{aligned}
            u_N \to u^\star
            \qquad \text{in } \CB C^{1-}(\TT^{2}),
            \quad \text{as } N \to \infty \,.
        \end{aligned}
    \end{equation*}
    The H\"older norm $\| \bigcdot \|_{\CB C^{1-\kappa}}$ is defined in
    Definition~\ref{def_besov}, and the events
    $\Omega_g$ are characterised in \eqref{eq_def_Omega}.
\end{theorem}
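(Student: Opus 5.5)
The proof runs the discrete RG flow: integrate out the hierarchical scales one at a time, and control the flow of effective force coefficients on unit-scale lattices. Because the hierarchical Laplacian decomposes as a sum of projections onto block-constant functions at each scale, restricting the solution $u_N$ of \eqref{e_anderson_mol} to block averages at scale $L^{-n}$ yields again a \emph{local} equation. In this equation each block variable satisfies an effective equation $(-\underline\Delta_{H,-n}) u^{(n)} = F_n(u^{(n)}) + 1$, where the effective force $F_n$ acts blockwise. Since the model is linear, $F_n$ acts pointwise as multiplication by an effective random coefficient $g\,\xi_n + r_n^{\mathrm{eff}} + $ (higher-order remainder). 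After rescaling to unit lattice spacing, the map $F_{n}\mapsto F_{n-1}$ is explicit. On each block of $L^2$ sites one solves the fine-scale fluctuation equation, a finite-dimensional linear problem of the form $u = (1 - G\,V)^{-1}(\dots)$ with $G$ the inverse of the fine-scale part of $-\underline\Delta_H$. Averaging then gives the new coefficient. This is the dynamical system \eqref{e_rg_system}, and it is the object I analyse.

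\textbf{Step 1: Neumann expansion.} Expand the block map in a Neumann series. The linear term maps the noise at scale $n$ to its block average. Under the scaling $L^{2(2-\alpha)N}$ with $\alpha = 1$ in $d=2$, this term is marginal: the rescaled averaged noise is again standard Gaussian of the same variance. The quadratic term $g^2 \xi G \xi$ has expectation $(1-L^{-2})g^2$ per step, since $\mathbb E\,\xi G\xi$ reduces to the diagonal of the fine-scale Green's function, which is exactly of order one after rescaling. This expectation is cancelled by the counterterm $-(1-L^{-2})Ng^2$, one unit per scale. The centred part is a Wick-product-type fluctuation.

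\textbf{Step 2: Good events and a priori bounds.} Define $\Omega_g$ as the event that all rescaled block noises and centred quadratic fluctuations at all scales and sites are bounded by $g^{-1}$ times a slowly growing weight, e.g.\ $|\xi_n(x)|\le g^{-1}\delta\,(1+n)^{\theta}$. A union bound over the $L^{2n}$ sites at scale $n$, combined with Gaussian tails (and hypercontractivity for the chaos of order two), gives $\PP(\Omega_g^c)\le C e^{-c g^{-2}}$. On $\Omega_g$, I prove by induction down the scales that the effective coefficient stays in a ball around $r\ge 1$. The positive mass keeps $-\underline\Delta_H + r$ uniformly invertible, and the remainder contracts by a factor $L^{-\epsilon}$ per step since every term beyond second order is irrelevant. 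I need $L$ large to make this contraction beat the constants, which is why $L$ depends on $r$. Uniform invertibility gives existence and uniqueness of $u_N$ for every $N$.

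\textbf{Step 3: Convergence.} I will show that block averages of $u_N$ at a fixed scale $n$ form a Cauchy sequence in $N$, with difference bounded by $L^{-\epsilon(N-n)}$. The reason is that the effective equations at scale $n$ coming from bare scales $N$ and $N+1$ differ only through the contracting remainder. Hölder regularity $1-\kappa$ then follows from the Besov-type characterisation of Definition~\ref{def_besov} via increments of block averages. Increments at scale $L^{-n}$ are bounded by $L^{-n}$ times the effective coefficient, up to logarithmic (hence $\kappa$-absorbable) losses. This yields tightness and convergence in $\CB C^{1-}$ to some $u^\star$, and the linear interpolation error is handled similarly.

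The main obstacle is Step 2: keeping the marginal noise and the fluctuating second-order terms from accumulating over $N$ scales. My first attempt would be to track the coefficient as $g\xi_n + r + \rho_n$, where $\rho_n$ collects only irrelevant terms. I would then bound $\rho_n$ in a weighted norm using the $L^{-2}$ gain from averaging centred quantities (a variance reduction by $L^{-2}$ versus amplitude $O(g^2)$), closing the induction for $g\le 1$ on $\Omega_g$.
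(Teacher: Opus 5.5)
Your overall architecture matches the paper's: rescaling to unit lattices, a Neumann series for the block fluctuation equation, the counterterm absorbing $\EE[Q_1(\xi_{-n}P_1\xi_{-n})]=1-L^{-2}$ at each step, a good event, pathwise control of an irrelevant remainder for $L$ large, Cauchy in $N$, and Besov reconstruction. However, your treatment of the centred second-order term has a genuine gap. In $d=2$ this term is only \emph{probabilistically} irrelevant. Its flow is
\begin{equation*}
\tOne_{\!\!-n+1}=L^{2-2\alpha}\big(S_1Q_1\tOne_{\!\!-n}+S_1Q_1(\xi_{-n}\diamond P_1\xi_{-n})\big)\,,\qquad L^{2-2\alpha}=1\,,
\end{equation*}
so deterministically it is marginal, not contracting. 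If $\Omega_g$ only controls the one-step Wick terms, the best pathwise bound is $\|\tOne^{(N)}_{\!\!-n}\|_\infty\leqslant\sum_{k=n+1}^{N}\|S_1Q_1(\xi_{-k}\diamond P_1\xi_{-k})\|_\infty$. This grows linearly in $N-n$ and diverges as $N\to\infty$. Your remedy is to put this term into $\rho_n$ and close the induction on $\Omega_g$ using the $L^{-2}$ variance reduction. That cannot work: once $\omega$ is fixed there is no variance left to reduce, and averaging only gives $\|S_1Q_1f\|_\infty\leqslant\|f\|_\infty$. The same problem affects Step~3. The scale-$n$ equations for cutoffs $N$ and $N+1$ differ not only through the contracting remainder but also through $\tOne^{(N+1)}_{\!\!-n}-\tOne^{(N)}_{\!\!-n}=S_{N+1-n}Q_{N+1-n}\big((P_1\xi_{-N-1})^{\diamond 2}\big)$, which is small only in law.

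The missing idea is to treat the \emph{accumulated} chaos $\tOne^{(N)}_{\!\!-n}=\sum_{k=1}^{N-n}S_kQ_k(\xi_{-n-k}\diamond P_1\xi_{-n-k})$ as part of the enhanced noise, and to spend the variance gain \emph{before} fixing $\omega$. Telescope in the cutoff, $\tOne^{(N)}_{\!\!-n}=\sum_{K=n+1}^{N}(\tOne^{(K)}_{\!\!-n}-\tOne^{(K-1)}_{\!\!-n})$; by the Wick covariance $2(\delta_{x,0}-L^{-2})^2$, each increment has variance $2(1-L^{-2})L^{-2(K-n)}$. Hypercontractivity and a sum over the $L^{2n}$ sites then give $\EE\big[\big(\sup_N\max_{n\leqslant N}L^{-\kappa n}\|\tOne^{(N)}_{\!\!-n}\|_\infty\big)^p\big]\lesssim p^p$. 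The event $\Omega_g$ must be defined through this uniform-in-$N$ quantity, comparing $|\tOne|^{1/2}$ rather than $|\tOne|$ to $g^{-1}$; your threshold $g^{-1}$ on a second-chaos variable only yields tails $e^{-cg^{-1}}$. The same telescoping gives almost sure Cauchy-ness of $\tOne^{(N)}$. Only after this does a pathwise induction close, and only for the part of order $\lambda^3$, the paper's $\Psi$, whose linear factor is $L^{2-3\alpha}=L^{-1}$. Its source terms, e.g.\ $S_1Q_1(\tOne_{\!\!-n}P_1\xi_{-n})$ and the term $P_1\tOne_{\!\!-n}$ in the equation for $\mR_{-n+1}$, need the uniform bound on $\tOne$ as input. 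A smaller point: the effective mass does not stay near $r$; in physical units it is $r-n(1-L^{-2})g^2$. The block Neumann series converges because, on the unit lattice, the potential is $O(L^{-(1-\kappa)n})$ while $P_1(-\Delta_H)^{-1}=P_1$; this is where $L$ must be large compared to $r$. The bound $r\geqslant1$ is used only at $n=0$, to solve $(g\xi_0+g^2\tOne_{0}+g^3\Psi_0+r)v_0=-1$.
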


The bare mass $ r_{N}$ is the sum of the \emph{renormalised mass} $r$ and the \emph{mass counterterm}
$-(1-L^{-2})Ng^2$.
The latter diverges as $N\uparrow\infty$, in agreement with the presence of the term $-\infty u$ in \eqref{e_anderson}.
Moreover, we notice that while $u$ in \eqref{e_anderson} appeared to be fully characterised by
the \emph{renormalised coupling constant} $g$, it is in fact necessary to specify
two quantities to identify $u_N$, and its limit $ u^{\star}$: The renormalised
coupling constant $g\in\RR$ and the renormalised mass $r\in\RR$.

Since we only consider the massive equation, we assume that $r \geqslant 1$.
Moreover, the condition on a small coupling $g\in(0,1]$ is necessary to deal with the
large field problem.
Of course, taking $g\in[-1,0)$ or $r\leqslant-1$ would not bring any modification,
but we choose to fix the signs to simplify the notation.\\

The proof of Theorem~\ref{thm_main} will be given at the end of Section~\ref{sec_unit_lattice_v}.
The strategy is to derive the effective equation solved by a coarse grained version of $u_N$ at a
scale $L^{-n}\geqslant L^{-N}$. 
The key step is the derivation of a dynamical system which relates
the effective equation at scale $L^{-n}$ to the effective equation at the subsequent scale
$L^{-n+1}$. 
At each step of the RG, we rescale the effective equation back to the
unit lattice, which is necessary to compare across different scales in a
meaningful way, see also Remark~\ref{rem_whyunit2}.

\begin{remark}[Random coupling]
	Theorem~\ref{thm_main} is equivalent to saying that there exists a random
	coupling $g(\omega)$, $ \omega \in \Omega$, with negative $p$-th moment growing
	like $\sqrt{p}$, such that the equation \eqref{e_anderson_mol} is almost surely well-posed. 
	In fact, the true quantity that needs to be taken random, in order to avoid the
	large field problem, is the quotient $g/r$. In other words, instead of a small random coupling 
	$g(\omega)$, it would equally be possible to take a large random mass $r(\omega)$. 
	We refrain from doing so to keep the notation lighter, and because the former choice is more common~\cite{Duch22}. 
\end{remark}

   The SPDE \eqref{e_anderson} is the elliptic (and hierarchical) analogue of the parabolic
Anderson model
\begin{equation*}
    \begin{aligned}
        ( \partial_{t} - \Delta ) u = (g\xi - \infty) u \,,
    \end{aligned}
\end{equation*}
describing the evolution of a particle in a random medium, see for example
\cite{Hai14,GIP15,hairer2015simple} among
others. In the Euclidean setting, the elliptic equation \eqref{e_anderson} has been 
studied previously for $
d=2,3$, see for example \cite{allez2015continuous,Lab19,broux2023example}.
In $ d=4$, the equation is scaling critical, and falls outside the reach of subcritical
theories. Fluctuations of the critical (Euclidean) equation are investigated in~\cite{GabRos26}.

In the present paper, we restrict attention to the case $ d = 2 $
and do not treat nonlinearities of the form $\sigma(u)\xi$.
However, the method should extend to the full subcritical regime ($ \alpha>0$) under strong enough assumptions on the nonlinearity 
$\sigma$, at the expense of enlarging the noise structure and 
incorporating additional stochastic terms into the dynamical system.
Lastly, our assumption that the noise $\xi$ is Gaussian is not essential, however, convenient as we use hypercontractivity for the necessary moment estimates in Lemma~\ref{lem_stoch}.

\subsection*{Outline of the paper}

We begin in Section~\ref{sec_setup} by introducing the lattice operations, in particular the hierarchical Laplacian.
In Section~\ref{sec_unit_lattice_v}, we set the stage by
rescaling the problem to the unit scale, which allows for comparison across scales, and
prove Theorem~\ref{thm_main}.
We place particular emphasis on the derivation of the force coefficients and their connection to the effective equation.
Next, we perturbatively derive the dynamical system of force coefficients in
Section~\ref{sec_pert}, which is the key idea of RG. 
In Section~\ref{sec_rig_RG} we will make this perturbative analysis rigorous. 
We conclude the article by proving convergence of the effective force and the
renormalised solutions in 
Section~\ref{sec_conv}.

\subsection*{Acknowledgements}

The authors express their gratitude  towards Ajay Chandra and Hendrik Weber for
suggesting the investigation of  hierarchical models, as well as helpful
discussions.

This material is based upon work supported by the National Science Foundation under Grant No. DMS-1928930, while the authors were in residence at the Simons Laufer Mathematical Sciences Institute in Berkeley, California, during the fall semester of 2025.
Moreover, SG acknowledges financial support through the ERC Grant 101045082, held by Hendrik Weber, and the DFG
Excellence Cluster in Münster EXC 2044–390685587.

\section{Background on hierarchical lattice operators}\label{sec_setup}

In this section, we introduce the regularised lattice and the hierarchical Laplace
operator, closely following \cite{rgBook}.
We then set the framework by defining rescaling operations and a hierarchical analogue of Besov spaces.
As a first application, we regularise the white noise
$ \xi$ and show that it belongs to the natural Besov space, which also allows
the reader to become familiar with the concepts.

\begin{definition}[Lattices]
    We fix a coarse graining scale $L \geqslant 3 $, which is an odd\footnote{
        We could also consider even integers, however, it is convenient to consider $L$
odd such that each square $\bigsquare$ has a centre point. Since $L$ is chosen large
eventually, we did not strive for the greatest generality here.}
    integer, eventually taken sufficiently large.
    Then for any pair of integers $m \leqslant M$, we set
    \begin{equs}
        \Lambda_{m}^M\eqdef  \Big(\frac{L^m\ZZ}{L^M\ZZ}\Big)^d \subset \big[-
        \tfrac{1}{2} L^M , \tfrac{1}{2} L^M\big)^d\,,
    \end{equs}
    to be the lattice of side-length $L^{M}$ and spacing $ L^{m}$, which we identify with
    \begin{equs}
        \Lambda_m^M  \equiv
        \big\{-\tfrac{1}{2}(L^M-L^m), \ldots,-2L^m,-L^m,0,L^m,2L^m,\dots,
        \tfrac{1}{2}(L^{M}-L^m) \big\}^{d}  \,.
    \end{equs}
    Importantly, we have $\Lambda_m\subset \TT^d$, and $\Lambda^{M}\subset L^M\TT^d$.
    Whenever one of these indices equals zero, we drop it from the notation, and
    write
    $\Lambda_m\equiv\Lambda_m^0$, for  $m\leqslant0$, and
    $\Lambda^M\equiv\Lambda_0^M$, for $M\geqslant0$.
\end{definition}

We equip $ \Lambda_{m}^{M}$ with the product topology, therefore,
we denote the set of bounded functions $ \Lambda_{m}^{M} \to \RR$, which are
trivially continuous, by $ \mC (\Lambda_{m}^{M})$.
For $n\in\NN$, and for every $x\in\Lambda_{-n}$, we denote by
$\bigsquare_{-n}(x)\subset\TT^d$ the square of side-length $L^{-n}$
centred at $x$.
For example, $ \Lambda_{-2}$ with $L=3$ can be represented in terms of 

\begin{equation*}
	\begin{aligned}
\begin{tikzpicture}[x=1cm,y=1cm, line cap=round, line join=round, scale=0.75]

\def\Outer{6.75}      
\def\M{0}             
\def\Fi{0.35}         
\def\s{1.6}           
\def\Sp{0.6}          
\def\dotr{2.2pt}      
\def\di{0.20}         

\draw (\M,\M) rectangle ++(\Outer,\Outer);

\foreach \r in {0,1,2}{
  \foreach \c in {0,1,2}{

    \pgfmathsetmacro{\x}{\M + \Fi + \c*(\s+\Sp)}
    \pgfmathsetmacro{\y}{\M + \Fi + \r*(\s+\Sp)}

    \draw (\x,\y) rectangle ++(\s,\s);

    \pgfmathsetmacro{\step}{(\s-2*\di)/2}

    \foreach \dr in {0,1,2}{
      \foreach \dc in {0,1,2}{
        \pgfmathsetmacro{\dx}{\x + \di + \dc*\step}
        \pgfmathsetmacro{\dy}{\y + \di + \dr*\step}
        \fill (\dx,\dy) circle (\dotr);
      }
    }
  }
}


\node[anchor=east] (lbl0) at (\M-1, \M+0.5*\Outer) {$\bigsquare_0$};
\draw[->, thick] (lbl0.east) -- (\M-0.1, \M+0.5*\Outer);

\pgfmathsetmacro{\xMR}{\M + \Fi + 2*(\s+\Sp)}
\pgfmathsetmacro{\yMR}{\M + \Fi + 1*(\s+\Sp)}
\coordinate (t1) at (\xMR+\s, \yMR+0.5*\s-.4);

\node[anchor=west] (lbl1) at (\M+\Outer+0.6, \yMR+0.5*\s-.7) {$ \bigsquare_{-1}(x)$};
\draw[->, thick] (lbl1.west) -- (t1);

\pgfmathsetmacro{\xTR}{\M + \Fi + 2*(\s+\Sp)}
\pgfmathsetmacro{\yTR}{\M + \Fi + 1*(\s+\Sp)}
\pgfmathsetmacro{\step}{(\s-2*\di)/2}
\coordinate (t2) at (\xTR+\di+\step+.1, \yTR+\di+\step+.05); 

\node[anchor=west] (lbl2) at (\M+\Outer+0.6, \yTR+\di+\step+0.8) {$ x = \bigsquare_{-2}(x)$};
\draw[->, thick] (lbl2.west) -- (t2);

\end{tikzpicture}
\end{aligned}
\end{equation*}

\subsection{Averaging and fluctuation operators}

In this section, we introduce the operators needed to define the hierarchical Laplace operator. 
We begin with the averaging operator acting on distributions on $\TT^{d}$, which can be viewed 
as testing a distribution against a constant test function.

\begin{definition}[Averaging operators]
    For $n \in \NN$, we define the \emph{averaging operator}
    $\underline Q{}_{-n}: \mD'(\TT^d)\rightarrow  \mC
        (\Lambda_{-n})$ with
    \begin{equs}
        \underline Q{}_{-n}f(x) \eqdef L^{dn } \int_{\bigsquare_{-n}(x)} f(y)\rmd y \,,\quad x\in\Lambda_{-n}\,.
    \end{equs}
    In particular, we have $ \underline{Q}{}_{0} = \int_{ \TT^{d}} f (y) \ud y$.
    By convention, we also enforce $\underline{Q}{}_n\equiv0$ for all $ n >0$.
\end{definition}
For $ n\leqslant N$, $\underline Q{}_{-n}$ restricts to an operator $\underline
    Q{}^{(N)}_{-n}: \mC (\Lambda_{-N})\rightarrow  \mC (\Lambda_{-n})$, which we also
    denote by $\underline Q{}_{-n}$ whenever clear from context:
\begin{equs}
    \underline{Q}{}_{-n} f ( x) =L^{-  (N-n)d} \sum_{ y \in \bigsquare_{-n} ( x)\cap \Lambda_{-N}} f ( y)\,, \quad
    x \in {\Lambda}_{-n}\,.
\end{equs}
Note that $\underline{Q}{}_0$ projects onto constants,
while $\underline{Q}{}_{-N}$ restricts to the identity on $ \mC (\Lambda_{-N})$.
For convenience, we will also enforce $\underline
    Q{}^{(N)}_{-n}\equiv 0$ whenever $n>N$.

\begin{example}[Regularised white noise]\label{exmpl_white_noise}
	The averaging operator $\underline{Q}{}_{-n}$ allows us to define white noise on $\Lambda_{-n}$,
	which can be interpreted as a scale-$n$ regularisation of the original white noise $\xi$ on $\TT^{d}$.
    Namely, we define
    \begin{equation}\label{eq_def_reg_noise}
        \begin{aligned}
            \underline{\xi}{}_{n} (x) \eqdef \underline{Q}{}_{-n} \xi (x)\sim
            \mN ( 0 , L^{dn})\,, \qquad x \in \Lambda_{-n} \,,
        \end{aligned}
    \end{equation}
    which yields an independent and identically distributed family of centred
    normal random variables.
\end{example}

Having introduced the averaging operator, we can describe fluctuations across scales via
the corresponding fluctuation operators.

\begin{definition}[Fluctuation operators]
    For $n \geqslant1$, the \emph{fluctuation operator}
    $\underline{P}{}_{-n}:C^\infty(\TT^d)\rightarrow  \mC
        (\Lambda_{-n})\cap\ker(\underline Q{}_{-n+1})$ is given by
    \begin{equation*}
        \begin{aligned}
            \underline{P}{}_{-n} \eqdef  \underline{Q}{}_{-n} - \underline{Q}{}_{-n+1} \,.
        \end{aligned}
    \end{equation*}
\end{definition}

The family $(\underline{P}{}_{-n})_{1\leqslant n\leqslant N}$ allows for a decomposition
of any function $f \in \mC(\Lambda_{-N})$ into its scale components.
Namely, for any $ f \in  \mC (\Lambda_{-N}) $, we can write
\begin{equation}\label{eq:decompositionf}
    \begin{aligned}
        f = \underline{Q}{}_{0}f + \sum_{n = 1}^{N} \underline{P}{}_{-n} f\,.
    \end{aligned}
\end{equation}
In fact, these projections are orthogonal, because
\begin{equation}\label{eq:relPandQunderlined}
    \begin{aligned}
        \underline{P}{}_{-m} \underline{P}{}_{-n} = \underline{P}{}_{-m} \delta_{m , n}\,,
        \qquad \text{since} \qquad
        \underline{Q}{}_{- m} \underline{Q}{}_{-n}
        = \underline{Q}{}_{- (m \wedge n)}\,.
    \end{aligned}
\end{equation}

Rather than working directly on $ {\Lambda}_{-N}$ directly, it will be convenient to rescale
the problem to the unit lattice of size $L^N$, $\Lambda^{N} = L^N{\Lambda}_{-N} $, which 
places us in the framework of \cite{rgBook}.
We return to this point in Remark~\ref{rem:unitlattices} at the end of this section.
To carry out such rescaling, we introduce scaling operators that allow functions to be pulled back and forth between the different lattices. 

\begin{definition}[Operators on unit lattices]\label{def_unit_ops}
    For $k\in \ZZ$, we introduce the \textnormal{scaling operator}
    $S_k: \mC (\Lambda_{m}^{M})\rightarrow  \mC (\Lambda_{m-k}^{M-k})$ which acts as
    \begin{equs}
        S_k f (\bigcdot) \eqdef f(L^k\,\bigcdot)\,.
    \end{equs}
    We will be mostly interested in the cases $S_N: \mC
        (\Lambda^{N})\rightarrow  \mC (\Lambda_{-N})$ and $S_{-N}: \mC
        (\Lambda_{-N})\rightarrow  \mC (\Lambda^{N})$.

	Notably, the scaling operators allow us to scale back $\underline{Q}{}^{(N)}_{-n}$ and
    $\underline{P}{}^{(N)}_{-n}$ to $\Lambda^N$, and define
	\begin{enumerate}
		\item[(a)] $Q^{(N)}_n:  \mC
        (\Lambda^{N})\rightarrow  \mC (\Lambda_{n}^{N})$ with $$  Q^{(N)}_n\eqdef
    S_{-N}\underline{Q}{}^{(N)}_{-N+n}S_N\,,$$

\item[(b)]  $P^{(N)}_n:  \mC (\Lambda^{N})\rightarrow  \mC
        (\Lambda_{n-1}^{N})\cap\ker(Q_1S_{n-1})$ with 
	$$ P^{(N)}_n\eqdef S_{-N}\underline{P}{}^{(N)}_{-N+n-1}S_N\,.$$
	\end{enumerate}
    Again, when no ambiguity can arise, we drop the dependency on $N$ in the superscript.
\end{definition}

\begin{lemma}
	We have that $Q_{0}$ is the identity\footnote{Contrary to $ \underline{Q}$, where
	$ \underline{Q}{}_{-N}$ played the role of the identity on $ \Lambda_{-N}$.}, $Q_N$ is the projector onto constants, $P_n=Q_{n-1}-Q_n$, and $Q_n$ acts by averaging as
    \begin{equs}\label{eq_Qn_BBS}
        Q_{n} g (x)
        =
        L^{- nd} \sum_{y \in \bigsquare_n( x)\cap\Lambda^N} g(y)\,,\quad x\in\Lambda^N_n\,,
    \end{equs}
    where $\bigsquare_n(x)$ denotes unique square of side-length $L^n$ of $L^N\TT^d$
    centred at $x$.
\end{lemma}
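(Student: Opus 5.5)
The plan is to derive everything from the single averaging formula \eqref{eq_Qn_BBS}, and to obtain that formula by unwinding the definition $Q_n = S_{-N}\underline{Q}{}_{-N+n}S_N$ pointwise. The other three claims then follow by specialising $n$ or by linearity.

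First, I would fix $g\in\mC(\Lambda^N)$ and $x\in\Lambda^N_n$. By definition of $S_{-N}$ we have $Q_n g(x) = (\underline{Q}{}_{-N+n} S_N g)(L^{-N}x)$, and the point $L^{-N}x$ lies in $\Lambda_{-(N-n)}$. Next I would apply the restricted formula for $\underline{Q}{}^{(N)}_{-(N-n)}$ on $\mC(\Lambda_{-N})$. This gives
\[
L^{-nd}\sum_{y'\in\bigsquare_{-(N-n)}(L^{-N}x)\cap\Lambda_{-N}} g(L^N y').
\]
The change of variables $y = L^N y'$ is a bijection $\Lambda_{-N}\to\Lambda^N$. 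It maps $\bigsquare_{-(N-n)}(L^{-N}x)$ onto the square of side length $L^{N}L^{-(N-n)} = L^n$ centred at $x$, namely $\bigsquare_n(x)\subset L^N\TT^d$. This yields \eqref{eq_Qn_BBS}. The only care needed is bookkeeping: the index of $\underline{Q}$ is $-(N-n)$, so the prefactor is $L^{-(N-(N-n))d}=L^{-nd}$, and the periodic identification is compatible with scaling by $L^N$.

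Given \eqref{eq_Qn_BBS}, the identity $Q_0=\mathrm{Id}$ follows because $\bigsquare_0(x)\cap\Lambda^N=\{x\}$ (squares of side $1$ centred at unit lattice points). Equivalently, $\underline{Q}{}_{-N}$ is the identity on $\mC(\Lambda_{-N})$ and $S_{-N}S_N=\mathrm{Id}$. Likewise $Q_N = S_{-N}\underline{Q}{}_0 S_N$ averages over the single square $L^N\TT^d$, so it is the projection onto constants. Finally, $P_n = Q_{n-1}-Q_n$ follows from linearity of the conjugation by $S_{\pm N}$:
\[
S_{-N}\big(\underline{Q}{}_{-N+n-1}-\underline{Q}{}_{-N+n}\big)S_N = Q_{n-1}-Q_n .
\]
Here I use the definition $\underline{P}{}_{-m}=\underline{Q}{}_{-m}-\underline{Q}{}_{-m+1}$ with $m=N-n+1$.

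I expect no real obstacle. The main point of care is the index bookkeeping in the definition of $P_n$: one must check that $\underline{P}{}_{-N+n-1}$ corresponds to $m=N-n+1$, which produces $Q_{n-1}-Q_n$ rather than a shifted pair. The other point is that the scaled square is exactly $\bigsquare_n(x)$, and this uses that $L$ is odd, so that the squares are centred at lattice points.
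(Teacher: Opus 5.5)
Your proposal is correct and follows essentially the same route as the paper. Both arguments unwind $Q_n=S_{-N}\underline{Q}{}_{-N+n}S_N$ pointwise, identify the rescaled block $L^N\bigsquare_{-N+n}(L^{-N}x)$ with $\bigsquare_n(x)$, and obtain $P_n=Q_{n-1}-Q_n$ by conjugating $\underline{P}{}_{-N+n-1}=\underline{Q}{}_{-N+n-1}-\underline{Q}{}_{-N+n}$ with $S_{\pm N}$.

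One minor correction: identifying the rescaled square with $\bigsquare_n(x)$ is pure geometry and does not use that $L$ is odd. The oddness of $L$ instead guarantees the nesting $\Lambda^N_n\subset\Lambda^N$ of the centred lattices, which is already built into the setup.
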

\begin{proof}
    The first two claims are immediate.
    Next, we compute for every $x\in\Lambda^N$
    \begin{equs}
        Q_ng(x)=S_{-N}\underline{Q}{}_{-N+n}S_Ng(x)=L^{-nd}\sum_{y\in\bigsquare_{-N+n}(L^{-N}x)\cap\Lambda_{-N}}g(L^Ny)\,.
    \end{equs}
    To conclude \eqref{eq_Qn_BBS}, we notice that $y\in\bigsquare_{-N+n}(L^{-N}x)\cap\Lambda_{-N}$ is equivalent to $L^Ny\in\bigsquare_{n}(x)\cap\Lambda^{N}$.

    To prove the third claim, we compute
    \begin{equs}
        P_n=S_{-N}\underline{P}{}_{-N+n-1}S_N=S_{-N}\underline{Q}{}_{-N+n-1}S_N-S_{-N}\underline{Q}{}_{-N+n}S_N=Q_{n-1}-Q_n\,.
    \end{equs}
    This finishes the proof.
\end{proof}

\begin{example}[White noise (continued)]\label{exmpl_whitenoise_cont}
    In Example~\ref{exmpl_white_noise}, we defined the regularised white noise $
        \underline{ \xi}{}_{n}$ on the
    lattice $ \Lambda_{-n}$.
    By means of the scaling operator $ S_{-n}$ from
    Definition~\ref{def_unit_ops}, we can pushforward $ \underline{
            \xi}{}_{n}$ onto a noise on the unit lattice $ \Lambda^{n}$. Namely, we define
    \begin{equation}\label{e_def_white_noise_unit}
        \begin{aligned}
            \xi_{-n} (x)
            \eqdef L^{(-2+\alpha) n} (S_{-n} \underline{ \xi}{}_{n}) (x)\equiv L^{(-2+\alpha) n} (S_{-n} \underline{ Q}{}_{-n}\xi) (x) \sim \mN ( 0, 1)\,, \qquad x \in
            \Lambda^{n}\,,
        \end{aligned}
    \end{equation}
    which still defines an independent family of random variables. The factor $L^{(-2+\alpha) n}=
        L^{- \frac{d}{2} n}$ is the natural scaling imposed by the regularity of white
    noise, as in the Euclidean case.

    Analogously, for every $N \in \NN$ and $ n \leqslant N $, we have
    \begin{equs}\label{e_def_white_noise_unit2}
        \xi_{-n}(x) = L^{ -(2-\alpha)n} S_{N-n} Q_{N-n} S_{-N}\underline\xi{}_{N}(x) =
        L^{ (2-\alpha)(N- n)} S_{N-n} Q_{N-n} \xi_{-N}(x)\,, \quad x \in \Lambda^{n}\,,\textcolor{white}{blabl}
    \end{equs}
    where the first identity is a consequence of
    \begin{equs}
        L^{ -(2-\alpha) n}
        S_{N-n}Q_{N-n} {S_{-N}} \underline\xi{}_{N}
        &=
        L^{ -(2-\alpha) n}
        S_{-n}\underline{Q}{}^{(N)}_{-n}\underline{Q}{}_{-N}\xi\\
        &= L^{ -(2-\alpha) n}
        S_{-n}\underline{Q}{}_{-n}\xi
        =
        L^{-(2-\alpha) n}
        S_{-n}\underline{\xi}{}_{n}
        =
        \xi_{-n}\,.
    \end{equs}
    Here we used the expression of $ Q^{(N)}_{n}$ from Definition~\ref{def_unit_ops}, \eqref{eq:relPandQunderlined}, and the
    definition of $ \underline{ \xi}$ in \eqref{eq_def_reg_noise}.
\end{example}

We close this section with two useful identities.

\begin{lemma}\label{lem:Qtwice}
    For every $k,n \in\NN$, $k+n\leqslant N$, we have that
    \begin{enumerate}
        \item[(a)] $S_{k+n}Q^{(N)}_{k+n}=S_{k}Q^{(N-n)}_kS_{n}Q^{(N)}_{n}\;$,

        \item[(b)] $ S_{k+n}P^{(N)}_{k+n}=S_{k}P^{(N-n)}_kS_{n}Q^{(N)}_{n}\;$.
    \end{enumerate}
\end{lemma}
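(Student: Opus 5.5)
The plan is to reduce both identities to statements about the underlined operators on the fine lattice, using Definition~\ref{def_unit_ops} together with the composition rule \eqref{eq:relPandQunderlined}. Two elementary properties of the scaling operators will be used throughout. First, they form a group: $S_aS_b=S_{a+b}$ and $S_0=\mathrm{id}$, which follows directly from $S_kf(\bigcdot)=f(L^k\bigcdot)$. Second, $\underline{Q}{}^{(N)}_{-m}$ depends only on the squares $\bigsquare_{-m}(x)$ of $\TT^d$ and on the fine lattice $\Lambda_{-N}$. Hence its restriction to functions already constant on the squares of side $L^{-(N-n)}$ coincides with $\underline{Q}{}^{(N-n)}_{-m}$ acting on $\mC(\Lambda_{-(N-n)})$, whenever $m\leqslant N-n$.

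For (a), I unfold both sides. The left-hand side is $S_{k+n}S_{-N}\underline{Q}{}^{(N)}_{-N+k+n}S_N$. On the right-hand side, $S_nQ^{(N)}_n=S_{n-N}\underline{Q}{}^{(N)}_{-N+n}S_N$, which maps $\mC(\Lambda^N)$ to $\mC(\Lambda^{N-n})$. Then $S_kQ^{(N-n)}_k=S_{k-(N-n)}\underline{Q}{}^{(N-n)}_{-(N-n)+k}S_{N-n}$. In the composition the inner scalings telescope, $S_{N-n}S_{n-N}=\mathrm{id}$, which leaves
\begin{equs}
S_{k+n-N}\,\underline{Q}{}^{(N-n)}_{-N+n+k}\,\underline{Q}{}^{(N)}_{-N+n}\,S_N\,.
\end{equs}
The output of $\underline{Q}{}^{(N)}_{-N+n}$ lives on $\Lambda_{-(N-n)}$. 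Viewing it as a function on $\Lambda_{-N}$ that is constant on the corresponding squares, the restriction observation replaces $\underline{Q}{}^{(N-n)}_{-N+n+k}$ by $\underline{Q}{}^{(N)}_{-N+n+k}$. Then \eqref{eq:relPandQunderlined} gives $\underline{Q}{}_{-(N-n-k)}\underline{Q}{}_{-(N-n)}=\underline{Q}{}_{-(N-n-k)}$, since $N-n-k\leqslant N-n$. What remains is $S_{k+n-N}\underline{Q}{}^{(N)}_{-N+k+n}S_N$, which equals the left-hand side.

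For (b), I use $P_m=Q_{m-1}-Q_m$ from the preceding lemma and linearity. By (a) with $k$ replaced by $k-1$ and by $k$,
\begin{equs}
S_{k}P^{(N-n)}_kS_nQ^{(N)}_n = S_1\bigl(S_{k-1}Q^{(N-n)}_{k-1}S_nQ_n^{(N)}\bigr)-S_kQ^{(N-n)}_kS_nQ^{(N)}_n = S_1S_{k-1+n}Q^{(N)}_{k-1+n}-S_{k+n}Q^{(N)}_{k+n}\,,
\end{equs}
and this equals $S_{k+n}(Q_{k+n-1}-Q_{k+n})=S_{k+n}P^{(N)}_{k+n}$. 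The case $k=0$ is excluded, or trivially true, because $\underline{Q}{}_n\equiv0$ for $n>0$ under the paper's conventions. Applying (a) at $k-1=0$ is fine since $Q_0$ is the identity.

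The main obstacle is the bookkeeping rather than any real analysis. I need to check that the domain identifications are consistent, so that $S_{N-n}$ undoes $S_{n-N}$ on the right lattice. I also need to justify that the restricted averaging operator $\underline{Q}{}^{(N-n)}$ agrees with $\underline{Q}{}^{(N)}$ on coarse-grained functions. I would verify the latter directly from the sum formula: averaging over $L^{(N-m)d}$ fine points that are constant on blocks of $L^{nd}$ points equals averaging over the $L^{(N-n-m)d}$ block values.
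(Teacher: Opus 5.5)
Your argument is correct, but it is organised differently from the paper's. The paper stays on the unit lattices. It proves only the one-step identity $S_nQ_n=S_1Q_1S_{n-1}Q_{n-1}$, by writing both sides with the block-average formula \eqref{eq_Qn_BBS}: the sum over $\bigsquare_n(x)$ is split into the $L^d$ sub-squares of side $L^{n-1}$. It then obtains (a) by induction and calls (b) immediate. You instead conjugate back to the fine lattice and let the scalings telescope via $S_aS_b=S_{a+b}$. This reduces (a), for all $k,n$ at once, to the semigroup property \eqref{eq:relPandQunderlined}. The only extra input is the compatibility $\underline{Q}{}^{(N-n)}_{-m}\,\underline{Q}{}^{(N)}_{-(N-n)}=\underline{Q}{}^{(N)}_{-m}$ for $m\leqslant N-n$, which you correctly identify and check from the sum formula.

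Both proofs rest on the same fact: the hierarchical squares are nested, so the average over a large block equals the average of the averages over its sub-blocks. Your route avoids the induction and its bookkeeping of cutoff superscripts. It also makes transparent that the lemma is just the semigroup property of the torus averaging operators, transported by scaling. The paper's route uses only the explicit unit-lattice formula and gives a concrete combinatorial picture on $\Lambda^N$, at the price of an induction. Your derivation of (b) from $P_m=Q_{m-1}-Q_m$ together with (a) at $k-1$ and at $k$ spells out exactly what the paper leaves implicit.

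One small correction to a side remark: (b) should simply be read for $k\geqslant 1$; it is not ``trivially true'' at $k=0$. With the convention $\underline{Q}{}^{(N-n)}_{-m}\equiv 0$ for $m>N-n$, one gets $P^{(N-n)}_0=-\mathrm{id}$. The two sides of (b) then differ by $S_nQ^{(N)}_{n-1}$, which is nonzero in general. The convention $\underline{Q}{}_n\equiv0$ for $n>0$ that you cite is not the relevant one here.
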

    Before we turn to the proof of this lemma, we first verify that both sides are compatible. Indeed, we have
    \begin{equs}
        \Lambda^N  \overset{Q_{n}}{\longrightarrow} \Lambda^N_{n}
        \overset{S_{n}}{\longrightarrow}\Lambda^{N-n}
        \overset{Q_{k}}{\longrightarrow}\Lambda^{N-n}_k \overset{S_{k}}{\longrightarrow}\Lambda^{N-(k+n)}\,,
    \end{equs}
    so that the right-hand side lies in $\mC (\Lambda^{N-(k+n)})$ which is the image of
    $S_{k+n}Q_{k+n}$. 
   Notice also, that Lemma~\ref{lem:Qtwice} (a) and (b)
   imply $Q_1S_{n-1}P_n=Q_1P_1S_{n-1}Q_{n-1}=0$, which justifies that the image of $
   P_{n}$ lies inside $\ker(Q_1S_{n-1})$.

  \begin{proof}[Proof of Lemma~\ref{lem:Qtwice}]
  We establish that $S_n Q_n=S_1Q_1S_{n-1}Q_{n-1}$, the first claim then follows by
  induction, and the second claim is immediate from the first one. We compute for $x\in \Lambda^N_n$
    \begin{equs}
        S_{-n+1}Q_1S_{n-1}Q_{n-1}g(x)=  L^{-dn}\sum_{y\in\bigsquare_1(L^{-n+1}x)\cap\Lambda^N}
        \sum_{z \in \bigsquare_{n-1}( L^{n-1} y)\cap\Lambda^N} g(z)=Q_ng(x)\,.
    \end{equs}
Indeed, as $y$ ranges over the square of unit mesh size and
side length $L$ centred at $L^{-n+1}x$, the rescaled points $L^{n-1}y$ range
over the square of mesh size $L^{\,n-1}$ and side length $L^{n}$ centred at $x$. 
Summing over all $z$ in the unit–mesh square of side length $L^{n-1}$ centred at $L^{n-1}y$ therefore reproduces the sum over $\bigsquare_n(x)$.
\end{proof}

\begin{remark}\label{rem:unitlattices}

We want to motivate the transition from the coarse lattices $\Lambda_{-n} \subset \TT^{d}$ to the
unit lattices $\Lambda_n$. 
Working on unit lattices guarantees that, after each RG step,
one returns to a lattice with the same fixed spacing. This renders the RG transformation homogeneous,
in the sense that the induced dynamical system is independent of explicit
occurrences of the scale~$n$.
In particular, the regularity properties of the random terms will be contained in the
spectrum of the linearisation of the RG. For instance, considering the evolution of the
noise, we see that the trajectory $(\xi_{-n})_{n\in\NN}$ of white noises on the unit lattices satisfies the recursion 
    \begin{equs}
        \xi_{-n+1}=L^{2-\alpha}S_1Q_1\xi_{-n}\,.
    \end{equs}
    Of course, this recursion is linear, and the noise blows up like $L^{2-\alpha}$ at each step, which is consistent with the fact that the regularity of the noise is $-2+\alpha-\kappa$. 
    In contrast, the trajectory $(
\underline\xi{}_{-n})_{n\in\NN}$ of white noises on the coarse lattices satisfies the
recursion
\begin{equs}
        \underline\xi{}_{n-1}=S_nQ_1S_{-n}\underline\xi{}_{n}\,.
    \end{equs}
    Here we see that the blow up $L^{2-\alpha}$ is not explicit, but hidden inside the
    fluctuations of the noise, and that the flow still depends explicitly on $n$ through the operators $S_n$ and $S_{-n}$.
    This is normal: One compares quantities that should not be compared, since they do not live on the same lattice. 
    For this reason, we will always work on the unit lattice. 
\end{remark}

\subsection{The hierarchical Laplace operator}

Finally, we introduce the hierarchical Laplace operator, which is the central
simplification in the SPDE~\eqref{e_anderson} and enables a more tractable
renormalisation analysis.

\begin{definition}[Hierarchical Laplace operator]
    The hierarchical Laplace operator $ \underline{\Delta}_{H, -N}$ on $  \mC
        (\Lambda_{-N}) $ is defined as
    \begin{equation*}
        \begin{aligned}
            - \underline{\Delta}_{H, -N}
            \eqdef
            \sum_{n =1}^{N} L^{2n} \underline{P}{}_{-n}  \,.
        \end{aligned}
    \end{equation*}
    Moreover, we define the hierarchical Laplacian $\Delta_{H,N}$ on $ \mC
        (\Lambda^{N})$ by
    rescaling of $\underline\Delta_{H,-N}$
    \begin{equs}\label{eq:defLaplacianonlambdaN}
        \Delta_{H,N}\eqdef L^{-2N}  S_{-N} \underline\Delta_{H,-N} S_N      \,.
    \end{equs}
    Whenever the value of $N$ is clear from the context, we omit the subscript, and write $\Delta_{H}$ instead of $\Delta_{H,N}$.
\end{definition}
The additional factor $ L^{-2N}$ in \eqref{eq:defLaplacianonlambdaN} arises from imposing the same scaling behaviour as that of the classical Laplace operator.
Note that when acting on $g\in  \mC (\Lambda^{N})$, the definition \eqref{eq:defLaplacianonlambdaN} reads
\begin{equs}
\Delta_{H,N}g (\bigcdot ) = L^{-2N}   \underline\Delta_{H,-N}\big(g(L^N\,\bigcdot)\big) (L^{-N}\,\bigcdot) \,.
\end{equs}
Furthermore, by explicit calculation, 
we may express $ \Delta_{H, N}$ in terms of $(P_n)_{1\leqslant n\leqslant N}$:
    \begin{equs}\label{eq_hierLaplace_BBS}
        - \Delta_{H,N}=L^{-2N}\sum_{n=1}^N L^{2n}S_{-N} \underline{P}_{-n}S_N=\sum_{n=1}^N L^{2(n-N)}P_{N-n+1}=\sum_{n=1}^N L^{-2(n-1)}P_n\,.
    \end{equs}
This representation agrees with \cite[Definition~4.1.7]{rgBook}. 

\begin{remark}
    The definition of the hierarchical Laplacian is best understood in light of~\eqref{eq:decompositionf}, which shows that the operators $\underline{P}{}_{-n}$ play the role of sharp Littlewood--Paley projections at scale $L^{-n}$.
    At a spatial scale $\ell$, the hierarchical Laplacian therefore behaves like multiplication by $\ell^{-2}$, in analogy with the usual Laplace operator.

    The operator $\underline{\Delta}_{H,-N}$ is called \emph{hierarchical} because its kernel depends on the hierarchical distance $L^{-h(x,y)}$ rather than on the Euclidean distance, where $h(x,y)$ is the largest $n\in\{0,\dots,N-1\}$ such that $x,y\in\bigsquare_{-n}(z)$ for some $z\in\Lambda_{-n}$ if $x\neq y$, with the convention $h(x,x)=\infty$.
\end{remark}

Finally, also the inverted hierarchical Laplace operator admits an explicit representation,
which we state without proof, as it may be verified by direct calculation.
\begin{lemma}\label{lem:inverse}
    Let $ g\in  \mC (\Lambda^{N})$ such that
    $ Q_{N} g =0$ (i.e. $g$ is orthogonal to constants), then
    \begin{equation}\label{e_laplace_inverse}
        \begin{aligned}
            ( - \Delta_{H,N})^{-1} g =
            \sum_{n =1}^{N} L^{2(n-1)} P_{n} g
            \,.
        \end{aligned}
    \end{equation}
    In particular, we have the useful identity
    $P_1(-\Delta_H)^{-1}=(-\Delta_H)^{-1}P_1=P_1$.
\end{lemma}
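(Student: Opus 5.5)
The plan is to verify \eqref{e_laplace_inverse} by applying $-\Delta_{H,N}$ to the proposed right-hand side, using the representation \eqref{eq_hierLaplace_BBS}, $-\Delta_{H,N}=\sum_{m=1}^N L^{-2(m-1)}P_m$, together with the orthogonality of the fluctuation projections. Transporting \eqref{eq:relPandQunderlined} to the unit lattice via $P_n=S_{-N}\underline{P}{}_{-N+n-1}S_N$ and $S_NS_{-N}=\mathrm{id}$ gives $P_mP_n=\delta_{m,n}P_n$ for $1\leqslant m,n\leqslant N$. Similarly, $Q_NP_n=0$ for $n\leqslant N$, either from $Q_N=S_{-N}\underline{Q}{}_0S_N$ and $\underline{Q}{}_0\underline{P}{}_{-k}=\underline Q{}_0-\underline Q{}_0=0$ for $k\geqslant1$, or from Lemma~\ref{lem:Qtwice}.

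With these relations, define $G\eqdef\sum_{n=1}^N L^{2(n-1)}P_n g$ and compute
\begin{equs}
-\Delta_{H,N}G=\sum_{m,n=1}^N L^{-2(m-1)}L^{2(n-1)}P_mP_ng=\sum_{n=1}^N P_ng .
\end{equs}
The decomposition \eqref{eq:decompositionf}, rescaled to $\Lambda^N$, reads $g=Q_Ng+\sum_{n=1}^NP_ng$. This can also be seen directly, since $P_n=Q_{n-1}-Q_n$ telescopes and $Q_0=\mathrm{id}$. Hence $-\Delta_{H,N}G=g-Q_Ng=g$ by the hypothesis $Q_Ng=0$.

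It remains to argue that $G$ is the inverse in a well-defined sense. The operator $-\Delta_{H,N}$ annihilates constants, since $P_mQ_N=0$, so it is invertible exactly on $\ker Q_N$. On this space it is injective, because it acts as multiplication by the positive number $L^{-2(m-1)}$ on each range of $P_m$, and these ranges together with constants span $\mC(\Lambda^N)$. Since also $Q_NG=0$, $G$ is the unique preimage in $\ker Q_N$, which justifies writing $(-\Delta_{H,N})^{-1}g=G$.

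For the identity $P_1(-\Delta_H)^{-1}=(-\Delta_H)^{-1}P_1=P_1$ (on $\ker Q_N$), apply $P_1$ to $G$. Orthogonality kills every term except $n=1$, whose prefactor is $L^0=1$, so $P_1G=P_1g$. For the other order, $P_1g\in\ker Q_N$, and the formula gives $\sum_nL^{2(n-1)}P_nP_1g=P_1g$.

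There is no real obstacle here. The only point requiring care is checking the index bookkeeping in the orthogonality $P_mP_n=\delta_{mn}P_n$ after rescaling, namely that the shift $-N+n-1$ matches for both operators, and that $S_NS_{-N}$ is the identity on the intermediate lattice.
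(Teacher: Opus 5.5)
Your proof is correct and is the direct calculation the paper has in mind. The paper states this lemma without proof, saying only that it can be checked directly, and your argument supplies exactly that check: $P_mP_n=\delta_{mn}P_n$ and $Q_NP_n=0$ (both transported from the coarse lattice), together with the telescoping identity $\sum_{n=1}^N P_n=1-Q_N$. Your extra uniqueness argument on $\ker Q_N$, and the restriction of $P_1(-\Delta_H)^{-1}=P_1$ to that subspace, are appropriate clarifications that the paper leaves implicit.
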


\subsection{Besov spaces}

The final technical ingredient in our framework is a notion of regularity. To this end,
we introduce hierarchical Besov spaces. We first formulate them on the torus $
\TT^{d}$, where they arise in a natural manner, and subsequently transfer the definition to functions defined on the unit lattice.

\begin{definition}[Hierarchical Besov norms]\label{def_besov}
    For any $\beta\in\RR$, we define the hierarchical Hölder--Besov space $\CB
    C^\beta\equiv\CB C^\beta(\TT^d)$ on $\TT^d$ as the completion of smooth functions
    under the norm 
    \begin{equation}\label{eq_def_besov}
        \begin{aligned}
            \| w\|_{\CB C^\beta } \eqdef
            |\underline{Q}{}_{0} w|+
            \sup_{n \in \NN} L^{\beta n}  \sup_{x\in \TT^{d}}
            |\underline{P}{}_{-n} w (x)|\,.
        \end{aligned}
    \end{equation}
\end{definition}
Indeed, recall that $\underline{P}{}_{-n}$ is the analogue of a sharp
Littlewood--Paley block at scale $L^{-n}$. In view of this, \eqref{eq_def_besov} strongly
resembles the definition of Besov spaces in the Euclidean space.
Moreover, as in the Euclidean case, whenever $\beta<0$, replacing
$\underline{P}{}_{-n}$ by $\underline{Q}{}_{-n}$ yields an equivalent
norm. Indeed, using $\underline Q{}_{-n}=\underline
    Q{}_{0}+\sum_{k=1}^n\underline P{}_{-k}$ we have
\begin{equs}
    L^{\beta n}|\underline Q{}_{-n}w(x)| &\leqslant  L^{\beta
            n}|\underline Q{}_{0}w|+\sum_{k=1}^nL^{\beta (n-k+k)} |\underline P{}_{-k}w(x)|\\
    & \leqslant \sum_{k=0}^nL^{\beta (n-k)}\| w\|_{\CB C^\beta }\leqslant (1-L^\beta)^{-1}\| w\|_{\CB C^\beta }\,.
\end{equs}
The following lemma allows us to control the Besov norm \eqref{eq_def_besov} on a coarse
lattice by that of its rescaled version on the unit lattice, where our estimates are carried out.
\begin{lemma}\label{lem:HBforlatticefunctions}
    For any $\beta\in\RR$ and $w_N\in  \mC (\Lambda_{-N})$, the Hölder--Besov norm can be more conveniently recast as
    \begin{equs}
        \| w_N\|_{\CB C^\beta } =
        |\underline{Q}{}_{0} w_N|+
        \max_{1\leqslant n\leqslant N} L^{\beta n}  \sup_{x\in \Lambda^{n}}
        |P_1S_{N-n}Q_{N-n} S_{-N}w_N (x)|\,.
    \end{equs}
\end{lemma}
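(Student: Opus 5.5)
The identity follows from two facts: for $w_N$ on $\Lambda_{-N}$, the blocks $\underline{P}{}_{-n}w_N$ vanish for $n>N$; and for $n\leqslant N$, $\underline{P}{}_{-n}w_N$ is a rescaling of $P_1S_{N-n}Q_{N-n}S_{-N}w_N$. The plan is first to reduce the supremum over $n\in\NN$ to a maximum over $1\leqslant n\leqslant N$, then to identify each remaining block through the scaling operators and Lemma~\ref{lem:Qtwice}.

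\emph{Step 1 (truncation).} Here $w_N$ is understood as the piecewise constant function equal to $w_N(x)$ on $\bigsquare_{-N}(x)$. For interpolations one would instead have to interpret $\underline{P}{}_{-n}$ through its restriction $\underline{P}{}^{(N)}_{-n}$ and the convention $\underline{P}{}^{(N)}_{-n}=0$ for $n>N$. For $n>N$, averaging a piecewise constant function over a finer cell reproduces its value, so $\underline{Q}{}_{-n}w_N=\underline{Q}{}_{-n+1}w_N$. Hence $\underline{P}{}_{-n}w_N=0$, and the supremum over $n\in\NN$ becomes a maximum over $n\leqslant N$. In addition, $\underline{P}{}_{-n}w_N$ is constant on each $\bigsquare_{-n}(x)$, so $\sup_{x\in\TT^d}$ may be replaced by $\sup_{x\in\Lambda_{-n}}$.

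\emph{Step 2 (rescaling).} Fix $1\leqslant n\leqslant N$. From Definition~\ref{def_unit_ops}(b) with index $N-n+1$,
\begin{equs}
S_{N-n}P^{(N)}_{N-n+1}S_{-N}=S_{N-n}S_{-N}\underline{P}{}^{(N)}_{-n}S_NS_{-N}=S_{-n}\underline{P}{}_{-n}\,,
\end{equs}
using $S_aS_b=S_{a+b}$ and $S_NS_{-N}=\mathrm{id}$. Next, Lemma~\ref{lem:Qtwice}(b) with $k=1$ and shift $N-n$, valid since $1+(N-n)\leqslant N$, gives
\begin{equs}
S_{N-n+1}P^{(N)}_{N-n+1}=S_1P^{(n)}_1S_{N-n}Q^{(N)}_{N-n}\,.
\end{equs}
Applying $S_{-1}$ on the left, and using that $S_{-1}S_1$ is the identity, yields
\begin{equs}
S_{N-n}P^{(N)}_{N-n+1}=P_1S_{N-n}Q_{N-n}\,.
\end{equs}
Combining both displays,
\begin{equs}
P_1S_{N-n}Q_{N-n}S_{-N}w_N=S_{-n}\underline{P}{}_{-n}w_N\,.
\end{equs}
Since $S_{-n}$ is the bijective relabelling $\Lambda_{-n}\to\Lambda^{n}$, $y\mapsto L^{n}y$, the suprema over $x\in\Lambda^{n}$ and over $x\in\Lambda_{-n}$ coincide. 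Together with Step 1 this proves the claimed formula.

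\emph{Main obstacle.} The difficulty is bookkeeping rather than analysis. One must check that the superscripts $(N)$ and $(n)$ and the domains $\Lambda^{N-n}$, $\Lambda^{N-n+1}$ match up in Lemma~\ref{lem:Qtwice}, and that $P_1$ here means $P_1^{(n)}$ acting on $\mC(\Lambda^{n})$. I would double-check the identity $S_{N-n}P_{N-n+1}=P_1S_{N-n}Q_{N-n}$ directly from the averaging formula~\eqref{eq_Qn_BBS}: both sides compute $Q_{N-n}-Q_{N-n+1}$ of the rescaled function on blocks of size $L^{N-n}$, viewed on the lattice $\Lambda^n$.
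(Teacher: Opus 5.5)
Your proof is correct and follows the paper's argument. The paper likewise discards the blocks with $n>N$ and then uses Definition~\ref{def_unit_ops}(b) with Lemma~\ref{lem:Qtwice}(b) to write $\underline{P}{}_{-n}=S_NP_{N-n+1}S_{-N}=S_nP_1S_{N-n}Q_{N-n}S_{-N}$, which is your identity with $S_{-n}$ moved to the other side; your point that the vanishing for $n>N$ needs the piecewise-constant embedding (or an explicit convention), not linear interpolation, usefully clarifies the paper's terse ``by definition''.
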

\begin{proof}
    First, by definition, we have $\underline{P}{}_{-n}w_N=0 $ for $n>N$. Then, using
    Lemma~\ref{lem:Qtwice}, for $n\leqslant N$ we have $ 
        \underline{P}{}_{-n}= S_N P_{N-n+1}S_{-N}=S_{n}P_1 S_{N-n}Q_{N-n}S_{-N}$.    
\end{proof}

As an instructive example and to gain familiarity with the notation, let us
show that
white noise on $ \TT^{d}$ indeed
lies in the Besov spaces $ \CB C^{- d/2 - \kappa} $, for every $ \kappa>0 $, almost
surely.

\begin{lemma}\label{lem_white_noise}
    Let $ ({ \xi}_{-n})_{n \in \NN}$ be the family of regularised white
    noises in
    \eqref{e_def_white_noise_unit}. For every $ \kappa>0 $ and $ p \in
        (1, \infty)$, it holds
    \begin{equation*}
        \begin{aligned}
		\EE\big[  \| { \xi}\|_{\CB C^{- 2+\alpha - \kappa} (
            \TT^{d})}^{p}\big]
	    =
            \EE\Big[\Big( \sup_{ n \in \NN}L^{-\kappa n}\sup_{x\in\Lambda^n} |\xi_{-n}(x)|  \Big)^{p}\Big] \lesssim p^{p/2}\,.
        \end{aligned}
    \end{equation*}
\end{lemma}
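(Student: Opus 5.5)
The first step is to establish the equality in the statement, and only then bound the moments. Recall that $\underline{Q}{}_{0}\xi$ is the first term of the Besov norm in \eqref{eq_def_besov}, and that for $\beta=-2+\alpha-\kappa<0$ we may replace $\underline{P}{}_{-n}$ by $\underline{Q}{}_{-n}$ and obtain an equivalent norm (see the computation after Definition~\ref{def_besov}). I will read the first term $|\underline{Q}{}_0\xi|=|\xi_0|$ as the $n=0$ contribution to the supremum, and I expect the displayed equality to be understood up to this norm equivalence. By \eqref{e_def_white_noise_unit},
\begin{equs}
L^{\beta n}\sup_{x\in\Lambda_{-n}}|\underline{Q}{}_{-n}\xi(x)| = L^{-\kappa n}L^{-(2-\alpha)n}\sup_{x\in\Lambda^n}|L^{(2-\alpha)n}\xi_{-n}(x)| = L^{-\kappa n}\sup_{x\in\Lambda^n}|\xi_{-n}(x)|\,.
\end{equs}
Since $\underline{Q}{}_{-n}\xi$ is constant on each square $\bigsquare_{-n}(x)$, the supremum over $\TT^d$ reduces to the supremum over the lattice. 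It therefore suffices to bound the $p$-th moment of $M\eqdef\sup_{n\geqslant 0}L^{-\kappa n}\sup_{x\in\Lambda^n}|\xi_{-n}(x)|$.

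For the moment bound, I will use a union bound combined with Gaussian tails. Each $\xi_{-n}(x)$ is $\mN(0,1)$, and $|\Lambda^n|=L^{dn}$. For $t\geqslant 1$,
\begin{equs}
\PP(M>t)\leqslant \sum_{n\geqslant0} L^{dn}\,\PP\big(|\mN(0,1)|>tL^{\kappa n}\big)\leqslant 2\sum_{n\geqslant 0}L^{dn}e^{-t^2L^{2\kappa n}/2}\,.
\end{equs}
To control the sum, I will use $L^{2\kappa n}\geqslant 1+2\kappa n\log L$ and $t^2\geqslant t_0^2$. Take $t_0^2$ large enough, depending on $\kappa$, $d$ and $L$, so that $\kappa\,t_0^2\log L\geqslant 2d\log L$. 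Then each term is at most $e^{-t^2/2}L^{-dn}$ for $t\geqslant t_0$, which gives $\PP(M>t)\leqslant C e^{-t^2/2}$ for $t\geqslant t_0$.

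The final step integrates this tail: $\EE[M^p]=\int_0^\infty pt^{p-1}\PP(M>t)\,\rmd t\leqslant t_0^p+Cp\int_0^\infty t^{p-1}e^{-t^2/2}\rmd t\lesssim C^p p^{p/2}$. The implicit constant depends on $\kappa$, $L$ and $d$, but not on $p$ beyond the factor $p^{p/2}$, which is the expected Gaussian moment growth. If one only wants $C^p p^{p/2}$, this matches the claim with $\lesssim$ absorbing a $p$-dependent geometric factor. As an alternative route, I could bound $\EE[M^p]\leqslant\sum_n L^{-\kappa n p}L^{dn}\EE|\mN(0,1)|^p$ directly. This requires $\kappa p>d$, and smaller $p$ can then be handled by Jensen's inequality.

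The main obstacle is not the probability estimate, which is routine. It is making the identity between the two expressions precise: the constant mode $\underline{Q}{}_0\xi$ has to be matched with the $n=0$ term, and replacing $\underline{P}$ by $\underline{Q}$ only holds up to a constant $(1-L^\beta)^{-1}$ and a factor of $2$. I expect to treat the equality as an equivalence of norms, which does not affect the moment bound.
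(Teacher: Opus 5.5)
Your argument is correct, but you take the union over scales and lattice points at a different level than the paper does. The paper works with moments. It bounds $\EE[M^p]\leqslant\sum_{n}L^{-p\kappa n}\sum_{x\in\Lambda^n}\EE|\xi_{-n}(x)|^p$ and uses Gaussian hypercontractivity to get $p^{p/2}\sum_n L^{(d-p\kappa)n}$, which is finite only for $p>d/\kappa$. It then reaches smaller $p$ by comparing $L^p$ norms. This is exactly your ``alternative route''.

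Your main route applies the union bound to tail probabilities instead. This gives $\PP(M>t)\lesssim e^{-t^2/2}$ for $t\geqslant t_0=\sqrt{2d/\kappa}$, so $t_0$ does not actually depend on $L$. You then integrate the tail. Both arguments rest on the same counting: $L^{dn}$ points at scale $n$ against the weight $L^{-\kappa n}$.

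Each route buys something different. Yours handles all $p\in(1,\infty)$ at once. It also directly produces the sub-Gaussian tail that Lemma~\ref{lem_tailprob} is ultimately after. The paper's moment route carries over verbatim to the second-chaos term in Lemma~\ref{lem_stoch}. There the variables are no longer Gaussian, and hypercontractive moment bounds are the natural input.

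Two small remarks. First, your caution about the displayed equality is justified. The paper also simply swaps $\underline P{}_{-n}$ for $\underline Q{}_{-n}$, which is only an equivalence of norms. The direction you need is $\|\xi\|_{\CB C^{\beta}}\leqslant 3\sup_{n\geqslant 0}L^{\beta n}\sup_{x}|\underline Q{}_{-n}\xi(x)|$ for $\beta<0$. It follows at once from $\underline P{}_{-n}=\underline Q{}_{-n}-\underline Q{}_{-n+1}$.

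Second, you undersell your own estimate: no geometric factor $C^p$ needs to be absorbed into $\lesssim$. Indeed $t_0^p\leqslant e^{t_0^2/(2e)}p^{p/2}$ and $p\int_0^\infty t^{p-1}e^{-t^2/2}\,\rmd t=2^{p/2}\Gamma(p/2+1)\lesssim p^{p/2}$. So you obtain $\EE[M^p]\lesssim p^{p/2}$ with a $p$-independent constant, which is the form used in the proof of Lemma~\ref{lem_tailprob}.
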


\begin{proof}
    Let $ \kappa> 0 $.
Recall the definition of $\xi_{-n}$ on the unit lattice $\Lambda^{n}$ in terms of $\xi$,
cf. \eqref{e_def_white_noise_unit}. Since the noise has negative regularity, each occurrence of $\underline P_{-n}$ in \eqref{eq_def_besov} can be replaced by $\underline Q_{-n}$.
    Therefore, 
    \begin{equation*}
        \begin{aligned}
            \| { \xi}{}\|_{\CB C^{- 2+\alpha - \kappa} (
            \TT^{d})}=
            \sup_{n \in \NN } L^{(-2+\alpha- \kappa) n}  \sup_{x\in \TT^d}
            |\underline Q{}_{-n}\xi(x)|=\sup_{n \in \NN } L^{ -\kappa n}  \sup_{x\in \Lambda^n}
            |\xi_{-n}(x)|\,.
        \end{aligned}
    \end{equation*}
    We proceed with a classical Kolmogorov argument.
    First, for every $ p \geqslant 1$, we can crudely estimate both suprema in the previous
    display by sums, which yields
    \begin{equation*}
        \begin{aligned}
            \BF E\Big[
                \Big(
                \sup_{n \in \NN } L^{- \kappa n}  \sup_{x\in \Lambda^{n}}
                |\xi_{-n}(x)|
                \Big)^{p}
                \Big]
            \leqslant
            \sum_{n = 0}^{ \infty}
            L^{- p \kappa n}
            \sum_{x \in \Lambda^{n}}
            \BF E \Big[
            | \xi_{-n}(x) |^{p}
            \Big]\,.
        \end{aligned}
    \end{equation*}
    Next, using hypercontractivity
    of the Gaussian
    variables $ \xi_{-n} (x)$, we see that
    \begin{equation}\label{eq_supp_whitebesov}
        \begin{aligned}
            \EE\Big[
                \Big(
                \sup_{n \in \NN } L^{- \kappa n}  \sup_{x\in \Lambda^{n}}
                |\xi_{-n}(x)|
                \Big)^{p}
                \Big]
            \lesssim {p}^{p/2}
            \sum_{n = 0}^{ \infty}
            L^{- p \kappa n}
            \sum_{x \in \Lambda^{n}}
            \EE [ \xi_{-n}(x)^{2} ]^{ p /2}
            =  {p}^{p/2}
            \sum_{n = 0}^{ \infty}
            L^{(d- p \kappa) n}\,,
        \end{aligned}
    \end{equation}
    where we used $ \EE[ \xi_{-n}(x)^{2}]=1$ in the last identity.
    The right--hand side of \eqref{eq_supp_whitebesov} converges, provided we
    choose $p $ large enough. By Markov's inequality, we extend the moment bound to all
    $p \in (1, \infty) $. 
\end{proof}

\section{The renormalised model on the unit lattice}\label{sec_unit_lattice_v}

We scale the solution $ u_{N}$ to the regularised equation \eqref{e_anderson_mol} to the unit lattice $\Lambda^N$, defining a function $v_{-N}\in\Lambda^N$ by
\begin{equation*}
    \begin{aligned}
        v_{-N} (x) \eqdef L^{ \alpha N}S_{-N}u_N(x)\equiv  L^{ \alpha N} u_{ N} ( L^{-N} x)\,, \qquad x \in
        \Lambda_{N}\,,
    \end{aligned}
\end{equation*}
which now solves the equation
\begin{equation}\label{eq_supp_vN}
    \begin{aligned}
        (- \Delta_{H,N}) v_{-N}  = (L^{-\alpha N} g \xi_{-N} + L^{-2N}
        r_{N})
        v_{-N}+ L^{-(2- \alpha)N}\,.
    \end{aligned}
\end{equation}
Here, $ \xi_{-N}(x)
    \sim \mN (0,1)$ denote the
independent standard normal random variables that were defined in
\eqref{e_def_white_noise_unit} for $x\in\Lambda^N$.
For convenience, we also define
\begin{equation*}
    \begin{aligned}
        \lambda_{-N} \eqdef L^{-\alpha N} g \,, \quad
        \mu_{-N}\eqdef  L^{-2N} r_{N}\,, \quad  \text{ and } \quad \gamma_{-N}\eqdef L^{-(2- \alpha) N}\,,
    \end{aligned}
\end{equation*}
which allows us to rewrite \eqref{eq_supp_vN} as
\begin{equation}\label{eq_vN}
    \begin{aligned}
        (- \Delta_{H}) v_{-N}  = (\lambda_{-N} \xi_{-N} + \mu_{-N})
        v_{-N} + \gamma_{-N}\,.
    \end{aligned}
\end{equation}
The reader may again wonder why we consider $v_{-N}$ 
on the unit lattice instead of studying $u_N$ directly on $\TT^{d}$. 
This choice will become clear when we derive the dynamical system that describes 
how $v_{-N}$ changes across scales. We refer to Remark~\ref{rem_whyunit2} below for
further explanation.\\

Throughout this section, we analyse the SPDE \eqref{eq_supp_vN} by examining its evolution across scales. 
To this end, we introduce coarse-grained observables of $v_{-N}$, 
which allow us to capture its behaviour at progressively larger scales. 
We will conclude the section with the proof of our main result, Theorem~\ref{thm_main}, 
for which the scale analysis of $v_{-N}$ is the key ingredient.

\subsection{The coarse grained solution}

Henceforward, we enforce that $d=2$, or equivalently that $\alpha=1$. Nevertheless, we keep the parameter $\alpha$ explicit, to be able to discuss the effect of the dimension.

The central idea underlying renormalisation group techniques is to exploit
the fact that the degrees of freedom of a system depend on the scale at which it is observed. 
In what follows, we analyse the evolution of the system parameters as the reference scale is varied. 
To this end,
we define for $n\in\{0,\dots,N\}$ the average of $v_{-N}$ at scale $L^{-n}$,
which we denote by
\begin{equs}\label{eq:defofv-n}
    v_{-n}^{(N)} \eqdef L^{-\alpha(N-n)} S_{N-n}Q_{N-n} v_{-N} \,.
\end{equs}
Notice that $ v_{-n}^{(N)}$ is defined on the unit lattice of size $L^n$.
In particular,  $v_{-N}^{(N)} = v_{-N}$ and $v^{(N)}_0=\underline{Q}{}_0u_N$. 
We will again drop the superscript
and leave the dependency on the ultraviolet cutoff $N$ implicit.
The function $
    v_{-n}$ should be thought of as a coarse grained version of $ v_{-N}$,
describing the solution $ u_{N}$ of \eqref{e_anderson_mol} at scale $ L^{-n}$.
Indeed, we have
\begin{equation}\label{eq_v2u}
    \begin{aligned}
        v_{-n}^{(N)} \equiv L^{\alpha
                n}S_{N-n}Q_{N-n}S_{-N}u_N\,.
    \end{aligned}
\end{equation}
In particular, control on the trajectory $(v^{(N)}_{-n})_{0\leqslant n\leqslant N}$
will be sufficient to control  $u_N$.
To establish said control, we will rely on the fact that $ v_{-n}$ solves again a linear SPDE, the \emph{effective
    equation}, which is now driven by an \emph{effective force} at scale $n$.



    \subsection{The effective force}\label{sec_effforce}

The \emph{bare force} at scale $L^{-N}$ is given in
terms of the right--hand side of \eqref{eq_vN}, namely
\begin{equation}\label{eq_bare_force}
    \begin{aligned}
        F_{-N}[v_{-N}]\eqdef  (\lambda_{-N} \xi_{-N} + \mu_{-N})
        v_{-N} + \gamma_{-N}\,.
    \end{aligned}
\end{equation}
This allows us rewrite \eqref{eq_vN} (in integrated form) as
\begin{equs}
    v_{-N}=(-\Delta_{H,N}^{-1})F_{-N}[v_{-N}] \,.
\end{equs}
Likewise, we can introduce a family $(F_{-n})_{n\leqslant N}$ of \emph{effective
    forces} at each scale $L^{-n}$
\begin{equs}\label{eq:effectiveforce}
    F_{-n}[v_{-n}] \eqdef L^{(2-\alpha)(N-n)} S_{N-n}Q_{N-n}F_{-N}[v_{-N}]\,,
\end{equs}
such that the coarse grained solution $ v_{-n}$ satisfies the \emph{effective
    equation}
\begin{equation*}
    \begin{aligned}
        v_{-n}
        =(-\Delta_{H,n}^{-1})F_{-n}[v_{-n}]\,.
    \end{aligned}
\end{equation*}
Indeed, this is a consequence of the following chain of identities
\begin{equs}
    v_{-n}&=L^{-\alpha(N-n)}S_{N-n}Q_{N-n}v_{-N}=L^{-\alpha(N-n)}S_{N-n}Q_{N-n}(-\Delta_{H,N}^{-1})F_{-N}[v_{-N}]
    \\
    &=L^{(2-\alpha)(N-n)}(-\Delta_{H,n}^{-1})S_{N-n}Q_{N-n}F_{-N}[v_{-N}]=(-\Delta_{H,n}^{-1})F_{-n}[v_{-n}]\,,
\end{equs}
where in the third equality we used
\begin{equs}\label{eq:commutator}
\begin{aligned}
    S_{N-n}Q_{N-n}(-\Delta_{H,N}^{-1})&=
    \sum_{k=N-n+1}^NL^{2(k-1)}S_{N-n} P_k=\sum_{k=N-n+1}^NL^{2(k-1)}P_{k-N+n}S_{N-n}Q_{N-n}\\
				      &=L^{2(N-n)}\sum_{k=1}^nL^{2(k-1)}P_{k}S_{N-n}Q_{N-n}=
				   L^{2(N-n)}   (-\Delta_{H,n}^{-1})S_{N-n}Q_{N-n}\,,
                      \end{aligned}
\end{equs}
with $ S_{N-n}P_{k}^{(N)}  = P_{k
	-(N-n)}^{(n)} S_{N-n}  Q_{N-n}^{(N)}$, see Lemma~\ref{lem:Qtwice}~(b),
	being used in the second equality. Note for future reference that \eqref{eq:commutator} is equivalent to 
    \begin{equs}\label{eq:commutator1}
        \underline Q{}_{-n}(-\underline{\Delta}_{H,-N}^{-1})=(-\underline{\Delta}_{H,-n}^{-1})  \underline Q{}_{-n}\,.
    \end{equs}

The aim of the renormalisation group is to establish a closed expression for $F_{-n}$ in terms of $F_{-N}$.
In turn, this will allow the construction of $v_{-n}$ at any scale, and therefore the solution $u_N$ itself.
The difficulty with \eqref{eq:effectiveforce} lies in the fact that $F_{-N}$ is
still evaluated at $v_{-N}$,
while we only want it to depend on $ v_{-n}$.
To remove this dependence, we introduce the \emph{fluctuation field}
\begin{equation}\label{eq_def_Gamma}
	\begin{aligned}
		 \CB f_{-n}\eqdef
		 \Gamma_{N-n}F_{-N}[v_{-N}]\,, \quad \text{where } \ 
		 \Gamma_{N-n}
    \eqdef (1-Q_{N-n})(\Delta_{H,N})^{-1}
    \equiv \sum_{k =1}^{N-n} L^{2(k-1)} P_{k} \,.
	\end{aligned}
\end{equation}
The operator $ \Gamma_{N-n}$ is referred to as the fluctuation propagator.
We can then reexpress $v_{-N}$ in terms of $ v_{-n}$ and the
$\CB f_{-n}$, using \eqref{eq:decompositionf}: 
\begin{equs}\label{eq_fluctdecomp}
	v_{-N}=L^{\alpha(N-n)}S_{-N+n}v_{-n}+ \CB f_{-n}\,.
\end{equs}
Notably, the fluctuation field $ \CB f_{-n}$ can be obtained by solving a fixed
point problem driven by $F_{-N}$ only. Indeed, inserting \eqref{eq_fluctdecomp} into
\eqref{eq:effectiveforce} and \eqref{eq_def_Gamma}, we have
\begin{empheq}[left=\empheqlbrace]{alignat=2}
    \begin{aligned}\label{eq:sys}
        F_{-n}[\bigcdot]     & =L^{(2-\alpha)(N-n)}S_{N-n}Q_{N-n}F_{-N}\big[L^{\alpha(N-n)}S_{-N+n}\bigcdot+ \CB f_{-n}[\bigcdot]\big]\,, \\
        \CB f_{-n}[\bigcdot] & =\Gamma_{N-n}F_{-N}\big[L^{\alpha(N-n)}S_{-N+n}\bigcdot+ \CB f_{-n}[\bigcdot]\big]\,.
    \end{aligned}
\end{empheq}

Let us return to the explicit expression of $ F_{-N}$ in \eqref{eq_bare_force}, and pretend for
simplicity that  $$F_{-N}[v]=\lambda_{-N}\xi_{-N} v\,.$$
This is at least heuristically true, because
the terms $\mu_{-N}$ and $\gamma_{-N}$ play no essential role.
Therefore, with this simplification in \eqref{eq:sys}, we can solve explicitly for $ \CB
f_n[\bigcdot]$ in
\begin{equs}
    \big(1-\lambda_{-N}\Gamma_{N-n}(\xi_{-N}\,\bigcdot)\big)  \big[\CB
    f_{-n}\big][v_{-n}]&=L^{\alpha(N-n)}\lambda_{-N} (\Gamma_{N-n}\xi_{-N} ) S_{-N+n}v_{-n}\,,
\end{equs}
where we used that $v_{-n}$ is already averaged at scale $L^{-n}$,
and can be factorised from $\Gamma_{N-n}$.

Introducing $\lambda_{-n}\eqdef L^{\alpha(N-n)}\lambda_{-N}$, we can thus (formally)
write
\begin{equs}
    \CB f_{-n}[v_{-n}]&=\sum_{k=1}^\infty
    L^{-\alpha(N-n)(k-1)}\lambda^k_{-n}\big(\Gamma_{N-n}(\xi_{-N}\,\bigcdot)\big)^{\circ
    k} {[S_{-N+n}v_{-n}]}\,,
\end{equs}
such that the corresponding effective force \eqref{eq:effectiveforce} is given in terms of
\begin{equs}\label{eq:heuristic}
    F_{-n}[v_{-n}]&=\sum_{k=1}^\infty
    L^{(2-k\alpha)(N-n)}\lambda^k_{-n}S_{N-n}Q_{N-n}\big(\xi_{-N}\big(\Gamma_{N-n}(\xi_{-N}\,\bigcdot)\big)^{\circ
    k-1}\big){[\BF{1}]}\,  v_{-n}\,.
\end{equs}
The interest in \eqref{eq:heuristic} stems from the fact that it provides a formal
expansion of the effective force into directions that expand or contract at different
rates as $n$ decreases.
In fact, if $\alpha>0$ (i.e. $d<4$), then only a finite number of
directions expand. These contributions are called \emph{deterministically relevant}. In
$d=2$, the only relevant contribution in \eqref{eq:heuristic} beyond the noise is the term of order $\lambda_{-n}^2$. This is why, in the sequel, we will make the assumption that the effective force is of the following form:
\begin{equs}\label{eq:ansaztforce}
    F_{-n}[v_{-n}]=\big(\lambda_{-n}\xi_{-n}+\mu_{-n}+\lambda^2_{-n}\tOne_{\!\!-n}+\lambda^3_{-n}\Psi_{-n}\big)v_{-n}+\gamma_{-n}\,,
\end{equs}
where
$ \tOne_{-n}$ denotes the (centred) part of the $ \lambda_{-n}^{2}$--term in
\eqref{eq:ansaztforce},
and $\Psi_{-n}$ gathers all the \emph{deterministically irrelevant} contributions of order at least $\lambda_{-n}^3$ (or equivalently $L^{(2-3\alpha)(N-n)}$).
Indeed, in Section~\ref{sec_enhanced_noise}, we will carefully disentangle the deterministic
contribution of the inhomogeneous chaos term of order  $ \lambda_{-n}^{2}$ in
\eqref{eq:ansaztforce},
and collect it in $\mu_{-n}$.
In turn, we are only left with the second homogeneous chaos contribution
$\tOne_{\!\!-n}$.
Such an operation corresponds to the \emph{extraction}
in the context of the
renormalisation group.

Now, because $\tOne=(\tOne_{\!\!-n})_{n\leqslant N}$ is centred, its evolution along the flow
will not be dictated by the scaling of its expectation, but by that of its
variance.
When looking at the variance of $\tOne_{\!\!-n}$, we gain an entropic factor
$L^{-2(2-\alpha)(N-n)}$, which is a consequence of the fact that
\begin{equation*}
    \begin{aligned}
        \EE \big[ |Q_{1} \xi_{-n} (0)|^{2} \big]
        = L^{-2d} \sum_{x \in \bigsquare_{1} (0)} \EE \big[ | \xi_{-n} (x) |^{2}\big] = L^{-d} = L^{- 2(2-
                \alpha)}\,.
    \end{aligned}
\end{equation*}
In combination with the square of the factor $ L^{(2- 2 \alpha)(N-n)}$ in
\eqref{eq:heuristic}, this leads the variance of
$\tOne^{(N)}_{\!\! -n}$ to flow like
$L^{-2\alpha(N-n)}$. 
For this reason, we say that $\tOne$ is \emph{probabilistically irrelevant}.
We refer to the proof of Lemma~\ref{lem_stoch} for a detailed calculation.


The situation is analogous to that of the noise $ \xi$.
While the noise in \eqref{eq:heuristic} is amplified by a factor
$L^{(2-\alpha)(N-n)}$ 
along the flow, it is the covariance that
dictates its evolution. 
Indeed, we find that the covariance of the noise is constant along the flow,
due to scale invariance: A covariance calculation shows that we gain an entropic factor $L^{-d/2(N-n)}$
which exactly balances the amplification of the noise by $ L^{(2 - \alpha)
            (N-n)}$ along the flow, see also \eqref{e_def_white_noise_unit}.
The noise is therefore \emph{probabilistically marginal}.
Overall, in the subcritical regime $ \alpha> 0$, only a finite number of quantities are
deterministically relevant, and only the noise is probabilistically marginal. 

Therefore, provided expectations of deterministically relevant terms
have been placed inside the evolution of the mass term, we are left with studying
the flow of a finite collection of probabilistically irrelevant quantities through covariance computations.
Based on this heuristic,  we define rigorously $\tOne$ and
$\Psi$ in the next sections. Moreover, we show that $\tOne$ satisfies
good probabilistic estimates (see Lemma~\ref{lem_stoch}), while $\Psi$ can be
constructed deterministically given the noise $ \xi$ and $\tOne$ (see Lemma~\ref{lem_fp}).

\begin{remark}\label{rem:3_1}
	The characterisation \eqref{eq:sys} is slightly different from the equation for
    the effective force obtained in \cite[Display~(15)]{Kupiainen2016}. This is due to a fundamental difference between hierarchical and Euclidean models, related to the domain of the effective Green's function $G_\mu$.
    As Kupiainen \cite{Kupiainen2016}, we consider a fixed point problem of the form
    $v=GF[v]$, where $G$ denotes the associated Green's function. Likewise, we introduce
    scales $\mu\in L^{-\NN}$ and a trajectory a regularised Green's functions
    $(G_\mu)_\mu$ starting from $ G$, and we seek to define a trajectory of effective solutions $(v_\mu)_\mu$ starting from $v$ and solving $v_\mu=G_\mu F_\mu[v_\mu]$, where $(F_\mu)_\mu$ is a trajectory of effective forces starting from $F$.

    In the Euclidean case, we can assume that while the kernel of $G_\mu$ grows with the
    scale $\mu$, $G_\mu$ has the same domain as $G$, namely it acts by convolution on all
    (say) bounded functions on the Euclidean space. In particular, $G_\mu$ can act
    directly on $F[v]$, so a minimal choice of trajectory $(v_\mu)_\mu$ satisfying $v_0=v$ is given by $v_\mu=G_\mu F[v]$. Making this choice and recalling the definition of $F_\mu$ yields the relation $F_\mu[v_\mu]-F[v]\in\mathrm{Ker}(G_\mu)$. In fact, because shifting $F_\mu[v_\mu]$ by an element of the kernel of $G_\mu$ would not modify the equation satisfied by $v_\mu$, one has the freedom to enforce that $F_\mu[v_\mu]=F[v]$
is constant in $\mu$. 
Consequently, for the fluctuation field we have
\begin{equation*}
	\begin{aligned}
		\CB
f_\mu=v-v_\mu=GF[v]-G_\mu F_\mu[v_\mu]=(G-G_\mu)F_\mu[v_\mu]\,,
	\end{aligned}
\end{equation*}
where the right--hand side is expressed directly in terms of $F_\mu[v_\mu]$. This yields 
 the fixed point problem for the effective force:
    \begin{equs}\label{eq:floweq}
        F_\mu=F[\bigcdot+(G-G_\mu)F_\mu[\bigcdot]]\,.
    \end{equs}
    
    In the hierarchical case, the situation is different. The effective Green's function is given by $G_\mu\equiv
 (-\underline\Delta{}_{H,\log_L\mu})^{-1}$, and has domain $\mathcal
 C(\Lambda_{\log_L\mu})$, so $F[v]\in \mathcal C(\Lambda_{-N}) $ does not lie in the
 domain of $G_\mu$ for all the scales $\mu>L^{-N}$. On the other hand, in the
 hierarchical case we defined $v_\mu$ by $v_\mu\eqdef \rho_\mu v=\rho_\mu GF[v]$ where
 $\rho_\mu\equiv \underline{Q}{}_{\log_L\mu}$, cf.~\eqref{eq:defofv-n}. In view of~\eqref{eq:commutator1}, 
	$\rho_\mu$ and $G$ satisfy the commutation relation $\rho_\mu G= G_\mu\rho_\mu$,
	which implies that $v_\mu =G_\mu\rho_\mu GF[v]$. Note the presence of
	$\rho_\mu:\mathcal C(\Lambda_{-N})\rightarrow \mathcal C(\Lambda_{\log_L\mu})$,
	which brings back $F[v]$ into the domain of $G_\mu$. Because of the necessary presence of $\rho_\mu$, we only have that $F_\mu[v_\mu]=\rho_\mu F[v]$
    (in contrast to $F_\mu[v_\mu]= F[v]$ in the Euclidean case). In particular, the
    fluctuation field cannot be rewritten directly in terms of $F_\mu[v_\mu]$, since it
    also depends on $(1-\rho_\mu )F[v]$ which cannot be expressed in terms of  $F_\mu[v_\mu]$.

We observe that, with regard to analytical difficulty, the system~\eqref{eq:sys} is equivalent to the
fixed-point problem~\eqref{eq:floweq}. Indeed,~\eqref{eq:sys} has a triangular structure:
We first solve for $\CB f_{\mu}$ in terms of $F$, which is the same as solving~\eqref{eq:floweq}. Once $\CB f_{\mu}$ is known, $F_{\mu}$ is obtained directly.
\end{remark}

\begin{remark}
Yet another difference to the Euclidean RGs is that in the hierarchical model we are
restricted to the discrete set of scales $L^{- \NN}$. While in the Euclidean case, we may choose
a continuous scale $\mu\in[0,1]$, and replace \eqref{eq:floweq} by a differential equation.
Moreover, such a continuous RG flow allows to derive a
differential equation solved by the cumulants of the effective equation. This yields an inductive construction of the stochastic terms, bypassing any tedious moment computations \cite{Duch21,Duch22}.
\end{remark}

\subsection{The enhanced noise}\label{sec_enhanced_noise}

We introduced the family $ (\xi_{-n})
    _{n \in \NN}$ of independent standard normal random
variables in \eqref{e_def_white_noise_unit}.
The random field $\xi_{-n}$ is the effective white noise at scale $L^{-n}$,
see also Lemma~\ref{lem_white_noise}.
We denote by $\xi^{(N)}$ the truncation of $\xi$ at $n=N$, namely
\begin{equs}
    \xi^{(N)}_{-n}=\begin{cases}
        \xi_{-n} & \text{for }n\leqslant N\,, \\
        0        & \text{otherwise}\,.
    \end{cases}
\end{equs}
Moreover, given two linear functionals $\xi(f)$, $\xi(g)$ of the noise, we define
the Wick product
\begin{equs}
    \xi (f) \diamond  \xi (g) \eqdef \xi(f)  \xi(g) - \BF E [  \xi(f)   \xi(g)
        ]\,,
\end{equs}
to denote the projection of the product $\xi(f)  \xi(g)$ onto the second homogeneous Gaussian chaos.

Let us now turn our attention to the term $ \tOne_{\!\! - n}$ in the effective force
\eqref{eq:ansaztforce}.
For $N\in\NN$ and $0\leqslant n\leqslant N$, we define
\begin{equation}\label{eq_def_db}
    \begin{aligned}
        {\tOne}^{(N)}_{\!\! -n} \eqdef L^{(2- 2\alpha) (N-n)}
        S_{N-n}Q_{ N-n} ( \xi_{-N}\diamond \Gamma_{N-n}
        \xi_{-N})\,,
    \end{aligned}
\end{equation}
which describes the evolution of the second homogeneous chaos of the effective
force along the flow, at scale $n$. For $n<0$ or $n>N$, we enforce
${\tOne}^{(N)}_{\!\! -n}\equiv0$, by convention. Note that contrary to the
effective noise $\xi^{(N)}$, which starts from $\xi^{(N)}_{-N}=\xi_{-N}$, the
sequence $ \tOne^{(N)} =(\tOne^{(N)}_{\! \!-n})_{ n\in\NN}$ starts from $0$ at
$-N$.
Indeed, $ \tOne$ is not present in the original equation, but is only generated
by the flow.
Throughout the paper, we will often drop the dependence on the cutoff $N$ and
write $ ( \xi, \tOne )$ for the sequence $( \xi^{(N)}_{-n} , \tOne_{\!\!
        -n}^{(N)})_{n \leqslant N}$. We call this tuple the \emph{enhanced noise}.\\

For a small fixed $\kappa_{\rms}>0$, we endow the stochastic terms
$\xi=(\xi_{-n})_{n\in\NN}$ and $\tOne=(\tOne_{\!\!-n})_{n\in\NN}$ with the norm
\begin{equs}\label{eq_stoch_norm}
    \nnorm{ \xi,\tOne} \eqdef  \sup_{n\in\NN} L^{-\kappa_{\rms}
            n}\Big(\sup_{x\in\Lambda^n} | \xi_{-n}(x) |\vee\sup_{x\in\Lambda^n} | \tOne_{\!
        \!-n}(x) |^{1/2}\Big)\,.
\end{equs}
Moreover, we set
\begin{equs}\label{eq_unif_stochnorm}
    \Norm{\xi,\tOne}\eqdef\sup_{N\in\NN}\nnorm{\xi^{(N)},\tOne^{(N)}}
    \,.
\end{equs}

Throughout the rest of the article, we will assume that we are on a good event where $(\xi^{(N)},\tOne^{(N)})$ is of finite size uniformly in $N\geqslant0$.
Namely, for every
$g\in(0,1]$, we define
\begin{equs}\label{eq_def_Omega}
    \Omega_g\eqdef \big\{ \Norm{\xi,\tOne}\leqslant ( 2g)^{-1} \big\}\,.
\end{equs}
The choice to work on $\Omega_g$ is a way to eliminate the large field problem,
since occurrences of $\xi$ and $\tOne^{(N)}$ will always appear with a
$g$ and $g^2$, respectively, so that the size of these object does not pose any
problem.
The side effect is that we do not cover the full probability space, and that
the event $\Omega_g$ shrinks if one considers large $g$.
On the other hand, for small $ g$ we have an exponentially vanishing bound of the tail
probability, which is a consequence of moment bounds for \eqref{eq_stoch_norm} which we establish
in  Lemma~\ref{lem_stoch}.

\begin{lemma}\label{lem_tailprob}
    There exist constants $c, C > 0 $, such that
    for every $ g \in (0,1]$
    \begin{equs}
        \BF P(\Omega_g)\geqslant 1-Ce^{-cg^{-2}}\,.
    \end{equs}
\end{lemma}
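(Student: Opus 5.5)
The plan is to derive the tail bound from Gaussian-type moment bounds on $\Norm{\xi,\tOne}$, via Markov's inequality and an optimisation over $p$. The paper announces these bounds in Lemma~\ref{lem_stoch}; I take them in the form
\begin{equs}
    \EE\big[\Norm{\xi,\tOne}^p\big]\leqslant C_0^p\, p^{p/2}\,,\qquad p\geqslant p_0\,,
\end{equs}
with $C_0$ and $p_0$ independent of $N$. This scaling is natural. By hypercontractivity, $\xi_{-n}(x)$ has $p$-th moment of order $p^{p/2}$. The term $\tOne_{\!\!-n}(x)$ lies in the second chaos, so its $p/2$-th moment is of order $(p/2)^{p/2}\,\EE[\tOne_{\!\!-n}(x)^2]^{p/4}$; because the norm \eqref{eq_stoch_norm} uses $|\tOne_{\!\!-n}|^{1/2}$, this again gives $p^{p/2}$ growth.

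To obtain these bounds uniformly in $N$, I would argue as in Lemma~\ref{lem_white_noise}, replacing the suprema by sums:
\begin{equs}
    \EE\big[\Norm{\xi,\tOne}^p\big]\leqslant \sum_{N}\sum_{n\leqslant N} L^{-p\kappa_{\rms} n}\sum_{x\in\Lambda^n}\Big(\EE|\xi_{-n}(x)|^p+\EE|\tOne^{(N)}_{\!\!-n}(x)|^{p/2}\Big)\,.
\end{equs}
The sum over $N$ causes trouble. To control it, I would also need a variance bound that decays in $N-n$, for instance $\EE[|\tOne^{(N)}_{\!\!-n}(x)|^2]\lesssim L^{-2\alpha(N-n)}$, which is the probabilistic irrelevance discussed above. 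That decay makes the sum over $N\geqslant n$ geometric. Alternatively, one can handle the supremum over $N$ by noting that $\tOne^{(N)}_{\!\!-n}$ is a telescoping sum whose increments decay geometrically. The sums over $n$ and $x$ contribute $\sum_n L^{(d-p\kappa_{\rms})n}$, which is finite once $p>d/\kappa_{\rms}$. This part is the main obstacle: it requires an $N$-uniform second-moment computation for $\tOne$, which is the content of Lemma~\ref{lem_stoch}. I would rely on that lemma rather than redo the computation.

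With the moment bound available, Markov's inequality gives
\begin{equs}
    \PP\big(\Norm{\xi,\tOne}>(2g)^{-1}\big)\leqslant (2g)^p C_0^p p^{p/2}=\big(2C_0 g\sqrt p\big)^p\,.
\end{equs}
Choose $p=(4eC_0g)^{-2}$, so that $2C_0g\sqrt p=(2e)^{-1}<e^{-1}$. The right-hand side is then at most $e^{-p}=e^{-cg^{-2}}$ with $c=(4eC_0)^{-2}$. This choice requires $p\geqslant p_0$, which holds for $g\leqslant g_0$ small enough. For $g\in(g_0,1]$, the bound holds trivially: take $C\geqslant e^{c g_0^{-2}}$, so that $1-Ce^{-cg^{-2}}\leqslant 0$. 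This proves Lemma~\ref{lem_tailprob} with constants independent of $g$.
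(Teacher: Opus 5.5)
Your argument is correct and follows essentially the paper's approach: both convert the $p^{p/2}$ moment growth from Lemma~\ref{lem_stoch} into a Gaussian tail through a Chebyshev-type inequality. The only difference is technical. You apply Markov's inequality to a single moment, optimise $p\sim g^{-2}$, and handle $g\in(g_0,1]$ by enlarging $C$, whereas the paper applies the exponential Chebyshev inequality to $e^{c\Norm{\xi,\tOne}^2}$ and sums the moment series using Stirling's formula.
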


\noindent We defer the proof of the lemma to Section~\ref{sec_stoch_est}, just after the statement of Lemma~\ref{lem_stoch}.\\

Working on events of the form $ \Omega_{g}$ is analogous to choosing a small random time
(or a small coupling constant) as in \cite{Hai14, Kupiainen2016,Duch21}.
Indeed, one could equally have chosen to randomise the coupling constant setting $ g =
    \Norm{\xi, \tOne}^{-1}$ in the present paper.

\begin{remark}
    Recall from Lemma~\ref{lem_white_noise} that the norm on the stochastic terms controls
    \begin{equs}
        \| {\xi}\|_{\CB C^{-2+\alpha-\kappa_\rms}}
        \leqslant
        \Norm{ \xi,\tOne}
        \,.
    \end{equs}
    On the other hand, for the effective second chaos $\underline\tOne{}_{\!
            n}\eqdef L^{(2-2\alpha)n} S_n\tOne{}^{(N)}_{\!- n}$ on $\Lambda_{-n}$, the
	    norm enforces that uniformly in $N$, $n$ and $x\in\Lambda_{-n}$
    \begin{equs}
        | \underline\tOne{}^{(N)}_{\! n}(x) |\lesssim
        L^{-(-2+2\alpha-2\kappa_\rms)n}\,.
    \end{equs}
    Therefore, finiteness of the stochastic norm \eqref{eq_stoch_norm} ensures
    that on the unit lattice the noise is of regularity $-2+\alpha-\kappa_\rms$,
    and the second chaos is of (better) regularity
    $-2+2\alpha-2\kappa_\rms=-2\kappa_\rms$.
    All other chaoses are of positive
    regularity, and will be constructed deterministically as functions of the
    enhanced noise.
\end{remark}



\subsection{Proof of the main result}\label{sec_eff_eq}

In the previous sections, we introduced the effective force which led to the
effective equation
\begin{equation}\label{e_v_n}
    \begin{aligned}
        (- \Delta_{H,N}) v^{(N)}_{-n} =
        \big(\lambda_{-n} \xi_{-n}
        + \lambda_{-n}^{2} \tOne^{(N)}_{\! \! -n}
        + \lambda_{-n}^{3} \Psi^{(N)}_{-n} + \mu_{-n}\big) v^{(N)}_{-n}  +
        \gamma_{-n}\,.
    \end{aligned}
\end{equation}
The right--hand side consists in particular of the enhanced noise $ ( \xi^{(N)} ,
    \tOne^{(N)})=(\xi^{(N)}_{-n},\tOne^{(N)}_{\! \!-n})_{ n\leqslant N}$,
which we can control with
high probability, see Lemma~\ref{lem_tailprob}.

Our goal in the subsequent sections is to describe the evolution of the constants
$ (\lambda_{-n}, \mu_{-n}, \gamma_{-n})_{ n \leqslant N}$, and prove that the
UV limit
\begin{equation*}
    \begin{aligned}
        ( \xi_{-n}, \tOne^{\,\star}_{\! \!-n},
        \Psi^{\star}_{- n})_{n \in \NN} \eqdef
        \lim_{N \to \infty}
        \Big( \xi^{(N)}_{-n}, \tOne^{(N)}_{\!\! -n}, \Psi^{(N)}_{-n}\Big)_{ n\leqslant N}\,,
    \end{aligned}
\end{equation*}
exists, provided we are on the good event $ \Omega_{g}$.
The main result for the trajectory of solutions $ ( v^{(N)}_{-n})_{n =0
            ,\ldots, N}$ on unit lattices,
which is the main ingredient of the proof of Theorem~\ref{thm_main},
is summarised in the following proposition.

\begin{proposition}\label{prop_main}
    Let $d=2$.
    For every $r \geqslant 1$, there exists an odd $L \in \NN$, sufficiently large
    (and depending only on $r$), such that for every $g \in (0,1]$ and every $n \in \NN$, the limit
    $
        v_{-n }^{\star} \eqdef \lim_{N \to \infty} v_{-n}^{(N)}
    $
    exists on $ \Omega_{g}$, and solves the renormalised equation
    \begin{equation*}
        \begin{aligned}
            (- \Delta_{H}) v_{-n}^{\star} =
            \big(\lambda_{-n} \xi_{-n}
            + \lambda_{-n}^{2} \tOne_{\! \! -n}^{\, \star}
            + \lambda_{-n}^{3} \Psi^{\star}_{-n} + \mu_{-n}\big)v^{\star}_{-n}  +
	    \gamma_{-n}\,,
        \end{aligned}
    \end{equation*}
    with  $ \lambda_{-n} = L^{-n} g$, $ \gamma_{-n} =
        L^{-n}$, and $ \mu_{-n} =
        L^{- 2n} \big( r - n (1- L^{-d}) g^{2} \big)$.
	
	Moreover, we have $ v^{\star}_{-n} = L^{- \alpha (m-n)}S_{m-n} Q_{m-n} v^{\star}_{-m}$, for all
	$ m>n $.
\end{proposition}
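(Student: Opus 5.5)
The proof will run in three stages: identify the flow of the constants, control the irrelevant remainder $\Psi$, and pass to the limit $N\to\infty$ at fixed $n$. The heavy lifting is deferred to the later lemmas the paper announces, namely the stochastic bounds of Lemma~\ref{lem_stoch} and the fixed point Lemma~\ref{lem_fp}.

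\emph{Stage 1: the RG map and the constants.} I would write the one-step RG map from \eqref{eq:sys} with $N-n=1$, using Lemma~\ref{lem:Qtwice} to compose steps. If $F_{-n}$ has the form \eqref{eq:ansaztforce}, then $F_{-n+1}$ again has that form with
\[
\lambda_{-n+1}=L^{\alpha}\lambda_{-n},\qquad \gamma_{-n+1}=L^{2-\alpha}\gamma_{-n},
\]
and a mass recursion of the form
\[
\mu_{-n+1}=L^2\big(\mu_{-n}+c_L\lambda_{-n}^2\big)+\text{(terms absorbed in }\Psi).
\]
The constant $c_L$ is the expectation $L^{2}\,\EE[S_1Q_1(\xi_{-n}\Gamma_1\xi_{-n})]$, extracted via the Wick product. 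By Lemma~\ref{lem:inverse} and $P_1=1-Q_1$, this equals $(1-L^{-d})$ per step. With the bare values $\lambda_{-N}=L^{-N}g$, $\gamma_{-N}=L^{-N}$ and $\mu_{-N}=L^{-2N}(r-N(1-L^{-2})g^2)$, the recursion telescopes. Each of the $N-n$ steps removes one unit of $(1-L^{-2})g^2$ from the counterterm, which gives exactly $\mu_{-n}=L^{-2n}(r-n(1-L^{-d})g^2)$, independent of $N$. This bookkeeping is where the choice of bare mass is justified.

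\emph{Stage 2: the remainder $\Psi^{(N)}_{-n}$.} I would define $\Psi^{(N)}_{-n}$ as the remainder, written as a deterministic function of the enhanced noise $(\xi,\tOne)$ and of $\Psi_{-n-1}$. The irrelevance $L^{2-3\alpha}=L^{-1}$ then makes the map $\Psi_{-n-1}\mapsto\Psi_{-n}$ a contraction in a weighted sup norm such as $\sup_n L^{-c\kappa_{\rms} n}\|\Psi_{-n}\|_\infty$. On $\Omega_g$ the prefactors $\lambda\xi$ and $\lambda^2\tOne$ are bounded by $1/2$, and $r\geqslant1$ together with $L$ large gives a mass term that dominates. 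Both the fluctuation field $\CB f$ (a Neumann series for $1-\Gamma_1(\text{potential}\,\cdot)$) and the remainder are therefore controlled uniformly in $N$. This is the content expected from Lemma~\ref{lem_fp}.

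\emph{Stage 3: convergence and consistency.} Convergence as $N\to\infty$ follows by comparing the trajectories for $N$ and $N+1$. Their inputs differ only through $\xi^{(N)}$ versus $\xi^{(N+1)}$ and $\tOne^{(N)}$ versus $\tOne^{(N+1)}$ near the top scale. By Lemma~\ref{lem_stoch}, $\tOne^{(N)}_{-n}$ is Cauchy in $N$, with variance gain $L^{-2\alpha(N-n)}$. The contraction then propagates a geometrically small difference, of order $L^{-(1-C\kappa_{\rms})(N-n)}$, down to scale $n$. Hence $\Psi^{(N)}_{-n}\to\Psi^\star_{-n}$.

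Given the limiting coefficients, $v^{(N)}_{-n}$ is the unique solution of the linear effective equation \eqref{e_v_n} on $\Lambda^n$. Uniqueness holds because $-\Delta_H-\mu_{-n}-\text{potential}$ is invertible once the mass dominates; for $n$ large compared with $r/g^2$ the effective mass turns negative, and this is handled through the same smallness. It follows that $v^{(N)}_{-n}$ converges to the solution $v^\star_{-n}$ of the limiting equation. The consistency $v^\star_{-n}=L^{-\alpha(m-n)}S_{m-n}Q_{m-n}v^\star_{-m}$ holds for each finite $N$ by \eqref{eq:defofv-n} and Lemma~\ref{lem:Qtwice}, and it passes to the limit because the operators involved are finite-dimensional.

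\emph{Main obstacle.} I expect the hardest step to be the uniform-in-$N$ contraction for $\Psi$ together with the stability of the effective operator at every scale. This is where one must verify that the irrelevant terms generated by the quadratic expectation and by higher iterates lose a full factor $L^{-1+C\kappa_{\rms}}$ per step, and it is where the largeness of $L$ in terms of $r$ enters.
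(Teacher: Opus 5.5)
Your Stages~1--2 and the convergence of $\tOne^{(N)}$ and $\Psi^{(N)}$ follow the paper's route (the recursion \eqref{e_rg_system} and Lemmas~\ref{lem_fp}, \ref{lem_stoch_conv}, \ref{lem_remainder_conv}). The gap is in the one step that concerns the solution itself. You deduce $v^{(N)}_{-n}\to v^\star_{-n}$ from uniqueness for the scale-$n$ equation, asserting that $-\Delta_{H,n}-\mu_{-n}-V_{-n}$ is invertible ``once the mass dominates'', where $V_{-n}\eqdef\lambda_{-n}\xi_{-n}+\lambda_{-n}^2\tOne_{\!\!-n}+\lambda_{-n}^3\Psi_{-n}$. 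But the mass dominates at no scale $n\geqslant1$.

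A Neumann series around $-\Delta_{H,n}-\mu_{-n}$ would need $\|(-\Delta_{H,n}-\mu_{-n})^{-1}\|\,\|V_{-n}\|_\infty<1$. Since $-\Delta_{H,n}=\sum_{k=1}^nL^{-2(k-1)}P_k$ vanishes on constants, the first factor is at least $|\mu_{-n}|^{-1}=L^{2n}|r-n(1-L^{-2})g^2|^{-1}$. It is even infinite when $\mu_{-n}$ changes sign near $n\approx r/g^2$. Meanwhile $\|V_{-n}\|_\infty$ is typically at least of order $gL^{-n}$. The smallness available on $\Omega_g$ is smallness relative to $1$, i.e.\ relative to $-\Delta_{H,n}$ on the range of $P_1$ only. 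On the coarser modes no argument at fixed scale $n$ can work. Indeed, the counterterm $-n(1-L^{-2})g^2$ in $\mu_{-n}$ is compensated only by the second-order contributions of all scales between $n$ and $0$. The same misconception appears in your Stage~2: in the fixed-point problem for $\mR_{-n+1}$ the mass is a small term, $|\mu_{-n}|\leqslant L^{(-2+\kappa_\rms)n}(r+c)$. That smallness, not domination, is where $L$ depending on $r$ enters.

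The missing idea is to propagate invertibility and convergence downwards from scale $0$ through the RG decomposition; this is what Lemmas~\ref{lem_aid_besov} and~\ref{lem_supp_conv_v} do.

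At scale $0$ the equation is scalar, $(g\xi_0+g^2\tOne_{0}+g^3\Psi_0+r)v_0=-1$, and here the mass genuinely dominates. On $\Omega_g$ the bracket is at least $r-\frac12-\frac14-\frac18\geqslant\frac18$, so $v^{(N)}_0$ is an explicit function of convergent coefficients with denominator bounded below.

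To pass from $v_{-n}$ to $v_{-n-1}$ one uses $Q_1v_{-n-1}=L^{\alpha}S_{-1}v_{-n}$. Because $P_1(-\Delta_H)^{-1}=P_1$, the fluctuation $P_1v_{-n-1}$ solves \eqref{eq_supp2_z} with $n$ replaced by $n+1$. That equation is a contraction for $L$ large depending on $r$, so $P_1v_{-n-1}=\mM_{-n}Q_1v_{-n-1}$.

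At the operator level, the $P_1$-block of the scale-$(n+1)$ operator is invertible. Its Schur complement on the range of $Q_1$ is, after rescaling, the scale-$n$ operator. By induction, the operator at every scale is invertible if and only if the scalar at scale $0$ is nonzero. This yields existence and uniqueness of $v^{(N)}$, which you took for granted. Convergence of $v^{(N)}_{-n}$ then follows inductively from that of $v^{(N)}_0$ and of $\mM^{(N)}_{-k}$, $0\leqslant k<n$.

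With this reduction in place, your continuity-of-inversion argument at fixed $n$ would also go through. The invertibility, however, has to come from scale $0$, not from the mass at scale $n$.
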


The proof of Proposition~\ref{prop_main} is the goal of Section~\ref{sec_conv}, and
will be stated at the end of said section.
Moreover, we establish uniform control over the convergence of trajectories in a Hölder
sense:

\begin{lemma}\label{lem_supp_conv_v}
	Let $d=2$.
    For every $r \geqslant 1$, there exists an odd $L \in \NN$, sufficiently large
    (and depending only on $r$), such that for every $g \in (0,1]$ and every $
    \kappa \in ( 3\kappa_{\rms},1)$ 
    \begin{equation*}
	    \begin{aligned}
		    \lim_{N \to \infty}  \sup_{n \geqslant 1} L^{- 
		    \kappa  n}  \sup_{x\in \Lambda^{n}}
	\big|P_1\big(v_{-n}^{\star}- v_{-n}^{(N)}\big)
	(x)\big| =0\,, \quad \text{ on } \Omega_{g}\,.
	    \end{aligned}
\end{equation*}
\end{lemma}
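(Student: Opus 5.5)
We need to control $P_1(v^\star_{-n}-v^{(N)}_{-n})$ uniformly in $n\geqslant1$ with a weight $L^{-\kappa n}$. The plan is to split into small and large $n$ relative to $N$. The split uses a uniform a priori bound together with a scale-by-scale contraction estimate, both produced by the rigorous RG analysis of Sections~\ref{sec_rig_RG}--\ref{sec_conv}.

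\textbf{Uniform bound.} The first step is an estimate
\[
\sup_{x\in\Lambda^n}|P_1 v^{(N)}_{-n}(x)|\lesssim L^{3\kappa_\rms n}
\]
on $\Omega_g$, uniformly in $N\geqslant n$, and likewise for $v^\star_{-n}$. We obtain it from the effective equation~\eqref{e_v_n} by applying $P_1$ and using Lemma~\ref{lem:inverse}, namely $P_1(-\Delta_H)^{-1}=P_1$. This gives
\[
P_1 v_{-n}=P_1\big[(\lambda_{-n}\xi_{-n}+\lambda_{-n}^2\tOne_{\!\!-n}+\lambda^3_{-n}\Psi_{-n}+\mu_{-n})v_{-n}\big].
\]
On $\Omega_g$ the stochastic terms are bounded by $g^{-1}L^{\kappa_\rms n}$ and $g^{-2}L^{2\kappa_\rms n}$. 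The couplings $\lambda_{-n}=L^{-n}g$ and $\mu_{-n}$ are small, and $\sup|v_{-n}|$ is bounded via the averaging relation $v_{-n}=L^{-\alpha(m-n)}S_{m-n}Q_{m-n}v_{-m}$ and the bound on $\Psi$ from Lemma~\ref{lem_fp}. Together these give the claimed growth, with the exponent $3\kappa_\rms$ accounting for the three orders in $\lambda$. For every $\kappa>3\kappa_\rms$ this yields
\[
\sup_{n\geqslant n_0}L^{-\kappa n}\sup_x|P_1(v^\star_{-n}-v^{(N)}_{-n})|\lesssim L^{-(\kappa-3\kappa_\rms)n_0}\,,
\]
which is small once $n_0$ is large, uniformly in $N$. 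For $n>N$ we use the conventions (truncated noise) so that $v^{(N)}_{-n}$ is either not present or covered by the same bound.

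\textbf{Finitely many scales.} For $n\leqslant n_0$ we invoke the convergence $v^{(N)}_{-n}\to v^\star_{-n}$ from Proposition~\ref{prop_main}. Since $\Lambda^n$ is finite and $P_1$ is a bounded linear operator on $\mC(\Lambda^n)$, pointwise convergence at each of the finitely many scales $n\leqslant n_0$ gives
\[
\max_{n\leqslant n_0}\sup_x|P_1(v^\star_{-n}-v^{(N)}_{-n})|\to0\,.
\]
Sending $N\to\infty$ and then $n_0\to\infty$ concludes the argument.

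\textbf{Main obstacle.} The hard step is the uniform-in-$N$ bound on $v^{(N)}_{-n}$ and $\Psi^{(N)}_{-n}$. Specifically, we need to show that the sup norm of $v_{-n}$ does not grow faster than $L^{\kappa_\rms n}$ along the flow. We would obtain this from the contractivity of the RG map established in Section~\ref{sec_rig_RG}: $\mu_{-n}\approx L^{-2n}r$ with $r\geqslant1$ provides the mass that makes the effective equation invertible with constants independent of $n$. If that route fails, we would instead prove directly a contraction estimate for the differences $v^{(N+1)}_{-n}-v^{(N)}_{-n}$ of size $L^{-(1-3\kappa_\rms)(N-n)}$ and sum it over $N$. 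This gives the same weighted conclusion as long as $\kappa<1$.
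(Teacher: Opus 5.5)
\textbf{The reduction is circular.} Your split is a valid reduction: a tail $n\geqslant n_0$ handled by a uniform bound, plus finitely many scales $n\leqslant n_0$ handled by convergence at fixed $n$. The tail part is just Lemma~\ref{lem_aid_besov}, which gives $|P_1 v^{(N)}_{-n}|\leqslant 8L^{3\kappa_{\rms}n}$ uniformly in $N$ on $\Omega_g$. But you take the fixed-scale convergence $v^{(N)}_{-n}\to v^\star_{-n}$ from Proposition~\ref{prop_main}, and the paper proves that proposition by citing Lemma~\ref{lem_supp_conv_v} for the convergence of $v^{(N)}$ to $v^\star$. The UV convergence of the solution at each scale is exactly what this lemma must establish, so after your reduction nothing of substance has been proved.

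What is missing is a mechanism that transfers the convergence of the force coefficients (Lemmas~\ref{lem_stoch_conv} and~\ref{lem_remainder_conv}) to the solution. The paper does this by running the backward induction of Lemma~\ref{lem_aid_besov} on differences:
\begin{itemize}
\item At $n=0$, $v^{(N)}_0$ solves a scalar equation whose coefficient stays at distance at least $r-\tfrac78$ from zero on $\Omega_g$. Hence $|v^{(N+M)}_0-v^{(N)}_0|$ is controlled by the coefficient differences.
\item At each later step, the difference of $w_{-n+1}=Q_1v_{-n}$ is $L^{\alpha}S_{-1}$ applied to the previous difference.
\item The difference of $z_{-n+1}=P_1v_{-n}$ is read off from \eqref{eq_supp2_z}. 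Its linear part is a contraction with constant $\leqslant\tfrac12$, and its source consists of coefficient differences multiplied by $w$ and $z$, which Lemma~\ref{lem_aid_besov} bounds.
\end{itemize}
An alternative would also close your split: at fixed $n$, $\mC(\Lambda^n)$ is finite-dimensional, the coefficients converge, and the $v^{(N)}_{-n}$ are bounded. So any limit point solves the limiting equation. You would then have to prove that this limiting equation has a unique solution, e.g.\ via the same RG decomposition down to $n=0$. Your fallback, a contraction estimate for $v^{(N+1)}_{-n}-v^{(N)}_{-n}$ at rate $L^{-(1-3\kappa_{\rms})(N-n)}$, is only asserted. Deriving it would require exactly this difference propagation.

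\textbf{The mechanism you give for the uniform bound is wrong.} The sup norm of $v_{-n}$ is neither bounded nor of size $L^{\kappa_{\rms}n}$. The mean of $v_{-n}$ over $\Lambda^n$ equals $L^{\alpha n}v_0$, and $|v_0|\geqslant (r+\tfrac78)^{-1}$ on $\Omega_g$, so $\|v_{-n}\|_\infty\gtrsim L^{\alpha n}$. The bound on $P_1v_{-n}$ holds because $\lambda_{-n}=L^{-\alpha n}g$ compensates the growth $|Q_1 v_{-n}|\lesssim L^{(\alpha+\kappa_{\rms})n}$. This is why Lemma~\ref{lem_aid_besov} tracks $Q_1v_{-n}$ and $P_1v_{-n}$ separately. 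The mass $r\geqslant1$ enters only at $n=0$. For $n\geqslant1$, $\mu_{-n}$ is a small perturbation, and the fluctuation equation is solvable because $P_1(-\Delta_H)^{-1}=P_1$ and the couplings are small. The ``main obstacle'' you identify is therefore misstated. Cite Lemma~\ref{lem_aid_besov} for that part and put the effort into the convergence step above.
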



The proof of  Theorem~\ref{thm_main} is now a consequence of
Proposition~\ref{prop_main} and Lemma~\ref{lem_supp_conv_v}, by rescaling the
solution $ v_{-n}$ to the coarse grained lattice $ \Lambda_{-n} \subset
    \TT^{2}$.

\begin{proof}[Proof of Theorem~\ref{thm_main}]
	Let $ \kappa \in { (3 \kappa_{\rms}, 1)}$.
	We recall from \eqref{eq_v2u} that 
	\begin{equation*}
		\begin{aligned}
			P_1v^{(N)}_{-n}= L^{\alpha n}P_1S_{N-n}Q_{N-n}S_{-N}u_{N}\,,
		\end{aligned}
	\end{equation*}
	and that $ v^{(N)}_0=\underline Q{}_0u_{N}$. We therefore set 
	$u^{\star}_{n} \eqdef L^{ - \alpha n } S_{n} v^{\star}_{-n}$, where $
	v^{\star}$ is the limit from Proposition~\ref{prop_main}, which provides a
	candidate trajectory of renormalised solutions. In particular, we have 
	$ u^{\star}_{n} = \underline{Q}{}_{-m+n} u^{\star}_{m}$ for all $m >n$.
	We define the reconstruction  of $ (
	u^{\star}_{n})_{n \in \NN}$ to be the unique limit $ u^{\star}= \lim_{n \to
\infty} u^{\star}_{n}$. Indeed, the limit exists because the sequence is Cauchy in $ \CB C
	^{1- \kappa} ( \TT^{2})$: For $m>n$
	\begin{equation}\label{eq_supp3_main}
		\begin{aligned}
			\| u_{m}^{\star} - u_{n}^{\star}\|_{\CB C^{1 - \kappa}}
				& = 
            \sup_{n<k \leqslant m} L^{( 1- \kappa)  k}  \sup_{x\in \TT^{d}}
	    \big|\underline{P}{}_{-k} u^{\star}_{m} (x)\big|
	    = 
	    \sup_{n<k \leqslant m} L^{- \kappa  k}  \sup_{x \in \Lambda_{-k}}
	    \big|\underline{P}{}_{-k} S_{k} v^{\star}_{-k}	    (x)\big|\\
	    & \leqslant 
	    L^{- {(\kappa - 3\kappa_{\rms})} n}\Big(
	    \sup_{n<k \leqslant m} L^{- {3\kappa_{\rms}}  k}  \sup_{x \in \Lambda_{-k}}
    \big| S_{k} P_{1}^{(k)} v^{\star}_{-k}	    (x)\big|\Big)\,,
		\end{aligned}
	\end{equation}
	where used the fact that $ \underline{Q}{}_{-n} u_{m}^{\star} = u_{n}^{\star}$ in
	the first equality, 
	and in the last step that $ \underline{P}{}_{-k} S_{k} = S_{k}P_{1}^{(k)}  $, cf.
	Definition~\ref{def_unit_ops}~(b).
	The right--hand side can be chosen arbitrarily small by choosing $n$ sufficiently
	large, because the term inside the parenthesis is uniformly bounded by Lemma~\ref{lem_supp_conv_v}. Thus, $(
	u^{\star}_{n})_{n \in \NN}$ is a Cauchy sequence.

	To conclude the theorem, it is only left to show that $ u_{N}$ indeed converges to
	$u^{\star}$ on $ \Omega_{g}$, i.e. to prove that the following quantity vanishes
	in the large $N$ limit:
	\begin{equation}\label{eq_u_hoelder}
		\begin{aligned}
			\| u^{\star} - u_{N}\|_{\CB C^{ 1- \kappa} } 
			& \leqslant  
			\big\|  u^{\star} - u^{\star}_{N} \big\|_{\CB
			C^{1 - \kappa} }\
			+
			\big\| u^{\star}_{N} - u_{N}\big\|_{\CB C^{1-
			\kappa} } 
			\,.
		\end{aligned}
	\end{equation}
	The first term on the right--hand side vanishes by \eqref{eq_supp3_main}. On the other hand, 
	for the second term we apply
	Lemma~\ref{lem:HBforlatticefunctions}, which yields
	\begin{equation*}
		\begin{aligned}
			\big\| \underline{Q}{}_{-N}u^{\star} - u_{N}\big\|_{\CB C^{1-
			\kappa} } 
		&	=
			|\underline{Q}{}_{0} ( u^{\star} - u_{N})|+
        \max_{1\leqslant n\leqslant N} L^{( 1- \kappa)  n}  \sup_{x\in \Lambda^{n}}
        \big|P_1S_{N-n}Q_{N-n} S_{-N}\big( \underline{Q}{}_{-N}u^{\star} - u_{N}\big)
	(x)\big|\\
	& = 
	| v_{0}^{\star} - v_{0}^{(N)}|+
        \max_{1\leqslant n\leqslant N} L^{( 1- \kappa)  n}  \sup_{x\in \Lambda^{n}}
	\big|P_1\big( L^{- \alpha N} S_{N-n}Q_{N-n}  v_{-N}^{\star}- 
	L^{- \alpha n} v_{-n}^{(N)}\big)
	(x)\big|\\
	& = 
	| v_{0}^{\star} - v_{0}^{(N)}|+
        \max_{1\leqslant n\leqslant N} L^{- \kappa  n}  \sup_{x\in \Lambda^{n}}
	\big|P_1\big(v_{-n}^{\star}- v_{-n}^{(N)}\big)
	(x)\big|\,,
		\end{aligned}
	\end{equation*}
	where we used that $ S_{-N}\underline{Q}{}_{-N} u^{\star} = S_{-N}
	u_{N}^{\star} = L^{- \alpha N} v_{-N}^{\star}$. 
	Applying Lemma~\ref{lem_supp_conv_v} finishes the proof.
\end{proof}

\section{Derivation of the perturbative dynamical system}\label{sec_pert}

In this section, we perform the first coarse graining step, going from $
    v_{-N}$ to $ v_{-N+1}$.
We will then state a candidate for the effective equation for $
    v_{-N+1}$, using a perturbative argument.
While this derivation is non--rigorous, it sheds light on the main idea of the
Renormalisation Group mechanism.
In Section~\ref{sec_rig_RG}, we will then keep track of (and control) the
remainder terms explicitly, providing the rigorous justification for the
steps outlined here.\\

We recall from \eqref{eq:defofv-n} that
\begin{equation*}
    \begin{aligned}
        v_{-N+1}(x) 
	= L^{- \alpha} S_{1} Q_{1} v_{-N}(x) \equiv L^{-\alpha}Q_1v_{-N}(Lx)\,, \qquad x \in
        \Lambda^{N-1}\,.
    \end{aligned}
\end{equation*}
For convenience, we also introduce $ w_{-N+1} \eqdef Q_{1} v_{-N}$ such that
$ v_{-N+1}(x) = L^{- \alpha}  w_{-N+1}(Lx)$.
Moreover, we decompose the solution into 
$$ v_{-N} 
= w_{-N+1} + z_{-N+1}\,,\quad\text{with }\quad
    z_{-N+1}\eqdef P_{1} v_{-N}\,.$$
In the jargon of the Renormalisation
Group, $w_{-N+1}$ is called the \emph{background field}, since it corresponds to the
average of $v_{-N+1}$, and $z_{-N+1}$ is called the \emph{fluctuation field}. Observe that $ w_{-N+1}$ and $z_{-N+1}$ satisfy the following
useful identities
\begin{equation}\label{e_useful_w1}
    \begin{aligned}
        Q_{1} z_{-N+1} = 0 \quad \text{and}\quad
        Q_{1} ( f \, w_{-N+1} ) = (Q_{1} f ) w_{-N+1} \,, \quad \forall f \in \mC (
        \Lambda^{N})\,.
    \end{aligned}
\end{equation}
Since we are only performing a single coarse graining step below, it will be
convenient to drop the subscripts and simply write $ w = w_{-N+1}$ and $ z
    = z_{-N+1}$ for the background and fluctuation field,
respectively.

Now, by application of $ Q_{1}$ to both sides of \eqref{eq_vN}, we see that
$ w$ satisfies the equation
\begin{equation}\label{eq_supp_w1}
    \begin{aligned}
        ( - Q_{1} \Delta_{H}) w
        =
        Q_{1}\big( (\lambda_{-N}  \xi_{-N} + \mu_{-N} ) v_{-N} + \gamma_{-N}
        \big)\,,
    \end{aligned}
\end{equation}
where we used that $ -Q_{1} \Delta_{H} = \sum_{n =2}^{N} L^{-2(n-1)} P_{n} Q_{1} = (
    -Q_{1} \Delta_{H}) Q_{1} $.
Using the decomposition $ v_{-N} = w +z $ and \eqref{e_useful_w1}, the right--hand side
can then be cast into
\begin{equation}\label{eq_supp_w2}
    \begin{aligned}
        ( - Q_{1} \Delta_{H}) w
        =
        \lambda_{-N}( Q_{1} \xi_{-N}) w
        + \lambda_{-N} Q_{ 1} ( \xi_{-N} z)
        + \mu_{-N} w + \gamma_{-N}
        \,.
    \end{aligned}
\end{equation}
In particular, the right--hand side is expressed entirely in terms of $w$,
except for the second term which depends on $ z $.
In order to replace $ z$ with a suitable expression in terms of $ w$,
we write the analogue of \eqref{eq_supp_w2} for the fluctuation field $ z$.
To this end, we apply
$ P_{1}$ to both sides of \eqref{eq_vN}, which yields
\begin{equation}\label{eq_z}
    \begin{aligned}
        z & =
        P_{1}\big( (\lambda_{-N}  \xi_{-N} + \mu_{-N} ) ( w+ z) +
        \gamma_{-N}\big) \\
          & =
        \lambda_{-N} P_{1} ( \xi_{-N} z)
        +
        \lambda_{-N} ( P_{1} \xi_{-N}) w
        +
        \mu_{-N} z\,,
    \end{aligned}
\end{equation}
where we used that $P_1(-\Delta_H)^{-1}=P_1$, see Lemma~\ref{lem:inverse}, and in the second step that $ P_{1} $ projects constants onto zero, and that $
    P_{1} w = 0 $ because $ P_{1} Q_{1} = 0 $.
Next, we notice that  $ w
    \mapsto  z[w]$ is
linear, because we were considering an affine equation. Therefore, it is convenient to
introduce the random field $\mM$ such that $ z = \mM w$, which then satisfies the equation
\begin{equation}\label{eq_M}
    \begin{aligned}
        \mM  =
        \lambda_{-N} P_{1} ( \xi_{-N} \mM)
        +
        \lambda_{-N} ( P_{1} \xi_{-N})
        +
        \mu_{-N} \mM\,,
    \end{aligned}
\end{equation}
which is independent of $ w$. Notice that the independence of $w$ is a consequence of the fact
that the equation we are considering is linear. Thus, in order to stay close to
the core idea of RG, we will not attempt to solve for $\mM$ directly.

Overall, we have successfully derived two coupled equations

\begin{empheq}[left=\empheqlbrace]{alignat=2}\begin{aligned}\label{eq:sysRG_firststep}
        ( - Q_{1} \Delta_{H}) w
            & =
        \lambda_{-N}( Q_{1} \xi_{-N}) w
        + \lambda_{-N} Q_{ 1} ( \xi_{-N} \mM ) w
        + \mu_{-N} w + \gamma_{-N}\,,
        \\
        \mM & =
        \lambda_{-N} P_{1} ( \xi_{-N} \mM)
        +
        \lambda_{-N} ( P_{1} \xi_{-N})
        +
        \mu_{-N} \mM\,,
    \end{aligned}
\end{empheq}
describing the coarse grained behaviour of $ v_{-N}$ at scale $ L^{-N+1}$, as well as the
fluctuation behaviour $\mM$ that was lost when mapping from $ v_{-N}$ to $w$.

By formally expanding the quantities in the system \eqref{eq:sysRG_firststep} in the coupling constant $\lambda_{-N}$, we expect that to leading order $\mM $ is well
approximated by
\begin{equation*}
    \begin{aligned}
        \mM^{[1]}\eqdef \lambda_{-N} ( P_{1} \xi_{-N})\,,
    \end{aligned}
\end{equation*}
which forms the basis of the perturbative argument that is about to follow.
Here, the superscript indicates that we only approximate using terms in a single
power of $ \lambda_{N}$.
We then expect that up to lower--order corrections of order $\mO (
    \lambda_{-N}^{3}) $, $ w $ is well described
in terms of $ w^{[2]}$, which solves the
equation
\begin{equation}\label{e_pert_w1}
    \begin{aligned}
        ( - Q_{1} \Delta_{H}) w^{[2]}
        =
        \lambda_{-N}( Q_{1} \xi_{-N}) w^{[2]}
        + \lambda_{-N}^{2} Q_{ 1} ( \xi_{-N} P_{1}
        \xi_{-N}) w^{[2]}
        + \mu_{-N} w^{[2]} + \gamma_{-N}\,,
    \end{aligned}
\end{equation}
where we simply replaced $ \mM $ in \eqref{eq:sysRG_firststep} with $ \mM^{[1]}$.
In other words, we perform a perturbative expansion up to order $
    \lambda_{-N}^{2}$, while neglecting all remaining terms that are of order $\mO
    ( \lambda_{-N}^{3}) $.\\

We have found an approximate candidate equation for $ w$ which can
now be scaled into an effective equation on $ \Lambda^{N-1}$ using $ v_{-N+1}^{[2]}\eqdef L^{- \alpha} S_{1}
    w^{[2]}$:
\begin{equs}\label{e_v1_eff_wFluc}
    ( - \Delta_{H}) v_{-N+1}^{[2]}
    &=
    L^{2- \alpha}S_{1}\Big(
    \big(
    \lambda_{-N}( Q_{1} \xi_{-N})
    + \lambda_{-N}^{2} Q_{ 1} ( \xi_{-N} P_{1}
    \xi_{-N})
    + \mu_{-N}\big) w^{[2]}
    + \gamma_{-N}\Big)
    \\
    & =
    \big(
    L^{\alpha}\lambda_{-N} \xi_{-N+1}
    + L^{2}\lambda_{-N}^{2}  S_{1}
    Q_{ 1} ( \xi_{-N} P_{1}
    \xi_{-N})
    + L^{2}\mu_{-N}  \big)  v_{-N+1}^{[2]}
    + L^{2- \alpha}\gamma_{-N} \,,
\end{equs}
where we used the definition of $ \xi_{-N+1}$ in
\eqref{e_def_white_noise_unit2}, the fact that
$ S_{1}(f \cdot g) =( S_{1}f ) ( S_{1} g) $ for all $f, g \in \mC (
    \Lambda^{N})$, and moreover that for every $ f \in
    \Lambda^{N}_{1}$
    \begin{equation*}
    \begin{aligned}
        S_{1}Q_{1}^{(N)}(- \Delta_{H,N}) f =
        \sum_{n =2}^{N} L^{-2(n-1)} S_{1} P_{n}^{(N)} f
        =
        L^{-2}
        \sum_{n =1}^{N-1} L^{-2(n-1)} P_{n}^{(N-1)} S_{1}f  = L^{-2} (-\Delta_{H, N-1})
        S_{1}f \,,
    \end{aligned}
\end{equation*}
which gives rise to the extra factor $ L^{2}$.
Next, we decompose the second inhomogeneous chaos term on the right--hand side
of \eqref{e_v1_eff_wFluc}
into
\begin{equation}\label{eq_vN-1_supp}
    \begin{aligned}
        L^{2}\lambda_{-N}^{2}  S_{1}
        Q_{ 1} ( \xi_{-N} P_{1}
        \xi_{-N})
         & =
        L^{2\alpha}\lambda_{-N}^{2}
        \tOne_{\! \!-N+1}
        +
        L^{2}\lambda_{-N}^{2}
        \BF E\big[
            S_{1}
            Q_{ 1} ( \xi_{-N} P_{1}
            \xi_{-N})
        \big] \\
         & =
        L^{2\alpha}\lambda_{-N}^{2}
        \tOne_{\! \!-N+1}
        +
        L^{2} \lambda_{-N}^{2} ( 1- L^{-d}) \,,
    \end{aligned}
\end{equation}
where we used \eqref{eq_def_db} and Lemma~\ref{lem_expect_db}. Hence,
\eqref{e_v1_eff_wFluc} can be brought into
the form \eqref{e_v_n}, such that $ v_{-N+1}$ approximately solves the
equation
\begin{equation}\label{eq_vN-1}
    \begin{aligned}
        (- \Delta_{H}) v_{-N+1}^{[2]}
        =
        \big(\lambda_{-N+1} \xi_{-N+1}
        + \lambda_{-N+1}^{2} \tOne_{\! \! -N+1}
        + \mu_{-N+1}\big) v_{-N+1}^{[2]}
        + \gamma_{-N+1}\,,
    \end{aligned}
\end{equation}
with
\begin{empheq}[left=\empheqlbrace]{alignat=2}
    \begin{aligned}\nonumber
        \lambda_{-N+1} & = L^{\alpha} \lambda_{-N}\,,   \\
        \mu_{-N+1}     & = L^{2} \big(\mu_{-N}
        + (1- L^{-d}) \lambda_{-N}^{2}
        \big) \,,                                       \\
        \gamma_{-N+1}  & = L^{2- \alpha} \gamma_{-N}\,,\\
	\tOne_{\! \!-N+1} & = L^{2 - 2 \alpha}S_{1}
        Q_{ 1} ( \xi_{-N}\diamond P_{1} \xi_{-N})\,.
    \end{aligned}
\end{empheq}
In other words, the coefficients of the effective equation for $ v_{-N+1}^{[2]}$ can
be entirely expressed in terms of the coefficients of the equation for $
    v_{-N}$, on the lattice $ \Lambda^{N}$.\\

Thus far, we have only performed the first step of the renormalisation group
procedure. In the same manner, we can now go from $ v_{-N+1}^{[2]}$ to a $
    v_{-N+2}^{[2]}$.
The only difference to the first perturbative step is that we have created
a new noise term, namely $ \tOne_{\!\! -(N+1)}$, which in turn creates new
terms of order $ \mO ( \lambda_{-N+1}^{3})$. Again, we may ignore
those due to our current perturbative perspective.
More generally, this algorithm allows to map $
    v_{-n}^{[2]}$ to $ v_{-n+1}^{[2]}$, for any fixed $n \leqslant N$. 
The flow of force coefficients in the effective equation 
\begin{equation*}
	\begin{aligned}
		(- \Delta_{H}) v_{-n}^{[2]}
        =
        \big(\lambda_{-n} \xi_{-n}
        + \lambda_{-n}^{2} \tOne_{\! \! -n}
        + \mu_{-n}\big) v_{-n}^{[2]}
        + \gamma_{-n}\,,
	\end{aligned}
\end{equation*}is then expressed in terms of the dynamical system 
\begin{empheq}[left=\empheqlbrace]{alignat=2}
    \begin{aligned}\label{eq:sys_a}
        \lambda_{-n+1}    & = L^{\alpha} \lambda_{-n}\,,                        \\
        \mu_{-n+1}        & = L^{2} \big(\mu_{-n}
        + (1- L^{-d}) \lambda_{-n}^{2}
        \big) \,,                                                               \\
        \gamma_{-n+1}     & = L^{2- \alpha} \gamma_{-n}\,,                      \\
        \tOne_{\!\! -n+1} & = L^{2- 2 \alpha}\big( S_{1} Q_{1}\tOne_{\!\! -n} +
        S_{1}
        Q_{ 1} ( \xi_{-n} \diamond P_{1}
        \xi_{-n})\big)
        \,.
    \end{aligned}
\end{empheq}
Notice that \eqref{eq:sys_a} provides an alternative, recursive representation of $ \tOne$, which indeed
agrees with \eqref{eq_def_db}, see Lemma~\ref{lem_db_flow_same} below.\\

Overall, we have derived a recursive characterisation of the effective force coefficients
in \eqref{e_v_n}, albeit perturbative at this stage.
We recall from Section~\ref{sec_effforce} that $\lambda=(\lambda_{-n})_{ n\leqslant N}$, $\mu=(\mu_{-n})_{
            n\leqslant N}$ and $\gamma=(\gamma_{-n})_{ n\leqslant N}$
are (deterministically) relevant quantities, since they increase along the flow.
To make sure that these relevant terms stay bounded along the RG trajectory, it
is necessary to solve their flows backwards, by prescribing a finite value at $n=0$.
Hence, we enforce that the renormalised couplings are given in terms of
\begin{equation*}
	\begin{aligned}
    \lambda_0=g\in(0,1]\,,\quad \mu_0=r\geqslant1\,,\quad\text{and}\quad\gamma_0=1\,.		
	\end{aligned}
\end{equation*}
Solving then the flow equation \eqref{eq:sys_a} yields
\begin{equation}\label{eq_pert_para}
    \begin{aligned}
        \lambda_{-n} = L^{- \alpha n} g\,, \quad\mu_{-n} = L^{- 2n} \big( r - (1- L^{-d}) n \,g^{2} \big)\quad \text{and } \quad
        \gamma_{-n} = L^{- (2 - \alpha) n} \,,
    \end{aligned}
\end{equation}
which is consistent with the bare equation \eqref{e_anderson_mol}. In
particular, we recover the bare mass $ r_{N} = L^{2N} \mu_{-N}$.

On the other hand, the probabilistically irrelevant term $(\tOne_{\!\! - n})_{n \leqslant N}$
will be created by the flow, and can be proven to remain bounded with high probability. In fact, we will show
that $ \tOne_{\!\! - n}^{(N)}$ converges in the UV limit $N \to \infty$. 
In turn, convergence of both the deterministic and random force coefficients allows us to
conclude convergence of $ v_{-n}^{(N),[2]}$, as $ N \to \infty$.
We thus formally recover the statement of Proposition~\ref{prop_main} with $ \Psi_{-n}^{\star} \equiv 0$.
The remainder of the paper makes this argument rigorous and provides all the necessary details,
in particular the control of the remainder term.\\

We conclude the section by showing that the recursive expression of $ \tOne $
is compatible with~\eqref{eq_def_db}.

\begin{lemma}\label{lem_db_flow_same}
    For every $N \in \NN$, and $ n \leqslant N $, we have
    \begin{equs}\label{eq:reccov2}
	    \tOne_{\! \!-n}^{(N)}=\sum_{k=1}^{N-n}L^{(2-2\alpha)k}S_kQ_k(\xi_{-n-k}\diamond P_1\xi_{-n-k})\,.
    \end{equs}

\end{lemma}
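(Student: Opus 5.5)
The identity should follow by expanding the fluctuation propagator in the definition \eqref{eq_def_db} and collapsing each scale contribution onto the unit lattice using Lemma~\ref{lem:Qtwice}. By \eqref{eq_def_Gamma}, $\Gamma_{N-n}=\sum_{j=1}^{N-n}L^{2(j-1)}P_j$. Since the Wick product is bilinear, this gives
\begin{equs}
\tOne^{(N)}_{\!\!-n}=\sum_{j=1}^{N-n}L^{(2-2\alpha)(N-n)+2(j-1)}\,S_{N-n}Q_{N-n}\big(\xi_{-N}\diamond P_j\xi_{-N}\big)\,.
\end{equs}
The goal is to identify each summand with the $k$-th term of \eqref{eq:reccov2}, where $k\eqdef N-n-j+1$. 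As $j$ runs over $1,\dots,N-n$, the index $k$ runs over $N-n,\dots,1$, so the index ranges match.

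First, I factor the operators using Lemma~\ref{lem:Qtwice}(a):
\begin{equs}
S_{N-n}Q^{(N)}_{N-n}=S_kQ^{(N-j+1)}_k\,S_{j-1}Q^{(N)}_{j-1}\,.
\end{equs}
Next, I use that $P_j\xi_{-N}$ lies in $\mC(\Lambda^N_{j-1})$, i.e.\ it is constant on squares $\bigsquare_{j-1}$. As in \eqref{e_useful_w1}, this gives
\begin{equs}
Q_{j-1}\big(\xi_{-N}\,P_j\xi_{-N}\big)=(Q_{j-1}\xi_{-N})\,P_j\xi_{-N}\,.
\end{equs}
Then $S_{j-1}$ is multiplicative, and by Lemma~\ref{lem:Qtwice}(b), $S_{j-1}P^{(N)}_j=P^{(N-j+1)}_1S_{j-1}Q^{(N)}_{j-1}$. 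By \eqref{e_def_white_noise_unit2}, $S_{j-1}Q_{j-1}\xi_{-N}=L^{-(2-\alpha)(j-1)}\xi_{-m}$ with $m\eqdef N-j+1=n+k$. Combining these,
\begin{equs}
S_{j-1}Q_{j-1}\big(\xi_{-N}P_j\xi_{-N}\big)=L^{-2(2-\alpha)(j-1)}\,\xi_{-n-k}\,P_1\xi_{-n-k}\,.
\end{equs}

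The Wick product has to be carried through this computation. I would argue that $\xi\diamond\eta=\xi\eta-\EE[\xi\eta]$, and that all operators involved ($S$, $Q$, $P$) are deterministic and linear, so they commute with $\EE$. Hence the same identity holds with products replaced by Wick products on both sides.

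Finally, the exponents must match. The power of $L$ in front of the $j$-th term is
\begin{equs}
(2-2\alpha)(N-n)+2(j-1)-2(2-\alpha)(j-1)=(2-2\alpha)(N-n-j+1)=(2-2\alpha)k\,,
\end{equs}
which is the prefactor in \eqref{eq:reccov2}, and this closes the argument.

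The main obstacle I expect is the bookkeeping rather than any analysis. The superscripts $(N)$, $(N-j+1)$ on $Q$ and $P$ must be tracked carefully so that every operator acts on the correct lattice. The endpoint case $j=1$ also needs care, since there $Q_0$ is the identity and $S_0$ is trivial. If the direct computation becomes messy, I would fall back on induction over $n$ from $n=N$ (where both sides vanish) using the recursion \eqref{eq:sys_a}. The inductive step then reduces to showing $L^{2-2\alpha}S_1Q_1$ applied to the $k$-th term at level $-n$ gives the $(k+1)$-th term at level $-n+1$, which is just Lemma~\ref{lem:Qtwice}(a) with $n=1$.
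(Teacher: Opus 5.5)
Your argument is correct, including how you pass the Wick renormalisation through the deterministic linear operators and the exponent count. It differs from the paper's proof only in organisation.

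You expand $\Gamma_{N-n}$ completely and collapse every scale $j$ at once. The paper instead derives the one-step recursion \eqref{eq:sys_a} directly from the definition \eqref{eq_def_db}:
\begin{itemize}
\item it writes $\Gamma_{N-n+1}=\Gamma_{N-n}+L^{2(N-n)}P_{N-n+1}$ and $S_{N-n+1}Q_{N-n+1}=S_1Q_1S_{N-n}Q_{N-n}$;
\item the $\Gamma_{N-n}$ part then gives $L^{2-2\alpha}S_1Q_1$ applied to the level-$(-n)$ object;
\item the new top-scale term is collapsed by exactly your per-term computation with $j-1=N-n$;
\item the closed form follows by unrolling from the zero value at $n=N$.
\end{itemize}

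Your route gives \eqref{eq:reccov2} in one pass without induction, and the recursion can be read off afterwards via Lemma~\ref{lem:Qtwice}(a). The paper's route makes the recursion itself the main output. That is what the surrounding text needs, to identify the regrouped RG flow with the definition \eqref{eq_def_db}.

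Your fallback, as phrased, would be circular. Inducting with \eqref{eq:sys_a} presupposes that \eqref{eq_def_db} obeys that recursion, and proving this is precisely the paper's splitting computation. Checking that the right-hand side of \eqref{eq:reccov2} obeys the recursion is only the other half. Your main argument does not rely on the fallback, so it stands on its own.
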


\begin{proof}
    We verify the recursion relation using the definition of $\tOne $ from
    \eqref{eq_def_db}. First, we notice that
    \begin{equation*}
	    \begin{aligned}
		   {\tOne}_{\!\! -n+1}                                                               
	     &= L^{(2- 2\alpha) (N-n+1)}
            S_{N-n+1}Q_{ N-n+1} ( \xi_{-N}\diamond \Gamma_{N-n+1}
            \xi_{-N})                                                                            \\
             & =L^{(2- 2\alpha)}L^{(2- 2\alpha) (N-n)}
            S_1Q_1 S_{N-n}Q_{ N-n} \big( \xi_{-N}\diamond (\Gamma_{N-n}+L^{2(N-n)} P_{N-n+1})
            \xi_{-N}\big)  \,,
	    \end{aligned}
    \end{equation*}
where we used Lemma~\ref{lem:Qtwice}~(a), and the definition of
$\Gamma_{N-n+1}$ \eqref{eq_def_Gamma}. 
The first term on the right--hand side recovers ${\tOne}_{\!\! -n}$. 
Next,
we apply Lemma~\ref{lem:Qtwice}~(b) which yields
    \begin{equation*}
	    \begin{aligned}
		     {\tOne}_{\!\! -n+1}                                                               
		    & =L^{(2- 2\alpha)}S_1Q_1{\tOne}_{\!\! -n}+L^{(2- 2\alpha)}L^{(4- 2\alpha) (N-n)}
            S_1Q_1 S_{N-n}Q_{ N-n} ( \xi_{-N}\diamond S_{-N+n} P_{1}S_{N-n}Q_{N-n}
            \xi_{-N})
            \\
             & =L^{(2- 2\alpha)}S_1Q_1{\tOne}_{\!\! -n}+L^{(2- 2\alpha)}L^{(4- 2\alpha) (N-n)}
            S_1Q_1  ( S_{N-n}Q_{ N-n}\xi_{-N}\diamond  P_{1}S_{N-n}Q_{N-n}
            \xi_{-N})
            \\
             & =L^{(2- 2\alpha)}S_1Q_1{\tOne}_{\!\! -n}+L^{(2- 2\alpha)}
            S_1Q_1  ( \xi_{-n}\diamond  P_{1}
            \xi_{-n})\,.
	    \end{aligned}
    \end{equation*}
where in the second identity we used that $S_{-N+n}P_1S_{N-n}Q_{N-n}\xi_{-N}$ is already averaged at
scale $N-n$ and can factorise from $Q_{N-n}$. The last identity is a
consequence of  the definition of $\xi_{-n}$ \eqref{e_def_white_noise_unit2}.
 We finish the proof, by noticing that the recursion in \eqref{eq:sys_a} yields precisely the
    right--hand side of \eqref{eq:reccov2}.
   \end{proof}

\begin{remark}[A single large RG step]\label{rem_why_not_1jump}
    The reader may have spotted that the chain of coarse grainings can be
    performed in a single (large) step using $ v_{-n}=
        L^{ - \alpha (N-n)} S_{N-n}Q_{N-n} v_{-N} $
    by its definition in~\eqref{eq:defofv-n}:
    \begin{equation*}
        \begin{aligned}
            \begin{tikzpicture}[baseline=(current bounding box.center)]
                \node (vN) at (1,0.0) {$v_{-N}$};
                \node (vN1) at (2,1) {$v_{-N+1}$};
                \node (vN2) at (4,1) {$v_{-N+2}$};
                \node (vdots) at (6,1) {$\cdots$};
                \node (vn1) at (8,1) {$v_{-n-1}$};
                \node (vn) at (9,0.0) {$v_{-n}$};

                \draw[->] (vN) -- (vN1);
                \draw[->] (vN1) -- (vN2);
                \draw[->] (vN2) -- (vdots);
                \draw[->] (vdots) -- (vn1);
                \draw[->] (vn1) -- (vn);

                \draw[->] (vN) to (vn);
            \end{tikzpicture}
        \end{aligned}
    \end{equation*}
    Indeed, the same perturbative dynamical system \eqref{eq:sys_a}
    can be obtained by performing a single large RG step.
    However, we will leverage on the step--by--step procedure, which is central
    to the RG framework, when controlling the remainder term.
    This will be the focus of Section~\ref{sec_rig_RG}.
\end{remark}

\section{Uniform control of the RG flow}\label{sec_rig_RG}

In Section~\ref{sec_pert}, we derived perturbatively an effective equation for
$ v_{-n} = v_{-n}^{[2]}+ \mO ( \lambda_{-n}^{3})$, of the form
\begin{equation*}
    \begin{aligned}
        (- \Delta_{H}) v_{-n}^{[2]}
        =
        \big(\lambda_{-n} \xi_{-n}
        + \lambda_{-n}^{2} \tOne_{\! \! -n}
        + \mu_{-n}\big) v_{-n}^{[2]}
        + \gamma_{-n}\,.
    \end{aligned}
\end{equation*}
However, we entirely neglected control of the remainder term $
    \lambda_{-n}^{3} \Psi_{-n}=  \mO (
    \lambda_{-n}^{3})$.

The goal of the present section is to expand our perturbative analysis to
control this error.
We then establish the required stochastic estimates for the term $ \tOne$.
Finally, we present a fixed point argument which yields uniform (in $N$) control of the remainder
$ \Psi^{(N)}=(\Psi^{(N)}_{-n})_{0\leqslant n \leqslant N}$,
and consequently control of the trajectory $ v^{(N)}=(v^{(N)}_{-n})_{0\leqslant n \leqslant N}$.

\subsection{The dynamical system}\label{sec_rg_general}

In Section~\ref{sec_pert}, we only performed the first (perturbative) RG step,
and then deduced the dynamical system for the subsequent steps.
In the following, we will not treat the first step differently, instead we
present generally how to derive  the effective
equation for $v_{-n+1} = L^{-\alpha} S_{1} Q_{1} v_{-n}$ from the effective equation for $ v_{-n}$, for $1 \leqslant n \leqslant N$.

The basis of the derivation is that $
    v_{-n}= v_{-n}^{(N)}$ solves equation \eqref{e_v_n}:
\begin{equation*}
    \begin{aligned}
        (- \Delta_{H} ) v_{-n}
        = (\lambda_{-n} + \lambda_{-n}^{ 2} \tOne_{\!\! -n} + \lambda_{-n}^{3}
        \Psi_{-n}+ \mu_{-n}) v_{-n} + \gamma_{-n}\,,
    \end{aligned}
\end{equation*}
Clearly, this holds true when $n =N$, with $
    \tOne_{-N}, \Psi_{-N} \equiv 0$, cf. \eqref{eq_vN}.
As in Section~\ref{sec_pert}, we write $ v_{-n} = w_{-n+1} + z_{-n+1}$, where
\begin{equation*}
    \begin{aligned}
        w_{-n+1} \eqdef Q_{1} v_{-n}\,, \qquad \text{and} \qquad z_{-n +1} \eqdef P_{1}
        v_{-n}\,.
    \end{aligned}
\end{equation*}
Then, analogously to \eqref{eq_supp_w2}, we see that $ w_{-n+1}$ solves the
equation
\begin{equation}\label{q_supp_w3}
    \begin{aligned}
        ( - Q_{1} \Delta_{H}) w_{-n+1}
         & =
        \big(\lambda_{-n}  Q_{1} \xi_{-n} + \lambda_{-n}^{2} Q_{1} \tOne_{\!\! -n}
        +\lambda_{-n}^{3} Q_{1} \Psi_{-n} + \mu_{-n} \big) w_{-n+1}
        + \gamma_{-n} \\
         & \quad+
        \lambda_{-n} Q_{1}\big( \xi_{-n} z_{-n+1}\big)
        +
        \lambda_{-n}^{2} Q_{1}( \tOne_{\!\! -n} z_{-n+1})
        +
        \lambda_{-n}^{3} Q_{1}( \Psi_{-n} z_{-n+1})
        \,.
    \end{aligned}
\end{equation}
Furthermore, analogously to \eqref{eq_z}, $ z_{-n+1}$ solves the equation
\begin{equation}\label{eq_supp2_z}
    \begin{aligned}
        z_{-n +1}
         & =
        \big(\lambda_{-n}  P_{1} \xi_{-n} + \lambda_{-n}^{2} P_{1} \tOne_{\!\! -n}
        +\lambda_{-n}^{3} P_{1} \Psi_{-n} \big) w_{-n+1} \\
         & \quad+
        \lambda_{-n} P_{1}\big( \xi_{-n} z_{-n+1}\big)
        +
        \lambda_{-n}^{2} P_{1}( \tOne_{\!\! -n} z_{-n+1})
        +
        \lambda_{-n}^{3} P_{1}( \Psi_{-n} z_{-n+1})     + \mu_{-n} z_{-n+1}\,.
    \end{aligned}
\end{equation}
Here, we used again that $ Q_{1} z_{-n+1}=0$, $ P_{1} \gamma_{-n}=0$, and $ P_{1} w_{-n +1}=0$.
Once more, we notice that $ w \mapsto z_{-n+1}[w]$ is linear, and introduce the
random field $ \mM_{-n+1}$ such that $ z_{-n+1}= \mM_{-n+1} w_{-n+1}$.
Replacing $ z_{-n+1}$ with this new representation, \eqref{eq_supp2_z} implies
\begin{equation}\label{e_supp2_M}
    \begin{aligned}
        \mM_{-n +1}
         & =
        \lambda_{-n}  P_{1} \xi_{-n} + \lambda_{-n}^{2} P_{1} \tOne_{\!\! -n}
        +\lambda_{-n}^{3} P_{1} \Psi_{-n}             \\
         & \quad+
        \lambda_{-n} P_{1}\big( \xi_{-n} \mM_{-n+1}\big)
        +
        \lambda_{-n}^{2} P_{1}( \tOne_{\!\! -n} \mM_{-n+1})
        +
        \lambda_{-n}^{3} P_{1}( \Psi_{-n} \mM_{-n+1}) +\mu_{-n}  \mM_{-n+1}\,.
    \end{aligned}
\end{equation}
Combining \eqref{q_supp_w3} and \eqref{e_supp2_M} yields the non--perturbative
equivalent of the coupled system \eqref{eq:sysRG_firststep}.

Next, we proceed differently than in the perturbative derivation. We define the
remainder
\begin{equation}\label{eq_def_rho}
    \begin{aligned}
        \lambda_{-n}^{2} \mR_{-n+1}
        \eqdef
        \mM_{-n+1} - \lambda_{-n} P_{1} \xi_{-n}\,,
    \end{aligned}
\end{equation}
which we neglected in Section~\ref{sec_pert}. Combining \eqref{e_supp2_M} with the decomposition \eqref{eq_def_rho} and rearranging terms, we
derive the following equation for $ \mR_{-n+1}$:
\begin{equation}\label{e_supp1_rho}
    \begin{aligned}
        \mR_{-n+1}
         & =
        P_{1} \tOne_{\!\! -n}
        + P_{1}(  \xi_{-n} (P_{1} \xi_{-n}))                  \\
         & \quad
        +\lambda_{-n} P_{1} \Psi_{-n}
        +
        \lambda_{-n} P_{1}\big( \xi_{-n}\mR_{-n+1})
        + \lambda_{-n} P_{1 }(\tOne_{\!\! -n}  P_{1}\xi_{-n}) \\
         & \quad
        +
        \lambda_{-n}^{2} P_{1}(\tOne_{\!\! -n}  \mR_{-n+1})
        +
        \lambda_{-n}^{ 2}  P_{1}( \Psi_{-n}  P_{1} \xi_{-n})
        \\
         & \quad+
        \lambda_{-n}^{3}  P_{1}( \Psi_{-n} \mR_{-n+1})
        + \mu_{-n} \lambda_{-n}^{-1} P_{1} \xi_{-n}
        + \mu_{-n} \mR_{-n+1}\,.
    \end{aligned}
\end{equation}
Finally, inserting $ z_{-n+1}= (\lambda_{-n} P_{1} \xi_{-n} +
    \lambda^{2}_{-n} \mR_{-n+1}) w_{-n+1}$ into \eqref{q_supp_w3}, we can write the effective equation for
$ v_{-n+1}= L^{- \alpha} S_{1} w_{-n+1} $ in terms of
\begin{equation}\label{e_eff_vnplus}
    \begin{aligned}
        (- \Delta_{H} ) v_{-n+1}
        = (\lambda_{-n+1} \xi_{-n+1} + \lambda_{-n+1}^{ 2} \tOne_{\!\! -n+1} + \lambda_{-n+1}^{3}
        \Psi_{-n+1}+ \mu_{-n+1}) v_{-n+1} + \gamma_{-n+1}\,,
    \end{aligned}
\end{equation}
with the following regrouping of terms
\begin{empheq}[left=\empheqlbrace]{alignat=2}
    \begin{aligned}\label{e_rg_system}
        \lambda_{-n+1}    & = L^{\alpha} \lambda_{-n}\,,                                             \\
        \mu_{-n+1}        & = L^{2} \big(\mu_{-n}
        + (1- L^{-d}) \lambda_{-n}^{2}
        \big) \,,                                                                                    \\
        \gamma_{-n+1}     & = L^{2- \alpha} \gamma_{-n}\,,                                           \\
  \xi_{-n+1}&=L^{2-\alpha}S_1Q_1\xi_{-n}\,,
        \\
        \tOne_{\!\! -n+1} & = L^{2- 2 \alpha}\big( S_{1} Q_{1}\tOne_{\!\! -n} +
        S_{1}
        Q_{ 1} ( \xi_{-n} \diamond P_{1}
        \xi_{-n})\big)\,,                                                                            \\
        \Psi_{-n+1}
                          & = L^{2- 3 \alpha} \big(
        S_{1}Q_{1} \Psi_{-n}
        +
        S_{1}Q_{1}( \xi_{-n} \mR_{-n+1})
        + S_{1}Q_{1}( \tOne_{\!\! -n} P_{1} \xi_{-n})                                                \\
                          & \hspace{1.6cm}  + \lambda_{-n} S_{1} Q_{1}(\tOne_{\!\! -n} \mR_{-n+1}) +
        \lambda_{-n} S_{1}Q_{1}( \Psi_{-n} P_{1} \xi_{-n})                                           \\
                          & \hspace{1.6cm} + \lambda_{-n}^{2} S_{1} Q_{1}( \Psi_{-n} \mR_{-n+1})
        \big)
        \,.
    \end{aligned}
\end{empheq}
Notice that the above dynamical system is an extension of the one we derived
perturbatively in
\eqref{eq:sys_a}, now incorporating the higher-order error term represented by
$\Psi$. 
Moreover, we observe that the linear evolution of the remainder term $\Psi$ is given
by multiplication with $L^{2-3\alpha}=L^{-1}$. 
This renders the heuristic discussion in Section~\ref{sec_pert} precise and confirms that
$\Psi$ is indeed deterministically irrelevant.
As will be shown in the next section, this stands in clear contrast to the term $\tOne$,
which is only  probabilistically irrelevant. Indeed, given $(\xi,\tOne)$, the
good prefactor $L^{2- 3 \alpha}$ allows us to construct $\Psi$ by deterministic means.

\begin{remark}[Why the unit lattice?]\label{rem_whyunit2}
    Let us return to the question why we view the evolution of scales on the unit
    lattice, rather than the equivalent trajectory $(u^{(N)}_{n})_{0\leqslant
    n\leqslant N}$ of effective solutions defined directly on lattices $\Lambda_{-n}
    \subset \TT^{d}$, with $u_n^{(N)}=\underline Q{}^{(N)}_{-n}u_N$.

     Notice that the system~\eqref{e_rg_system} is particularly transparent, as the spectrum of its linearisation reflects directly the regularity properties of the terms involved.
     It particular, it is immediate from \eqref{e_rg_system} that the remainder $\Psi$
     contracts along the flow, since the linearisation of its evolution is governed by
     the factor $L^{2-3\alpha}<1$. 
    In contrast, if we were to work with the trajectory $(u^{(N)}_{n})_{0\leqslant
    n\leqslant N}$, the situation would be different. In that case, the spectrum of the
    linearisation of the RG would be trivial, and the scaling properties of the different
    terms would be hidden within their definitions (see Remark~\ref{rem:unitlattices} for a discussion of this phenomenon for the evolution of the noise). 
    In other words, rescaling to the unit lattice at each step allows for direct
    comparison of the different quantities.
\end{remark}

\subsection{Stochastic estimates}\label{sec_stoch_est}

In this section, we prove the stochastic estimate of the enhanced noise $ (
    \xi^{(N)},\tOne^{(N)}) = \big( \xi^{(N)}_{-n }, \tOne_{\!
        \!-n}^{(N)}\big)_{ n \leqslant N}$.
Namely, we prove the following lemma:

\begin{lemma}[Stochastic estimate]\label{lem_stoch}
    For every $p \in (1, \infty) $, it holds that
    \begin{equs}\label{eq:L^pcoupling}
        \BF E \big[\Norm{\xi,\tOne}^p\big]
        =
        \BF E \Big[ \sup_{N \in \NN} \nnorm{\xi^{(N)},\tOne^{(N)}}{}^p\Big]\lesssim
        p^{p/2} \,,
    \end{equs}
    with an implicit constant that is independent of $p$.
\end{lemma}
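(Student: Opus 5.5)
The plan is to split the norm into its two components and treat them separately. Since $\xi^{(N)}_{-n}=\xi_{-n}$ for $n\leqslant N$ and vanishes otherwise, the noise part satisfies $\sup_N \sup_n L^{-\kappa_\rms n}\sup_x|\xi^{(N)}_{-n}(x)|\leqslant \sup_n L^{-\kappa_\rms n}\sup_x|\xi_{-n}(x)|$, which is controlled in $L^p$ by $p^{p/2}$ directly by Lemma~\ref{lem_white_noise}. The main work concerns $\sup_N\sup_n L^{-\kappa_\rms n}\sup_x|\tOne^{(N)}_{\!\!-n}(x)|^{1/2}$. Since it suffices to bound the $p/2$-th moment of $\sup_N\sup_n L^{-2\kappa_\rms n}\sup_x |\tOne^{(N)}_{\!\!-n}(x)|$ by $p^{p/2}$, and $\tOne$ lives in the second chaos, hypercontractivity gives $\EE|\tOne(x)|^{q}\leqslant (q-1)^{q}\EE[\tOne(x)^2]^{q/2}$, so with $q=p/2$ we get exactly the $p^{p/2}$ scaling. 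We may restrict to large $p$ and extend to all $p$ by Jensen/Markov as in Lemma~\ref{lem_white_noise}.

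The key second-moment estimate uses the representation of Lemma~\ref{lem_db_flow_same}: $\tOne^{(N)}_{\!\!-n}(x)=\sum_{k=1}^{N-n}L^{(2-2\alpha)k}S_kQ_k(\xi_{-n-k}\diamond P_1\xi_{-n-k})(x)$, with $\alpha=1$, so the prefactor is $L^{0}=1$. For fixed $k$, the term $S_kQ_k(\xi_{-n-k}\diamond P_1\xi_{-n-k})(x)$ is an average over $L^{dk}$ points $y$ of the Wick products $\xi_{-n-k}(y)\diamond P_1\xi_{-n-k}(y)$, where these products are independent across distinct unit blocks $\bigsquare_1$, and within a block have covariances bounded by constants. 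Hence its variance is $\lesssim L^{-dk}$ (the entropic gain from the heuristic in Section~\ref{sec_effforce}). Terms with different $k$ are at different scales; by Wick's theorem their covariance involves $\EE[\xi_{-m}(y)\xi_{-m'}(y')]$, which I would compute via \eqref{e_def_white_noise_unit2} and bound by the same decaying factor, or more simply use Minkowski: $\EE[\tOne^{(N)}_{\!\!-n}(x)^2]^{1/2}\leqslant\sum_{k\geqslant1}CL^{-dk/2}\lesssim1$ uniformly in $N,n,x$. This is the only genuine computation, and I expect the bookkeeping of the Wick contractions within a block (the $P_1$ correlates points in the same block, and one must check nothing of order one survives the averaging) to be the main obstacle, though it is routine.

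Finally, the supremum over $N$ must be handled. I would bound $\sup_N$ by a sum, but the sum over $N$ diverges, so instead note that $\tOne^{(N)}_{\!\!-n}-\tOne^{(N-1)}_{\!\!-n}$ is the single $k=N-n$ term, whose variance is $\lesssim L^{-d(N-n)}$. Writing $\sup_N|\tOne^{(N)}_{\!\!-n}(x)|\leqslant\sum_{k\geqslant1}|S_kQ_k(\xi_{-n-k}\diamond P_1\xi_{-n-k})(x)|$ and using a crude union bound with hypercontractivity, we get
\begin{equs}
\EE\Big[\sup_{n}L^{-\kappa_\rms p n}\sup_x\Big(\sum_k |T_{n,k}(x)|\Big)^{p/2}\Big]\lesssim p^{p/2}\sum_{n}L^{-\kappa_\rms pn}L^{dn}\Big(\sum_k L^{-dk/2}\Big)^{p/2}\,,
\end{equs}
which converges for $p$ large (after Minkowski in $L^{p/2}$ for the sum over $k$), finishing the proof.
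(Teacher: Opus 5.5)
Your proposal is correct and is essentially the paper's proof. The increments $\tOne^{(K)}_{\!\!-n}-\tOne^{(K-1)}_{\!\!-n}$ that the paper telescopes over are exactly your $N$-independent terms $T_{n,K-n}$, and the paper likewise replaces $\sup_N$, $\max_n$ and $\sup_x$ by sums, applies second-chaos hypercontractivity to each increment, and uses the variance $2(1-L^{-d})L^{-2\alpha(K-n)}$. The only difference is cosmetic: the paper computes this variance exactly, by replacing $\xi\diamond P_1\xi$ with $(P_1\xi)^{\diamond 2}$ under $Q_{K-n}$ and invoking Corollary~\ref{cor_cov_P1db}, while your cruder within-block covariance bound loses only an $L$-dependent constant, which is harmless since $L$ is fixed.
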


In fact, we control $( \xi, \tOne)$ in the full subcritical regime $ \alpha>0$, which includes $
    d=2,3$. Therefore, we keep the variable $ \alpha$ explicit throughout the
proof.

\begin{proof}
	We begin by noticing from \eqref{eq_unif_stochnorm} that
    \begin{equation*}
        \begin{aligned}
            \Norm{  \xi,\tOne }
            = \| \xi \|_{\CB C^{- 2+\alpha - \kappa_{\rms}} (
            \TT^{d})} \vee
            \sup_{N\in\NN} \max_{0\leqslant n\leqslant N} L^{-\kappa_{\rms}
                    n}\sup_{x\in\Lambda^n} | \tOne^{(N)}_{\! \!-n}(x) |^{1/2}\,.
        \end{aligned}
    \end{equation*}
    In Lemma~\ref{lem_white_noise}, we already proved $\EE \big[
        \| \xi \|_{\CB C^{- 2+\alpha - \kappa_{\rms}} (
        \TT^{d})}^{ p}
        \big]
	\lesssim p^{p/2} $, for all $ p \in (1, \infty) $.
    Therefore, it is only left to control $\tOne$, and show that 
    (while writing $ \kappa= 2\kappa_{\rms}$)
    \begin{equs}\label{eq:Kolmo}
        \begin{aligned}
            \EE\Big[ & \Big(\sup_{N\in\NN}\max_{0\leqslant n\leqslant N}L^{-\kappa
                    n}\sup_{x\in\Lambda^n}| \tOne^{(N)}_{\! \!-n}(x)
    |\Big)^p\Big]\lesssim p^{p}\,.
        \end{aligned}
    \end{equs}
 In the following, it will be convenient to use the shorthand $ \| \tOne_{\!\!
            -n}^{(N)}\|_{\infty} \eqdef \sup_{x \in \Lambda^{n}}
        | \tOne_{\!\! -n}^{(N)}(x)|$.
	
    First, recalling that $ \tOne_{\!\! -n}^{(K)} =0$ whenever $ K <0$ or $ n \geqslant
    K$, we have the telescope representation
    \begin{equs}
        \tOne^{(N)}_{\! \!-n} =\sum_{K=n}^N  \big( \tOne_{\! \!-n}^{(K)}-
        \tOne_{\! \!-n}^{(K-1)}\big)\,.
    \end{equs}
       We then crudely estimate the maximum over $n$ with a sum, which yields the upper bound
    \begin{equs}\label{eq_supp1_dbdiff}
        \max_{0\leqslant n \leqslant N}   L^{-\kappa n} \|\tOne^{(N)}_{\! \!-n} \|_\infty&\leqslant \sum_{n=0}^NL^{-\kappa n}
        \sum_{K=n}^N \| \tOne_{\! \!-n}^{(K)}-
        \tOne_{\! \!-n}^{(K-1)}\|_\infty\leqslant\sum_{K=0}^\infty\sum_{n=0}^K L^{-\kappa n}
        \| \tOne_{\! \!-n}^{(K)}-
        \tOne_{\! \!-n}^{(K-1)}\|_\infty\,,\qquad
    \end{equs}
    uniformly in $N$.
    Next, by inserting \eqref{eq_supp1_dbdiff} into \eqref{eq:Kolmo},
    we see that
    \begin{equation}\label{eq_supp_stochest_first}
        \begin{aligned}
		\Big\|\sup_{N\in\NN}\max_{0\leqslant n\leqslant N} L^{- \kappa n}\|\tOne^{(N)}_{\! \!-n} \|_\infty\Big\|_{L^p(\Omega)} & \leqslant
            \sum_{K=0}^\infty\sum_{n=0}^KL^{-\kappa n}\Big\|\big\|\tOne_{\! \!-n}^{(K)}-
            \tOne_{\! \!-n}^{(K-1)}\big\|_\infty\Big\|_{L^p(\Omega)}                                                                                                                                                             \\
                                                                                                                    & =    \sum_{K=0}^\infty\sum_{n=0}^KL^{-\kappa n}\Big\| \sup_{x \in \Lambda^n}\big| \tOne_{\! \!-n}^{(K)}-
            \tOne_{\! \!-n}^{(K-1)}(x)\big|^p \Big\|^{1/p}_{L^1(\Omega)}
            \\
                                                                                                                    & \leqslant    \sum_{K=0}^\infty\sum_{n=0}^KL^{-\kappa
            n}\Big\|\sum_{x\in\Lambda^n}\big|\tOne_{\! \!-n}^{(K)}-
            \tOne_{\! \!-n}^{(K-1)}\big|^p(x)\Big\|^{1/p}_{L^1(\Omega)}                                                                                                                                                          \\
                                                                                                                    & =   \sum_{K=0}^\infty\sum_{n=0}^KL^{-\kappa n}\Big(\sum_{x\in\Lambda^n}\EE\big[\big|\tOne_{\! \!-n}^{(K)}-
            \tOne_{\! \!-n}^{(K-1)}\big|^p(x)\big]\Big)^{1/p}                                                                                                                                                                    \\
                                                                                                                    & =
            \sum_{K=0}^\infty\sum_{n=0}^KL^{(d/p-\kappa )n}\EE\big[\big|\tOne_{\! \!-n}^{(K)}-
                \tOne_{\! \!-n}^{(K-1)}\big|^p(0)\big]^{1/p}\,,
        \end{aligned}
    \end{equation}
    where we first applied the triangle inequality, and then estimated $
        \sup_{x \in \Lambda^{n}}$ by the sum over $ \Lambda^{n} $, lastly, we used
    spatial invariance in law.
    Then, by means of Gaussian hypercontractivity, we arrive at
    \begin{equation}\label{eq:Kolmoinproof}
        \begin{aligned}
            \Big\|\sup_{N\in\NN}\max_{0\leqslant n\leqslant N} L^{- \kappa n}\|\tOne^{(N)}_{\! \!-n} \|_\infty\Big\|_{L^p(\Omega)}
             & \lesssim  p \sum_{K=0}^\infty\sum_{n=0}^KL^{(d/p-\kappa )n}\EE\big[\big(\tOne_{\! \!-n}^{(K)}-
                \tOne_{\! \!-n}^{(K-1)}\big)^2(0)\big]^{1/2}\,.
        \end{aligned}
    \end{equation}
    Thus, to conclude the proof, we only need to perform a covariance computation.
    We start by noticing that by \eqref{eq:reccov2},
    \begin{equs}
        \tOne_{\! \!-n}^{(K)}
        -
        \tOne_{\! \!-n}^{(K-1)}
        =L^{(2-2\alpha)(K-n)}
        S_{K-n}Q_{K-n}
        \big( ( P_{1} \xi_{-K})^{\diamond 2}\big)
        \,,
    \end{equs}
    where we furthermore used
    the fact that $Q_{K-n}
        \big( Q_{1} \xi_{-K}\diamond P_{1} \xi_{-K}\big)=0$, which allowed us to replace $
        \xi_{-K}\diamond P_{1} \xi_{-K}$ with $ ( P_{1} \xi_{-K})^{\diamond 2}$ in the
    expression above.
    The scaling by $ S_{K-n}$ does not affect the variance
    computation, since it only rescales the reference lattice.
    Hence,
    \begin{equation*}
        \begin{aligned}
            \EE\big[\big(\tOne_{\! \!-n}^{(K)}-
                \tOne_{\! \!-n}^{(K-1)}\big)^2(0)\big]
             & =
            L^{(4-4\alpha-2d)(K-n)} \sum_{x,y\in \Lambda^{K-n}}
            \EE[( P_{1}
                \xi_{-K})^{\diamond 2}(x)( P_{1} \xi_{-K})^{\diamond 2}(y)]\,.
        \end{aligned}
    \end{equation*}
    Therefore, using Corollary~\ref{cor_cov_P1db}, we compute
    \begin{equs}
        \EE\big[\big(\tOne_{\! \!-n}^{(K)}-
            \tOne_{\! \!-n}^{(K-1)}\big)^2(0)\big]
        &=2L^{(4-4\alpha-2d)(K-n)} \sum_{x\in\Lambda^{K-n}} \sum_{y\in\bigsquare_1(x)}
        ( \delta_{x,y} - L^{-d})^{2}
        \\
        &=2L^{(4-4\alpha-2d)(K-n)} \sum_{x\in\Lambda^{K-n}} \big(1-2L^{-d}+L^{-d}\big)\,,
    \end{equs}
    and using $d=4-2\alpha$ and $
        \kappa< \alpha$, we obtain the upper bound
	\begin{equs}\label{eq_supp2_db_diff}
        \EE\big[\big(\tOne_{\! \!-n}^{(K)}-
            \tOne_{\! \!-n}^{(K-1)}\big)^2(0)\big] &=2(1-L^{-d}) L^{-2 \alpha(K-n)}
        \leqslant 2L^{-\kappa(K-n)}\,.
    \end{equs}
    Finally, inserting \eqref{eq_supp2_db_diff} into \eqref{eq:Kolmoinproof} yields
    \begin{equation*}
        \begin{aligned}
            \Big\|\sup_{N\in\NN}\max_{0\leqslant n\leqslant N} L^{- \kappa n}\|\tOne^{(N)}_{\! \!-n} \|_\infty\Big\|_{L^p(\Omega)}
             & \lesssim p \sum_{K=0}^\infty L^{- \kappa/2 K} \sum_{n=0}^K L^{(d/p-\kappa/2
                    )n}\,.
        \end{aligned}
    \end{equation*}
    The right-hand side is summable, provided $p>2d/\kappa$, which concludes
    \eqref{eq:Kolmo}.
    The extension to all $p \in (1, \infty) $ is then a consequence
    of Markov's inequality.
\end{proof}

Having the stochastic bound from Lemma~\ref{lem_stoch} at hand, we can prove
the desired tail estimate.

\begin{proof}[Proof of Lemma~\ref{lem_tailprob}]
    Let $ g \in (0,1]$.
    We apply Chebychev's exponential inequality, which yields for every $ c >0 $
    \begin{equation*}
        \begin{aligned}
            \PP \big( \Omega_{g}^{c} \big)
            \leqslant e^{- 4 c g^{-2}} \EE\Big[
                e^{ c \Norm{ \xi, \tOne}{}^{2}}
                \Big]\,.
        \end{aligned}
    \end{equation*}
    Next, using Fubini's theorem and Lemma~\ref{lem_stoch} (recall that the
    implicit constant is independent of $p$)
    \begin{equation*}
        \begin{aligned}
            \EE\Big[ e^{ c \Norm{ \xi, \tOne}{}^{2}} \Big]
            \lesssim
            \sum_{p = 0}^{\infty} \frac{  p^{p}}{p !} (2c)^{p}
            \lesssim
            \sum_{p = 0}^{\infty} (2c\cdot e)^{p}\,,
        \end{aligned}
    \end{equation*}
    where we used Stirling's approximation in the last inequality.
    The series converges, provided we choose $ c $ small enough.
\end{proof}

\subsection{Fixed point problem}

In this section, we will prove bounds for the remainder term $
    \Psi^{(N)}_{-n}$, uniformly in $n$ and $N$.
We fix a renormalised coupling constant $g\in(0,1]$, alongside a renormalised
mass $r\geqslant1$. The estimates on the remainder $\Psi$ will hold
conditionally on $\Omega_g$.



\begin{lemma}\label{lem_fp}
    Let $d=2$.
    Then for every $r\geqslant1$ and large enough, odd $L \in \NN $ (depending only on $r$),
    we have that  for every $g\in(0,1]$
    \begin{equs}\label{eq:estimate_remainder}
        \max_{0 \leqslant n \leqslant N}
        L^{- \kappa_{\rms} n}
        \sup_{x \in \Lambda^{n}}
        | \Psi_{-n}^{(N)} (x)|^{1/3}
        \leqslant (2g)^{-1}  \,,
        \quad \text{ on }\ \Omega_g\,,
    \end{equs}
    where $ \kappa_{\rms}>0$ is the same as in the definition of $ \nnorm{\bigcdot}$ in
    \eqref{eq_stoch_norm}.
\end{lemma}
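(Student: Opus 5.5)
The plan is a backward induction along the flow \eqref{e_rg_system}, from $n=N$ down to $n=0$, at fixed $N$ and on the event $\Omega_g$. The starting point is $\Psi^{(N)}_{-N}\equiv 0$, so \eqref{eq:estimate_remainder} holds trivially at $n=N$. The induction hypothesis at scale $n$ will be
\begin{equation*}
\sup_{x\in\Lambda^n}|\Psi_{-n}(x)|\leqslant L^{3\kappa_\rms n}(2g)^{-3}.
\end{equation*}
On $\Omega_g$ we also have
\begin{equation*}
\|\xi_{-n}\|_\infty\leqslant L^{\kappa_\rms n}(2g)^{-1},\qquad \|\tOne_{\!\!-n}\|_\infty\leqslant L^{2\kappa_\rms n}(2g)^{-2}.
\end{equation*}
Throughout I will use that $Q_1$ and $S_1$ do not increase the sup norm, that $\|P_1 f\|_\infty\leqslant 2\|f\|_\infty$, and that
\begin{equation*}
\lambda_{-n}=L^{-n}g,\qquad |\mu_{-n}|\leqslant L^{-2n}(r+n)\qquad (g\leqslant 1).
\end{equation*}

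\textbf{Step 1: solve for $\mR_{-n+1}$ with $1\leqslant n\leqslant N$.} I view \eqref{e_supp1_rho} as a fixed point problem $\mR=A+B\mR$ in $\mC(\Lambda^n)$. The linear part is
\begin{equation*}
B\mR=P_1\big((\lambda_{-n}\xi_{-n}+\lambda_{-n}^2\tOne_{\!\!-n}+\lambda_{-n}^3\Psi_{-n}+\mu_{-n})\mR\big).
\end{equation*}
Using the bounds above, its operator norm is at most
\begin{equation*}
2\big(L^{-(1-\kappa_\rms)n}+L^{-2(1-\kappa_\rms)n}+L^{-3(1-\kappa_\rms)n}+L^{-2n}(r+n)\big),
\end{equation*}
which is $\leqslant 1/2$ for all $n\geqslant1$ once $L$ is large depending on $r$. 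Hence $\mR_{-n+1}$ exists and is unique, and $\|\mR_{-n+1}\|_\infty\leqslant 2\|A\|_\infty$. This also gives unique solvability of the RG step, and therefore of \eqref{eq_vN}.

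Next I bound the inhomogeneous part $A$. The main contributions are:
\begin{itemize}
\item $P_1\tOne_{\!\!-n}$ and $P_1(\xi_{-n}P_1\xi_{-n})$, each $\lesssim L^{2\kappa_\rms n}g^{-2}$;
\item $\lambda_{-n}P_1\Psi_{-n}$ and $\lambda_{-n}P_1(\tOne_{\!\!-n}P_1\xi_{-n})$, which are smaller by a factor $L^{-(1-\kappa_\rms) n}$;
\item $\mu_{-n}\lambda_{-n}^{-1}P_1\xi_{-n}$, which is at most $L^{-n(1-\kappa_\rms)}(r+n)g^{-2}$.
\end{itemize}
Altogether this yields $\|\mR_{-n+1}\|_\infty\leqslant C\,(1+r)\,L^{2\kappa_\rms n}g^{-2}$, with $C$ independent of $L$ and $n$.

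\textbf{Step 2: close the induction using the $\Psi$-line of \eqref{e_rg_system}.} Since $d=2$, the prefactor is $L^{2-3\alpha}=L^{-1}$. The linear term then gives
\begin{equation*}
L^{-1}\|\Psi_{-n}\|_\infty\leqslant L^{-1+3\kappa_\rms}\,L^{3\kappa_\rms(n-1)}(2g)^{-3},
\end{equation*}
which is a small fraction of the target $L^{3\kappa_\rms(n-1)}(2g)^{-3}$. Every remaining term is cubic in $g^{-1}$ and bounded by $C(1+r)L^{3\kappa_\rms n}g^{-3}$: for example $S_1Q_1(\xi_{-n}\mR_{-n+1})$ and $S_1Q_1(\tOne_{\!\!-n}P_1\xi_{-n})$, while the terms carrying extra powers of $\lambda_{-n}$ are even smaller. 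After multiplication by $L^{-1}$, their sum is therefore at most $C'(1+r)L^{-1+3\kappa_\rms}L^{3\kappa_\rms(n-1)}(2g)^{-3}$. Choosing $L$ large enough, depending only on $r$ (with $\kappa_\rms$ small and fixed), makes $C'(1+r)L^{-1+3\kappa_\rms}+L^{-1+3\kappa_\rms}\leqslant 1$. This propagates the bound from $n$ to $n-1$ and yields \eqref{eq:estimate_remainder} uniformly in $N$.

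\textbf{Main obstacle.} I expect the delicate point to be the $n$-dependence coming from the counterterm in $\mu_{-n}$: the factor $(r+n)$ enters through $\mu_{-n}\lambda_{-n}^{-1}P_1\xi_{-n}$ in $\mR$. It has to be absorbed by the decay $L^{-n(1-\kappa_\rms)}$, via $\sup_n (r+n)L^{-n(1-2\kappa_\rms)}\lesssim r$ for $L\geqslant 3$. Tracking this uniformly in $n$, together with the choice $\kappa_\rms<1/3$ that keeps the $L^{3\kappa_\rms}$ loss below the $L^{-1}$ gain, is the part that needs care. The remaining estimates are straightforward bookkeeping.
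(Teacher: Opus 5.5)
Your proposal is correct and follows the paper's proof. Both arguments run a backward induction from $\Psi^{(N)}_{-N}\equiv 0$, first bound $\mR_{-n+1}$ from \eqref{e_supp1_rho} by absorbing its small linear part, and then close the bound on $\Psi_{-n+1}$ using the factor $L^{2-3\alpha}=L^{-1}$ in \eqref{e_rg_system}. The differences are cosmetic: you treat the $\mR$-step as a genuine contraction, which also yields existence and uniqueness of $\mR_{-n+1}$ where the paper only absorbs $\tfrac12\|\mR_{-n+1}\|_\infty$, and you induct on the target bound $L^{3\kappa_\rms n}(2g)^{-3}$ itself rather than the sharper \eqref{e_ind_basis}, which the paper carries only to reuse in Lemma~\ref{lem_remainder_conv}.
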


\begin{proof}
    As a preliminary step, note that we have control of the effective mass. Indeed, recalling the expression
    of the effective mass in \eqref{eq_pert_para} and using $\lambda_{-n} = L^{- \alpha
    n} g $ with $g\leqslant1$, we obtain
    \begin{equation}\label{e_mu_est}
        \begin{aligned}
            | \mu_{-n}| \leqslant  L^{(- 2+\kappa_\rms) n } ( r +c )\,,
        \end{aligned}
    \end{equation}
    for $ c\eqdef \sup_{ n \in \NN} n
        2^{-\kappa_\rms n}$ which is independent of $n$, $N$ and $L$.
	 Again, it will be convenient to abbreviate all supremum
    norms by simply writing
    \begin{equation*}
        \begin{aligned}
            \| f\|_{\infty } \eqdef \sup_{x \in \Lambda^{k}} |f(x) | \qquad \forall k \in
            \NN\,, \  f \in
            \mC ( \Lambda^{k})\,,
        \end{aligned}
    \end{equation*}
    leaving the dependency of the corresponding lattice implicit.
    Therefore, on $\Omega_g$ \eqref{eq_def_Omega}, we observe that 
    \begin{equs}\label{eq:ongoodevent}
        \lambda_{-n} \|\xi_{-n} \|_{\infty}\leqslant \frac{1}{2}  L^{(-\alpha+\kappa_\rms) n}\quad\text{and}\quad
        \lambda_{-n}^{2} \|\tOne_{\!\! -n} \|_{\infty} \leqslant\frac{1}{4}  L^{2(-\alpha+\kappa_\rms) n}\,.
    \end{equs}

    We prove the uniform bound \eqref{eq:estimate_remainder} by induction. For this, 
    we introduce
	    \begin{equs}\label{eq_triple_n}
	    \nnorm{ \xi,\tOne}_{n} \eqdef  \sup_{k \geqslant n } L^{-\kappa_{\rms}
            k}\Big(\sup_{x\in\Lambda^k} | \xi_{-k}(x) |\vee\sup_{x\in\Lambda^k} | \tOne_{\!
        \!-k}(x) |^{1/2}\Big)\,,
	\end{equs}
such that in particular $ \nnorm{\bigcdot}_{0} \equiv \nnorm{\bigcdot}$.
    Now, let $N \in \NN$ be arbitrary and note that $ \Psi_{-N}^{(N)}
        \equiv 0 $. 
We will construct the trajectory $\Psi_{-n}^{(N)}$ for $n \leqslant N$ by induction, to which
end we assume that for some $ 0 \leqslant n <N$ it holds that
\begin{equation}\label{e_ind_basis}
        \begin{aligned}
            \max_{n \leqslant k \leqslant N}
            L^{- 3\kappa_{\rms}  k }
            \sup_{x \in \Lambda^{k}}
            | \Psi_{-k}^{(N)} (x)|
             & \leqslant
     (2g)^{-1}  {\nnorm{ \xi^{(N)}, \tOne^{(N)}}{}^2_{n+1}}\,.
        \end{aligned}
    \end{equation}
Note that our induction hypothesis is stronger than \eqref{eq:estimate_remainder} 
(recall that on the event $\Omega_g$, ${\nnorm{ \xi^{(N)}, \tOne^{(N)}}{}^2_{n+1}}$ is
indeed bounded by $(2g)^{-2}$).
This will be convenient when proving convergence of $\Psi^{(N)}$ as the cutoff
$N$ is removed in Section~\ref{sec_conv}. 
In the remainder of the proof we will drop the superscripts that indicate the
    dependency on $N$. 



Before proceeding, we recall the discussion at the bottom of Remark~\ref{rem:3_1} about the triangular structure of the system~\eqref{eq:sys}: the remainder $ \Psi_{-n+1}$ in \eqref{e_rg_system}
is not determined solely by $ \Psi_{-n}$ but also depends on the fluctuation field $
\mR_{-n +1} $.
Accordingly, each induction step is divided into two parts: we first estimate $ \mR_{-n
+1}$ via
a fixed--point problem, and then derive the desired bound \eqref{e_ind_basis} for $ \Psi_{-n +1}$.

    \begin{enumerate}
	    \item \textit{Remainder fluctuation field.}
		    Using \eqref{e_supp1_rho} together with the fact that $ | P_{1}f | \leqslant 2 \| f\|_{ \infty}$,
              we have
              \begin{equation}\label{eq_supp1_rho_est}
                  \begin{aligned}
                      \|\mR_{-n+1}\|_{\infty}
                       & \leqslant
                      2 \big(
                      \lambda_{-n} \|\xi_{-n} \|_{\infty}
                      +
                      \lambda_{-n}^{2} \|\tOne_{\!\! -n} \|_{\infty}
                      +
                      \lambda_{-n}^{3} \|\Psi_{-n}\|_{\infty}
                      +
                      \mu_{-n}
                      \big) \|\mR_{-n+1}\|_{\infty}                                        \\
                       & \quad +
                      4\Big(
                      \|\tOne_{\!\! -n}\|_{\infty}
                      +  \| \xi_{-n}\|^{2}
                      +\lambda_{-n} \|\Psi_{-n} \|_{\infty}
                      +
                      \lambda_{-n} \| \tOne_{\!\! -n} \|_{\infty}   \|\xi_{-n} \|_{\infty} \\
                       & \quad \quad\quad  +
                      \lambda_{-n}^{ 2} \| \Psi_{-n} \|_{\infty} \| \xi_{-n} \|_{ \infty }
                      +
                      \mu_{-n} \lambda_{-n}^{-1} \| \xi_{-n}\|_{\infty}\Big)
                      \,.
                  \end{aligned}
              \end{equation}

              Now, using the estimates \eqref{e_mu_est}, \eqref{eq:ongoodevent}, and \eqref{e_ind_basis}, we
	      can upper bound the first term on the right-hand side of \eqref{eq_supp1_rho_est}:
              \begin{equs}
                  \begin{aligned}
                      2\big(
                      \lambda_{-n} & \|\xi_{-n} \|_{\infty}
                      +
                      \lambda_{-n}^{2} \|\tOne_{\!\! -n} \|_{\infty}
                      +
                      \lambda_{-n}^{3} \|\Psi_{-n}\|_{\infty}
                      +
                      \mu_{-n}
                      \big)\|\mR_{-n+1}\|_{\infty}          \\
                                   & \leqslant
                      2\big(
                      \tfrac{1}{2} L^{(-\alpha+\kappa_\rms) n} +
                      \tfrac{1}{4}  L^{2(-\alpha+\kappa_\rms) n}
                      +
                      \tfrac{1}{8} L^{ 3(-\alpha+\kappa_\rms) n}
                      +
                      L^{(- 2 +\kappa_\rms )n } ( r +c )
                      \big)\|\mR_{-n+1}\|_{\infty}          \\
                                   & \leqslant
                      (1+3L^{-1/2}C_r)
                      L^{(- \alpha+\kappa_\rms) n}
                      \|\mR_{-n+1}\|_{\infty}\leqslant
                      (1+3L^{-1/2}C_r)
                      L^{-1/2}
                      \|\mR_{-n+1}\|_{\infty}
                      \,,
                  \end{aligned}
              \end{equs}
              where we introduced the notation  $ C_r\eqdef  (1 + r+c) >1$, used the induction hypothesis for $ \Psi_{-n}$ in the first inequality, and crude estimates such as $(-\alpha+\kappa_\rms)n\leqslant -1/2$.

              Along the same lines, we can control the second terms on the right--hand side of
              \eqref{eq_supp1_rho_est}:
              \begin{equs}
                    4\Big(
                      &\|\tOne_{\!\! -n}\|_{\infty}
                      +  \| \xi_{-n}\|^{2}
                      +\lambda_{-n} \|\Psi_{-n} \|_{\infty}
                      +
		      \big(
                      \lambda_{-n} \| \tOne_{\!\! -n} \|_{\infty}   +
                      \lambda_{-n}^{ 2} \| \Psi_{-n} \|_{\infty}                       +
                      \mu_{-n} \lambda_{-n}^{-1} \big) \|\xi_{-n}\|_{\infty}\Big)\\&\leqslant
                  2\Big(  2 L^{2\kappa_\rms n} +2L^{(-\alpha+3\kappa_\rms)
		  n}+L^{(-2\alpha+4\kappa_\rms) n}+L^{(-2+\alpha+2\kappa_\rms)
  n}(r+c)\Big)g^{-1}{\nnorm{\xi^{(N)}, \tOne^{(N)}}_{n}}\\
                  &\leqslant 4
                      (1 +4 L^{-1/2}C_r)
                      L^{2\kappa_\rms n}g^{-1}
		     {\nnorm{\xi^{(N)}, \tOne^{(N)}}_{n}}\,,
                                \end{equs}
              and finally arrive at
	      \begin{equation}\label{eq_supp123_R}
                  \begin{aligned}
                      \| \mR_{-n +1}\|_{\infty}
                       & \leqslant
                      (1+3L^{-1/2}C_r )
                      L^{-1/2}
                      \|\mR_{-n+1}\|_{\infty}
                      +
                      4
                      (1 +4 L^{-1/2}C_r)
                      L^{2\kappa_\rms n}g^{-1}
		     {\nnorm{\xi^{(N)}, \tOne^{(N)}}_{n}} \\
                       & \leqslant
                      \tfrac{1}{2}\|\mR_{-n+1}\|_{\infty}
                      +
                      8
                      L^{2\kappa_\rms n}g^{-1}
	     {\nnorm{\xi^{(N)}, \tOne^{(N)}}_{n}} \\
                       & \leqslant16
                      L^{2 \kappa_{\rms} n}g^{-1}
		      {\nnorm{\xi^{(N)}, \tOne^{(N)}}_{n}}\,,
                  \end{aligned}
              \end{equation}
              where again we used the definition of $ C_{r}$ and then took $L$ large enough depending on $r$. 
	      In particular, $\CB R_{-n+1}$ only depends on the randomness at scales lower or equal to $L^{-n}$.

        \item \textit{Remainder effective force.} We perform the same type of estimate for $ \Psi_{-n+1}$: From
              \eqref{e_rg_system}, we see that
              \begin{equs}\label{eq_supp1_psi_est}
                  \begin{aligned}
                      \|\Psi_{-n+1}\|_{\infty}
                       & \leqslant
                      L^{2- 3 \alpha} \big(
                      1+
                      2\lambda_{-n} \| \xi_{-n}\|_{\infty}+
                      \lambda_{-n}^{2} \|\mR_{-n+1}\|_{\infty}
                      \big)
                      \| \Psi_{-n}\|_{\infty}
                      \\
                       & \quad+
                      L^{2- 3 \alpha} \big(
                      2 \|\tOne_{\!\! -n}\|_{\infty} \| \xi_{-n}\|_{\infty}
                      +
                      \| \xi_{-n} \|_{ \infty} \|\mR_{-n+1}\|_{\infty}
                      +
                      \lambda_{-n}\|\tOne_{\!\! -n}\|_{\infty} \|\mR_{-n+1}\|_{\infty}
                      \big)
                      \,,\textcolor{white}{blablabl}
                  \end{aligned}
              \end{equs}
              where we again used $ \| P_{1}f \|_{\infty} \leqslant 2 \| f \|_{\infty}$.
              Inserting  \eqref{eq:ongoodevent}, \eqref{e_ind_basis} and \eqref{eq_supp123_R},
              we deduce the upper bounds
              \begin{equs}\label{eq_supp2_psi_est}
                  {}\big(
                  1+
                  2\lambda_{-n} \| \xi_{-n}\|_{\infty}+
                  \lambda_{-n}^{2} \|\mR_{-n+1}\|_{\infty}
                  \big)
                  \| \Psi_{-n}\|_{\infty}
                  \leqslant
                  L^{ 3\kappa_{\rms} n}
	  g^{-1} {\nnorm{ \xi^{(N)}, \tOne^{(N)}}{}^2_{n}}
              \end{equs}
	      and
	      \begin{equation}\label{eq_supp3_psi_est}
                  \begin{aligned}
                      \big(
                      2 \|\tOne_{\!\! -n}\|_{\infty} \| \xi_{-n}\|_{\infty}
                      +
                      \| \xi_{-n} \|_{ \infty} \|\mR_{-n+1}\|_{\infty}
                      + &
                      \lambda_{-n}\|\tOne_{\!\! -n}\|_{\infty} \|\mR_{-n+1}\|_{\infty}
                      \big)         \\
                        & \leqslant
			10
                      L^{3 \kappa_{\rms} n}
		      g^{-1} {\nnorm{ \xi^{(N)}, \tOne^{(N)}}{}^2_{n}}\,.
                  \end{aligned}
              \end{equation}
              Inserting \eqref{eq_supp2_psi_est} and \eqref{eq_supp3_psi_est} into \eqref{eq_supp1_psi_est} then yields
              \begin{equation}\label{e_}
                  \begin{aligned}
                      \|\Psi_{-n+1}\|_{\infty}
                      \leqslant
		      \big( 11 L^{2-3 \alpha}   \big) L^{3 \kappa_{\rms} n} g^{-1}
		      {\nnorm{ \xi^{(N)}, \tOne^{(N)}}{}^2_{n}}\,.
                  \end{aligned}
              \end{equation}
              Once more, we may choose $ L$
	      large enough, such that $ 11 L^{2-3 \alpha}   \leqslant 2^{-1}$.
    \end{enumerate}
    Overall, this extends \eqref{e_ind_basis} to $n-1 \leqslant k \leqslant N$ and
    concludes the induction step.
\end{proof}

\subsection{Control of the renormalised solution}\label{sec_besov}

In the previous section, we proved a uniform bound (in $n$ and $N$) of the remainder term $
    \Psi_{-n}^{(N)}$ in the effective equation~\eqref{e_v_n}. This provides the
last ingredient to control the coefficients in the effective equation on the event $\Omega_g$, where the enhanced
noise $ ( \xi^{(N)}, \tOne^{(N)})$ remains controlled by $ g$.
It is only left to push this control from the coefficients onto the solution $ v_{-n}$, 
and derive that
\begin{equation*}
    \begin{aligned}
        \sup_{N \in \NN}\Big(|v_0^{(N)}|\vee \max_{ 1 \leqslant n  \leqslant N}
        L^{ - \kappa n}
        \sup_{ x \in \Lambda_{n}} \big| P_{1} v^{(N)}_{-n} (x ) |\Big) < \infty\,.
    \end{aligned}
\end{equation*}
The key ingredient of this control
 is an inductive scheme backwards along the flow, which we
present next. This is in contrast to the previous section, where we propagated
bounds forward along the flow.
\begin{lemma}\label{lem_aid_besov}
	Let  $d=2$.
    Then for every $r\geqslant1$ and large enough, odd $L \in \NN $ (depending only on $r$),
    we have that  for every $g\in(0,1]$ it holds that
    \begin{equs}\label{eq:Q_1andP_1}
        \begin{aligned}
            \sup_{N \in \NN}\Big(|v_0^{(N)}|\vee \max_{1\leqslant n \leqslant N}
            \sup_{x \in \Lambda^{n}} \Big(   L^{-( \alpha+\kappa_\rms)n}| Q_{1} v^{(N)}_{-n}(x) |
            \vee
            L^{-3\kappa_{\rms}n}  |P_{1} v^{(N)}_{-n}(x)|
            \Big)\Big) \leqslant 8\,, \quad \text{ on } \Omega_g\,. \qquad
        \end{aligned}
    \end{equs}
\end{lemma}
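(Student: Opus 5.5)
The plan is to prove \eqref{eq:Q_1andP_1} by induction \emph{from coarse to fine scales}: first the single value $v_0$, then $n=1,2,\dots,N$. This is the reverse of Lemma~\ref{lem_fp}. Throughout I work on $\Omega_g$ and use three inputs:
\begin{itemize}
\item the bounds \eqref{eq:ongoodevent} on the enhanced noise;
\item the bound \eqref{eq:estimate_remainder} on $\Psi$, namely $\lambda_{-n}^3\|\Psi_{-n}\|_\infty\leqslant \tfrac18 L^{3(-\alpha+\kappa_\rms)n}$;
\item the bound \eqref{eq_supp123_R} on the fluctuation remainder $\mR_{-n+1}$.
\end{itemize}

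\textbf{The base case $n=0$.} On $\Lambda^0$, which is a single point, the hierarchical Laplacian is the empty sum, so $\Delta_{H,0}=0$. The effective equation \eqref{e_v_n} then reduces to the scalar identity
\begin{equation*}
0=\big(g\xi_0+g^2\tOne_{\!\!0}+g^3\Psi_0+r\big)v_0+1 .
\end{equation*}
On $\Omega_g$ we have $|g\xi_0|\leqslant\tfrac12$, $g^2|\tOne_{\!\!0}|\leqslant\tfrac14$ and, by Lemma~\ref{lem_fp}, $g^3|\Psi_0|\leqslant\tfrac18$. Since $r\geqslant1$, the bracket is at least $1-\tfrac78=\tfrac18$, and therefore $|v_0^{(N)}|\leqslant 8$ uniformly in $N$.

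\textbf{The background bound on $Q_1v_{-n}$.} By \eqref{eq:defofv-n} and Lemma~\ref{lem:Qtwice}, $Q_1v_{-n}$ is the rescaled $v_{-n+1}$, namely $Q_1v_{-n}(x)=L^{\alpha}v_{-n+1}(\cdot)$ evaluated at the parent point. Rather than iterating the crude bound $|v_{-n+1}|\leqslant|Q_1v_{-n+1}|+|P_1v_{-n+1}|$, which loses a factor $2$ per step, I telescope. Writing $v_{-k}=Q_1v_{-k}+P_1v_{-k}$ and repeating down to scale $0$ gives
\begin{equation*}
|Q_1v_{-n}(x)|\leqslant L^{\alpha n}|v_0|+\sum_{k=1}^{n-1}L^{\alpha(n-k)}\sup|P_1v_{-k}| .
\end{equation*}
Inserting the inductive bounds $|P_1v_{-k}|\leqslant 8L^{3\kappa_\rms k}$ for $k<n$ yields
\begin{equation*}
|Q_1v_{-n}|\leqslant 8L^{\alpha n}\Big(1+\sum_{k\geqslant1}L^{(-1+3\kappa_\rms)k}\Big)\leqslant 8L^{\alpha n}\big(1+2L^{-1/2}\big)\leqslant 8L^{(\alpha+\kappa_\rms)n}
\end{equation*}
for $n\geqslant1$, provided $L^{\kappa_\rms}\geqslant 1+2L^{-1/2}$. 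This holds for $L$ large, since $\kappa_\rms>0$ is fixed.

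\textbf{The fluctuation bound on $P_1v_{-n}$.} By \eqref{eq_def_rho} the fluctuation is
\begin{equation*}
P_1v_{-n}=z_{-n+1}=\big(\lambda_{-n}P_1\xi_{-n}+\lambda_{-n}^2\mR_{-n+1}\big)Q_1v_{-n} .
\end{equation*}
The first factor is controlled as follows:
\begin{itemize}
\item $2\lambda_{-n}\|\xi_{-n}\|_\infty\leqslant L^{(-\alpha+\kappa_\rms)n}$ by \eqref{eq:ongoodevent};
\item $\lambda_{-n}^2\|\mR_{-n+1}\|_\infty\leqslant 16\,g\,L^{(-2\alpha+2\kappa_\rms)n}\nnorm{\xi,\tOne}\leqslant 8L^{(-2\alpha+2\kappa_\rms)n}$ by \eqref{eq_supp123_R}, using $g\nnorm{\xi,\tOne}\leqslant\tfrac12$ on $\Omega_g$.
\end{itemize}
For $L$ large the first factor is therefore at most $L^{(-\alpha+\kappa_\rms)n}\big(1+8L^{-1/2}\big)$. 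Multiplying by the background bound $8L^{(\alpha+\kappa_\rms)n}$ gives
\begin{equation*}
|P_1v_{-n}|\leqslant 8\big(1+8L^{-1/2}\big)L^{2\kappa_\rms n}\leqslant 8L^{3\kappa_\rms n},
\end{equation*}
again absorbing the constant into $L^{\kappa_\rms n}$ for $n\geqslant1$ and $L$ large. This closes the induction. The constants do not depend on $N$, which yields the supremum over $N$.

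\textbf{Main obstacle.} The delicate point is closing the constants at exactly $8$ without any loss per scale. The naive recursion $v_{-n}\mapsto L^{\alpha}(|Q_1|+|P_1|)$ doubles the constant at every step. I therefore expect the telescoping representation of $Q_1v_{-n}$, together with the spare factor $L^{\kappa_\rms n}$ in the exponents, to be the key device. If the exponents do not quite match, I would first strengthen the induction hypothesis to an explicit geometric sum before resorting to a larger constant.
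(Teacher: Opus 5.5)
Your proof is correct and follows the same structure as the paper's: the scalar base case $|v_0|\leqslant 8$, an induction from coarse to fine scales, the background $Q_1v_{-n}$ obtained by rescaling $v_{-n+1}$, and the fluctuation $P_1v_{-n}=z_{-n+1}$ controlled through the fixed-point structure; the differences are only bookkeeping, namely you reuse the $\mR$-bound \eqref{eq_supp123_R} where the paper reruns the contraction on \eqref{eq_supp2_z} to get $\|z_{-n+1}\|_\infty\leqslant 4L^{(-\alpha+\kappa_\rms)n}\|Q_1v_{-n}\|_\infty$, and you telescope the background bound where the paper lets the constants grow like $(8L^{\alpha})^n$ and $(32L^{\kappa_\rms})^n$. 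Your worry about the naive recursion is unfounded, because the spare factor $L^{\kappa_\rms}$ gained at each step absorbs any fixed constant (the paper simply takes $L$ large so that $8\leqslant L^{\kappa_\rms}$), so the telescoping is a convenience rather than a necessity.
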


Before turning to the proof, let us briefly explain the strategy and the origin of the estimate~\eqref{eq:Q_1andP_1}.
Recall that $v_0$ is simply a scalar, determined by the algebraic equation~\eqref{e_v_n} for $n=0$, where the Laplacian term vanishes.
It is therefore the first object to be defined.
Our goal is then to construct the family $(v_{-n})_{n\geq 0}$ recursively by induction
over~$n$, passing from $v_{-n}$ to $v_{-n-1}$.
Assuming that $v_{-n}$ is already known,
we leverage on the fact that $Q_1 v_{-n-1}$ is obtained from $v_{-n}$ by a simple rescaling.
Consequently, it only remains to control $P_1 v_{-n-1}$, 
where we will use~\eqref{eq_supp2_z} and the bound on $Q_1 v_{-n-1}$.

The estimate~\eqref{eq:Q_1andP_1} involves a minor subtlety, arising from the fact that
we are solving an equation whose solution has positive regularity.
While $P_1 v_{-n}$ grows at most like $L^{\kappa n}$, the term $Q_1 v_{-n}$ exhibits a worse behaviour.
This phenomenon is analogous to a classical observation: If $f$ has H\"older--Besov regularity $\beta>0$ and $(\Delta_i)_{i\geq 0}$ denotes a Littlewood--Paley decomposition, then $\|\Delta_i f\|_\infty \sim 2^{-\beta i}$ whereas $\big\|\sum_{k=0}^i \Delta_k f\big\|_\infty \sim 1$.
Since we expect the solution on $ \TT^d$ to have regularity $\alpha-\kappa$, rescaling to
the unit lattice suggests that $Q_1 v_{-n}$ should grow like $L^{(\alpha+\kappa)n}$.

\begin{proof}[Proof of Lemma~\ref{lem_aid_besov}]
    Let $N \in \NN$.
    We recall the notation from Section~\ref{sec_rg_general} and write
    \begin{equation*}
        \begin{aligned}
            w_{-n +1} =  Q_{1} v_{-n}\quad \text{and}\quad z_{-n +1}=
            P_{ 1} v_{-n}\,.
        \end{aligned}
    \end{equation*}
    The idea of the proof is built on the fact that
    $v_{-n} = L^{\alpha} S_{-1} v_{-n+1} + z_{-n +1}$.
    In order to control $ v_{-n}$, it suffices to control $
        v_{-n +1}$ and the fluctuations that were averaged out going from the lattice $
        \Lambda^{n}$ to the lattice $ \Lambda^{n-1}$.

    First, to control the fluctuations, we make use of \eqref{eq_supp2_z} which yields
    \begin{equation*}
        \begin{aligned}
            \|z_{-n +1}\|_{\infty}
             & \leqslant
            2\big(\lambda_{-n} \|  \xi_{-n}\|_{\infty} + \lambda_{-n}^{2} \| \tOne_{\!\!
                -n}\|_{\infty}
            +\lambda_{-n}^{3} \|   \Psi_{-n} \|_{\infty} \big) \|w_{-n+1}\|_{\infty} \\
             & \qquad+
            2\big(\lambda_{-n} \| \xi_{-n}\|_{\infty} +
            \lambda_{-n}^{2} \| \tOne_{\!\! -n} \|_{\infty}
            +
            \lambda_{-n}^{3} \|   \Psi_{-n} \|_{\infty}
            + \mu_{-n} \big) \| z_{-n+1}\|_{\infty}
        \end{aligned}
    \end{equation*}
    where we again used $ \| P_{1} f \|_{\infty} \leqslant 2 \| f\|_{\infty}$, as
    well as the notation $ \| \bigcdot \|_{\infty}$ for the
    supremum--norm without specifying the underlying lattice.
    Once more inserting that $ \lambda_{-n} =L^{-\alpha n}g $, \eqref{e_mu_est},
    \eqref{eq:ongoodevent} and Lemma~\ref{lem_fp}, we arrive at
    \begin{equs}\label{eq_bound_zWithQ}
        \begin{aligned}
             & \|z_{-n +1}\|_{\infty}                                                                                     \\
             & \leqslant
            \big( L^{ (-  \alpha + \kappa_{\rms}) n}  +  L^{  2(-  \alpha + \kappa_{\rms}) n}
            +   L^{ 3 ( -\alpha+ \kappa_{\rms})n} \big)
            \big(\|w_{-n+1}\|_{\infty} + \| z_{-n +1}\|_{ \infty}\big)
            + 2L^{(-2+\kappa_\rms)n}( r + c ) \| z_{-n+1}\|_{\infty}                                                      \\
             & \leqslant \big(3 L^{-1/2} +2L^{-1}(r+c)\big)  \| z_{-n +1} \|_{\infty}+2L^{ (-  \alpha + \kappa_{\rms}) n}
            \| Q_{1} v_{-n}\|_{\infty}
            \\
             & \leqslant \tfrac{1}{2}   \| z_{-n +1} \|_{\infty}+2L^{ (-  \alpha + \kappa_{\rms}) n}
            \| Q_{1} v_{-n}\|_{\infty}                                                                                    \\
             & \leqslant4L^{ (-  \alpha + \kappa_{\rms}) n}
            \| Q_{1} v_{-n}\|_{\infty}  \\
             & \leqslant(4L^{ -  \alpha + \kappa_{\rms}})^n
            \| Q_{1} v_{-n}\|_{\infty} \,,
        \end{aligned}
    \end{equs}
    for every $ n \geqslant 1$, where again we took $L$ large enough depending on $r$.

    Let us now proceed by induction over $n$. First, we observe that in view of the
    definition \eqref{eq_hierLaplace_BBS}, $\Delta_{H,0}=0$. Therefore, equation
    \eqref{e_v_n} becomes (for $n=0$)
    \begin{equs}
        \big(g\xi_{0}
        + g^{2} \tOne_{\! \! 0}
        + g^{3} \Psi_{0} + r\big) v_{0}  + 1=0\,,
    \end{equs}
    which is an algebraic equation in $ v_{0} \in \RR$. 
    Here we used that $ \lambda_{0} = g $, $ \mu_{0}= r$, and $
        \gamma_{0} =1 $. Therefore,
    \begin{equation*}
        \begin{aligned}
            | v_{0}| & =
            \big|
            g \xi_{0} + g^{2} \tOne_{0}+ g^{3}
            \Psi_{ 0} + r
            \big|^{-1} \leqslant (  r -g| \xi_{0}| - g^{2} |\tOne_{0}|- g^{3}
            |\Psi_{ 0}| )^{-1}
            \leqslant \big(  r -\tfrac{1}{2}  - \tfrac{1}{4} - \tfrac{1}{8}  \big)^{-1}\leqslant8
            \,,
        \end{aligned}
    \end{equation*}
    where we used \eqref{eq:ongoodevent} and Lemma~\ref{lem_fp} in the second inequality, alongside the fact that $r\geqslant1$.
    Hence, by definition of $ v_{-1}$ and \eqref{eq_bound_zWithQ},
    we therefore conclude that
    \begin{equation*}
        \begin{aligned}
            \| Q_{1} v_{-1}\|_{\infty} \leqslant L^{\alpha} | v_{0}|
            \leqslant 8 L^{\alpha}
            \qquad \text{and thus}\qquad
            \| z_{0}\|_{\infty} \leqslant
            32 L^{ \kappa_{\rms}} \,.
        \end{aligned}
    \end{equation*}

    We proceed by induction and assume that
    \begin{equation}
        \begin{aligned}
            \| Q_{1} v_{-n}\|_{\infty}
            \leqslant (8 L^{\alpha })^{n}
            \qquad \text{and thus}\qquad
            \| z_{-n+1}\|_{\infty} \leqslant
            (32L^{ \kappa_{\rms}})^{n} \,,
        \end{aligned}
    \end{equation}
    for some $1 \leqslant n < N$. Then,
    \begin{equation}\label{eq_supp_istep}
        \begin{aligned}
            \| Q_{1} v_{-n-1}\|_{\infty}
             & \leqslant L^{\alpha} \| v_{-n}  \|_{\infty}
            \leqslant L^{\alpha} \big(
            \| Q_{1} v_{-n}\|_{\infty}
            + \| z_{-n+1}\|_{\infty} \big)                                                     \\
             & \leqslant  \tfrac{1}{8} (8L^{\alpha})^{ n+1}+  L^{\alpha} (32 L^{\kappa_{\rms}}
            )^{n} \leqslant
            ( 8 L^{\alpha})^{n+1} \big(
            \tfrac{1}{8} +\tfrac{1}{2}L^{ - \alpha + \kappa_{\rms}}
            \big) \leqslant
            ( 8 L^{\alpha})^{n+1} \,,
        \end{aligned}
    \end{equation}
    where we used the induction hypothesis in the second line, and in the last
    inequality that $L$ is large enough (independent of $N$).

    To conclude the induction step, it only remains to control the fluctuation term
    $ \| z_{-n}\|_{\infty}$, for which we use \eqref{eq_bound_zWithQ} and
    \eqref{eq_supp_istep}, which yields
    \begin{equation*}
        \begin{aligned}
            \| P_{1} v_{-n-1}\|_{\infty}
            =
            \| z_{-n}\|_{\infty} \leqslant
           ( 4 L^{ -  \alpha+ \kappa_{\rms} })^{n+1}
            \| Q_{1} v_{-n-1}\|_{\infty}
            \leqslant
            (32 L^{\kappa_{\rms}})^{n+1}\,.
        \end{aligned}
    \end{equation*}
    Finally, taking $L$ large enough, we can enforce that $8\leqslant L^{\kappa_\rms}$. This concludes the proof.
\end{proof}

\section{Convergence in the UV limit}\label{sec_conv}

In the previous section, we proved control of the RG flow of the enhanced noise, the remainder
$ \Psi $, and the effective solution.
Following, the same steps as in the proofs of Lemmas~\ref{lem_stoch}, \ref{lem_fp}, and
\ref{lem_aid_besov}, we can show that the sequences are in fact Cauchy, and thus converge.
This extension is standard for readers familiar with subcritical SPDEs and may be skipped.
However, we provide the complete argument in this section for any reader that
is less familiar with the topic.

\subsection{Convergence of the effective force}

First, we show that the enhanced noise converges $ \PP$--almost surely in the UV limit.

\begin{lemma}[Convergence of the enhanced noise]\label{lem_stoch_conv}
    Let $\alpha>0$, i.e. $d<4$, and $ L \in \NN$.
    Then for every $g \in (0, \infty)$ the following
    limit exists $ \PP$--almost surely
    \begin{equs}
	    \lim_{N \to \infty} \big(\xi^{(N)},\tOne^{(N)}
	    \big) =( \xi_{-n}, \tOne^\star_{-n})_{n \in \NN}\,,
    \end{equs}
    in the topology induced by $\nnorm{\bigcdot}$.
\end{lemma}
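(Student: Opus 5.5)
The plan is to show that, almost surely, $(\xi^{(N)},\tOne^{(N)})_N$ is Cauchy for the norm $\nnorm{\bigcdot}$, reusing the telescoping and Kolmogorov argument from the proof of Lemma~\ref{lem_stoch}. We extend all sequences by zero beyond the cutoff, as in the paper's conventions.

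We first treat the noise. For $M>N$, the difference $\xi^{(M)}-\xi^{(N)}$ vanishes for $n\leqslant N$ and equals $\xi_{-n}$ for $N<n\leqslant M$. Hence
\[
\nnorm{\xi^{(M)}-\xi^{(N)},0}\leqslant \sup_{n>N}L^{-\kappa_\rms n}\|\xi_{-n}\|_\infty .
\]
We bound this by $L^{-(\kappa_\rms-\kappa')N}\sup_{n}L^{-\kappa' n}\|\xi_{-n}\|_\infty$ for some $\kappa'\in(0,\kappa_\rms)$. Lemma~\ref{lem_white_noise} with $\kappa'$ in place of $\kappa$ shows that the last supremum is finite almost surely. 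So the noise component converges, with limit $(\xi_{-n})_n$.

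For $\tOne$ we use the telescope $\tOne^{(M)}_{-n}-\tOne^{(N)}_{-n}=\sum_{K=N+1}^{M}\big(\tOne^{(K)}_{-n}-\tOne^{(K-1)}_{-n}\big)$, which only involves indices $K>N$. We also use a slightly smaller exponent $\kappa'<2\kappa_\rms$ and gain the factor $L^{-(2\kappa_\rms-\kappa')n}$ separately when needed. Repeating \eqref{eq_supp_stochest_first}--\eqref{eq:Kolmoinproof} together with the variance identity \eqref{eq_supp2_db_diff} gives
\[
\Big\|\sup_{M>N}\max_{n}L^{-\kappa' n}\big\|\tOne^{(M)}_{-n}-\tOne^{(N)}_{-n}\big\|_\infty\Big\|_{L^p(\Omega)}\lesssim p\sum_{K>N}\sum_{n=0}^{K}L^{(d/p)n}L^{-\alpha(K-n)}L^{-\kappa' n}.
\]
Choosing $p$ large and $\kappa'<\alpha$, the double sum is bounded by $C L^{-cN}$ for some $c>0$. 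Summing these bounds over $N$ and applying Borel--Cantelli, or directly monotone convergence since the tail supremum is decreasing in $N$, shows that $\sup_{M>N}\max_n L^{-\kappa' n}\|\tOne^{(M)}_{-n}-\tOne^{(N)}_{-n}\|_\infty\to0$ almost surely.

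It remains to pass to the norm $\nnorm{\bigcdot}$, which involves $|\tOne|^{1/2}$ and the weight $L^{-\kappa_\rms n}$. The quantity to control is $\sup_n L^{-\kappa_\rms n}\|\tOne^{(M)}_{-n}-\tOne^{(N)}_{-n}\|_\infty^{1/2}$. This is bounded by the square root of $\sup_n L^{-2\kappa_\rms n}\|\cdot\|_\infty$, which tends to zero by the previous step since $\kappa'\leqslant 2\kappa_\rms$. The limit $\tOne^\star_{-n}$ is then the almost sure uniform limit at each scale, and it inherits the bound from Lemma~\ref{lem_stoch}. The main point to check is that the index manipulation in \eqref{eq_supp1_dbdiff}, restricted to $K>N$, still yields geometric decay in $N$; this holds because of the factor $L^{-\alpha(K-n)}L^{-\kappa' n}\leqslant L^{-\min(\alpha,\kappa')K}$. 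No dependence on $g$ enters, which is why the statement holds for all $g$.
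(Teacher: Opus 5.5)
Your proposal is correct and follows essentially the same route as the paper: you telescope $\tOne^{(M)}_{\!\!-n}-\tOne^{(N)}_{\!\!-n}$ over $K>N$, replace suprema by sums, and apply Gaussian hypercontractivity together with the variance identity \eqref{eq_supp2_db_diff}, which gives geometric decay in $N$; you also spell out the noise component and the passage to $|\tOne|^{1/2}$, which the paper leaves implicit. The difference is only in packaging: the paper puts the rate into a single moment bound for $\sup_{M,N}L^{\kappa N/4}\max_n L^{-\kappa n}\|\tOne^{(N+M)}_{\!\!-n}-\tOne^{(N)}_{\!\!-n}\|_\infty$ with $\kappa=2\kappa_\rms$, while you bound each tail separately and conclude by Borel--Cantelli. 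One small correction: your tail supremum is not itself monotone in $N$, but it is dominated by the monotone tail sum $\sum_{K>N}\sum_{n}L^{-\kappa' n}\|\tOne^{(K)}_{\!\!-n}-\tOne^{(K-1)}_{\!\!-n}\|_\infty$, which is almost surely finite at $N=0$ and hence tends to zero, so either closing argument works.
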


\begin{proof}
	It is not difficult to check that $ ( \xi^{(N)})_{N \in \NN}$ converges almost surely to $ (\xi_{-n})_{n \in \NN} $ in
	$\nnorm{\bigcdot} $.
	To conclude convergence of the enhanced noise, it is therefore only left to show that 
$ (\tOne^{(N)})_{N \in \NN}$ forms a Cauchy sequence. More precisely, we prove that
$ \PP$ --almost surely
\begin{equation}\label{eq_cauchy_db}
	\begin{aligned}
		\sup_{ M,N \in \NN  } L^{ \kappa/4 N}\max_{0\leqslant n\leqslant
		{N+M}}
		L^{- \kappa n} \sup_{x \in
                    \Lambda^{n}} |\tOne^{(N+M)}_{\! \!-n}-\tOne^{(N)}_{\! \!-n} |(x)<
		    \infty\,,
	\end{aligned}
\end{equation}
    where $ \kappa:= 2 \kappa_{\rms}$ for convenience.
The proof carries over almost verbatim from the proof
    of Lemma~\ref{lem_stoch}, however, we include the main steps for a
    complete account.
    
    Again, we start from the telescope representation
    \begin{equs}
        \tOne^{(N+M)}_{\! \!-n}-\tOne^{(N)}_{\! \!-n} =\sum_{K=(N\vee n)+1}^{N+M}  \big( \tOne_{\! \!-n}^{(K)}-
        \tOne_{\! \!-n}^{(K-1)}\big)\,,
    \end{equs}
    which allows us to crudely replace suprema with sums (as we did in
    \eqref{eq_supp1_dbdiff}), such that
    \begin{equs}
        \begin{aligned}
            \sup_{M,N \in \NN} L^{ \kappa/4 N}
	    \max_{0\leqslant n \leqslant {N+M}}
            L^{-\kappa n} \|\tOne^{(N+M)}_{\! \!-n}-\tOne^{(N)}_{\! \!-n} \|_{\infty}
	     & \leqslant {\sup_{M \in \NN} } \sum_{N =0}^{\infty}
            L^{ \kappa/4 N}
    {\sum_{n =0}^{N+M}}
	    L^{ - \kappa n}
	    \sum_{K=(N\vee n)+1}^{{N+M}}   \| \tOne_{\! \!-n}^{(K)}-
            \tOne_{\! \!-n}^{(K-1)}\|_{\infty}\\
	    & \leqslant \sum_{N =0}^{\infty}
            L^{ \kappa/4 N}
	    \sum_{K=N+1}^{\infty}  
		\sum_{n =0}^{K}
	    L^{ - \kappa n}
\| \tOne_{\! \!-n}^{(K)}-
            \tOne_{\! \!-n}^{(K-1)}\|_{\infty}\,.
    \end{aligned}
    \end{equs}
     Thus, in complete analogy to \eqref{eq_supp_stochest_first}, we have
    \begin{equation*}
        \begin{aligned}
             & \Big\|
            \sup_{M,N \in \NN} L^{ \kappa/4 N}
	    \max_{0\leqslant n \leqslant {N+M}}
            L^{-\kappa n} \|\tOne^{(N+M)}_{\! \!-n}-\tOne^{(N)}_{\! \!-n} \|_{\infty}
            \Big\|_{L^{p} ( \Omega)}                          \\
             & \leqslant
            \sum_{N =0}^{\infty}
            L^{ \kappa/4 N}
            {\sum_{K=N+1}^{\infty} 
	        \sum_{n =0}^{K}}
    L^{ - \kappa n} \Big\| \sup_{x \in \Lambda^{n}} | \tOne_{\! \!-n}^{(K)}-
            \tOne_{\! \!-n}^{(K-1)}|(x) \Big\|_{L^{p} ( \Omega)} \\
             & \leqslant
            \sum_{N =0}^{\infty} L^{\kappa/4 N}
            \sum_{K= N+1}^{ \infty}\sum_{n=0}^KL^{(d/p-\kappa )n}\EE\big[\big|\tOne_{\! \!-n}^{(K)}-
                \tOne_{\! \!-n}^{(K-1)}\big|^p(0)\big]^{1/p}
            \,.
        \end{aligned}
    \end{equation*}
    Applying additionally hypercontractivity in combination with \eqref{eq_supp2_db_diff}, we conclude
    \begin{equs}
        \begin{aligned}
            \Big\|
            \sup_{M,N \in \NN} L^{ \kappa/4 N}
	    \max_{0\leqslant n \leqslant {N+M}}
            L^{-\kappa n} \|\tOne^{(N+M)}_{\! \!-n}-\tOne^{(N)}_{\! \!-n} \|_{\infty}
            \Big\|_{L^{p} ( \Omega)}
             & \lesssim p
            \sum_{N =0}^{\infty} L^{\kappa/4 N}
            \sum_{K= N+1}^{ \infty}\sum_{n=0}^KL^{(d/p-\kappa )n}
            L^{ - \kappa/2 (K-n)} \\
             & \leqslant  p
            \sum_{N =0}^{\infty} L^{-\kappa/4 N}
            \sum_{K= 1}^{ \infty}
            L^{- \kappa/2 K}
            \sum_{n=0}^\infty L^{(d/p-\kappa/2 )n}
            \lesssim p\,,
        \end{aligned}
    \end{equs}
    where we performed an index shift ($K \mapsto K-N$) in the second inequality,
    and assumed that $p$ is large enough such that the most inner sum converges.
    This moment estimate concludes in particular that \eqref{eq_cauchy_db} is
    finite, $ \PP$--almost surely.
\end{proof}

Likewise, the remainder $ \Psi^{(N)}$ forms a Cauchy sequence on $ \Omega_{g} $, and
converges. 
\begin{lemma}[Convergence of the remainder]\label{lem_remainder_conv}
	 Let $d=2$.
    For every $r \geqslant 1$, there exists an odd $L \in \NN$, sufficiently large
    (and depending only on $r$), such that for every $g \in (0,1]$ 
    the limit   $ \Psi^{\star} = (\Psi_{-n}^{\star})_{n \in \NN} $ exists on $
    \Omega_{g}$, such that
    \begin{equation*}
        \begin{aligned}
            \lim_{N \to \infty} \max_{n \geqslant 0}
            L^{- \kappa_{\rms} n}
            \sup_{x \in \Lambda^{n}}
            | \Psi_{-n}^{(N)} (x)- \Psi^{\star}_{-n} (x)|^{1/3}=0\,,
            \qquad \text{on } \Omega_{g}\,,
        \end{aligned}
    \end{equation*}
    where $ \kappa_{\rms}>0$ is the same as in \eqref{eq_stoch_norm}.
    Here we set $ \Psi_{-n}^{(N)} \equiv 0$ for all $ n >N$.
\end{lemma}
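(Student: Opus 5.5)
We plan to show that $(\Psi^{(N)})_{N}$ is Cauchy on $\Omega_g$ in the weighted norm, mirroring the induction of Lemma~\ref{lem_fp} but applied to differences. Fix $M,N$ and write $\delta\Psi_{-n}\eqdef \Psi^{(N+M)}_{-n}-\Psi^{(N)}_{-n}$, and similarly $\delta\mR_{-n+1}$, $\delta\xi_{-n}\eqdef\xi^{(N+M)}_{-n}-\xi^{(N)}_{-n}$, $\delta\tOne_{\!\!-n}$. The deterministic coefficients $\lambda_{-n},\mu_{-n},\gamma_{-n}$ are independent of the cutoff by \eqref{eq_pert_para}, so they cancel in differences. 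The target is a bound of the form
\begin{equs}
\max_{n\geqslant0}L^{-3\kappa_\rms n}\|\delta\Psi_{-n}\|_\infty\lesssim g^{-1}\big(L^{-\theta N}+\nnorm{\xi^{(N+M)}-\xi^{(N)},\tOne^{(N+M)}-\tOne^{(N)}}'\big)\,,
\end{equs}
with $\nnorm{\bigcdot}'$ a linear version of \eqref{eq_stoch_norm} (no square root on the $\tOne$ part, weighted by $L^{-2\kappa_\rms n}$). The right-hand side vanishes as $N\to\infty$ by Lemma~\ref{lem_stoch_conv}, which gives rate $L^{-\kappa/4\,N}$ for $\tOne$. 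Taking the $1/3$ power and using completeness then yields $\Psi^\star$ together with the claimed convergence.

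The induction runs downward in $n$. For $N<n\leqslant N+M$ we have $\Psi^{(N)}_{-n}=0$ and $\xi^{(N)}_{-n}=\tOne^{(N)}_{\!\!-n}=0$. Here $\delta\Psi_{-n}=\Psi^{(N+M)}_{-n}$, which Lemma~\ref{lem_fp} (in the stronger form \eqref{e_ind_basis}) bounds by $L^{3\kappa_\rms n}(2g)^{-1}\nnorm{\bigcdot}^2_{n+1}$. This is where the scale weight matters: because the weight is $L^{-\kappa_\rms n}$ in the norm but the bound only grows like $L^{3\kappa_\rms n}$, we will measure differences with the slightly stronger weight $L^{-4\kappa_\rms n}$. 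This gives a factor $L^{-\kappa_\rms N}$ for the initial data at scale $N$, and after taking cube roots the loss is harmless. At $n=N$ the difference picks up the one-step contribution produced from $\xi_{-N-1},\dots$, which is again controlled by Lemma~\ref{lem_fp}.

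For $n\leqslant N$ we subtract the two copies of \eqref{e_supp1_rho} and of the $\Psi$-line of \eqref{e_rg_system}. Every term is multilinear in $(\xi,\tOne,\Psi,\mR)$, so the difference is a sum of terms in which exactly one factor is a $\delta$ and the others are controlled by \eqref{eq:ongoodevent}, \eqref{e_ind_basis}, \eqref{eq_supp123_R}. For $\delta\mR_{-n+1}$ we get the same contraction factor $(1+3L^{-1/2}C_r)L^{-1/2}\leqslant\frac12$ as in \eqref{eq_supp123_R}, plus source terms linear in $\delta\xi_{-n},\delta\tOne_{\!\!-n},\lambda_{-n}\delta\Psi_{-n}$. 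This gives $\|\delta\mR_{-n+1}\|_\infty\lesssim L^{2\kappa_\rms n}g^{-1}(\ldots)$. Inserting this into the $\Psi$-recursion, the factor $L^{2-3\alpha}=L^{-1}$ times a constant (chosen $\leqslant\frac12$ for $L$ large, depending only on $r$) makes the map $\delta\Psi_{-n}\mapsto\delta\Psi_{-n+1}$ contractive, up to source terms. Summing the geometric series down to $n=0$ gives the bound.

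The main obstacle is bookkeeping the scale weights. The stochastic differences are small only in the $L^{-\kappa n}$-weighted sense uniformly in $n$, while the propagation multiplies by $L^{3\kappa_\rms n}$ growth factors. We must ensure that the contraction $L^{-1}$ at each step beats the ratio of weights $L^{\pm c\kappa_\rms}$ between consecutive scales, which holds for $\kappa_\rms$ small and $L$ large. We must also check that no term produces a $\delta$ multiplied by a quantity that is only bounded in the squared sense, such as $\|\xi\|^2$. The first thing to try is to run the estimate with the linear weight $L^{-4\kappa_\rms n}$ throughout and verify each product term individually.
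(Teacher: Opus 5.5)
Your overall scheme is the paper's: a downward induction on the differences, first bounding $\delta\mR_{-n+1}$ through the fixed-point equation \eqref{e_supp1_rho} with contraction $\leqslant\frac12$, then $\delta\Psi_{-n+1}$ using the factor $L^{2-3\alpha}$. The gap is in how you treat the scales $N<n\leqslant N+M$. You assume $\Psi^{(N+M)}_{-n}$ is only of order one in the weight $L^{-3\kappa_\rms n}$, and so you switch to $L^{-4\kappa_\rms n}$. That does not prove the lemma. It gives $\max_n L^{-\frac43\kappa_\rms n}\sup_x|\Psi^{(N)}_{-n}-\Psi^\star_{-n}|^{1/3}\to0$, which is strictly weaker than the claim, since cube roots do not undo a change of weight. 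It also contradicts your own target bound, which is stated in the weight $L^{-3\kappa_\rms n}$. Nor can you upgrade afterwards using only the uniform bound of Lemma~\ref{lem_fp}. Splitting into $n\leqslant n_0$ and $n>n_0$, you would need $\sup_N\max_{n>n_0}L^{-3\kappa_\rms n}\|\Psi^{(N)}_{-n}\|_\infty\to0$ as $n_0\to\infty$, and that requires exactly the ingredient you are missing.

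The missing observation is that the data at scales above $N$ is already small in the original weight. For $n\geqslant N$, the truncated quantities $\xi^{(N)}_{-k}$ and $\tOne^{(N)}_{\!\!-k}$ vanish for every $k\geqslant n+1$. So the bound $L^{3\kappa_\rms n}(2g)^{-1}\nnorm{\xi^{(N+M)},\tOne^{(N+M)}}{}^2_{n+1}$ from \eqref{e_ind_basis} equals $L^{3\kappa_\rms n}(2g)^{-1}\nnorm{(\xi^{(N+M)},\tOne^{(N+M)})-(\xi^{(N)},\tOne^{(N)})}{}^2_{n+1}$. Lemma~\ref{lem_stoch_conv} controls this difference over all $0\leqslant n\leqslant N+M$, including the scales above $N$. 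For the second-chaos part it gives the rate $L^{-\kappa_\rms N/2}$, uniformly in $M$. The tail of the $\xi$-part vanishes because white noise is controlled with any positive $\kappa$ (Lemma~\ref{lem_white_noise} with $\kappa=\kappa_\rms/2$). This is exactly how the paper starts its induction: its hypothesis $\max_{n\leqslant k\leqslant N+M}L^{-3\kappa_\rms k}\|\Psi^{(N+M)}_{-k}-\Psi^{(N)}_{-k}\|_\infty\leqslant(2g)^{-1}\nnorm{\bigcdot}{}^2_{n+1}$, with the norm taken of the difference, reduces to \eqref{e_ind_basis} for $N\leqslant n\leqslant N+M$. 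With this, you need neither the extra weight nor the $L^{-\theta N}$ term, and the rest of your induction runs in the stated weight $L^{-3\kappa_\rms n}$. Your linear norm $\nnorm{\bigcdot}'$ also works, since on $\Omega_g$ the squared difference norm is at most $g^{-1}$ times it.
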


\begin{proof}
	In the following, we assume to be on the good event $\Omega_{g}$.
    Again, we will prove that $ ( \Psi_{-n}^{(N)})_{n \in \NN}$ is Cauchy.
    More precisely, we show that
    \begin{equs}\label{eq_supp_rem_conv}
	    \begin{aligned}
		    &{\max_{0 \leqslant n \leqslant N+M}}
        L^{- \kappa_{\rms} \, n }
        \sup_{x \in \Lambda^{n}}
        | \Psi^{(N+M)}_{-n} - \Psi^{(N)}_{-n}|^{1/3}(x)
		    \leqslant 
	(2g)^{-1/3}
	\nnorm{ (\xi^{(N+M)},
            \tOne^{(N+M)})-(\xi^{(N)}, \tOne^{(N)})}{}^{2/3}
        \,,\qquad
	    \end{aligned}
    \end{equs}
    cf. Lemma~\ref{lem_fp}.
    Notice that by Lemma~\ref{lem_stoch_conv},
    the right--hand side vanishes on $ \Omega_{g}$, as $ N \to \infty$.\\

    We follow the same steps as in the proof of Lemma~\ref{lem_fp}, using the norm
    $\nnorm{\bigcdot}_{n}$ from \eqref{eq_triple_n} which only considers scales above $n$.
    First, we assume that the following hypothesis is true
    \begin{equation*}
        \begin{aligned}
             & \max_{n \leqslant k \leqslant N+M}
            L^{- 3\kappa_{\rms} \, k }
            \sup_{x \in \Lambda^{k}}
            | \Psi^{(N+M)}_{-k} (x) - \Psi^{(N)}_{-k} (x)|
            \leqslant
	    ( 2 g)^{-1}
             {\nnorm{ (\xi^{(N+M)},
	     \tOne^{(N+M)})-(\xi^{(N)}, \tOne^{(N)})}{}^{2}_{n+1}}\,.
        \end{aligned}
    \end{equation*}
    for some $0 \leqslant n < N $. Notice that for $N\leqslant n \leqslant  N+M$ the
    statement reduces to \eqref{e_ind_basis}, because in this case $ \Psi^{(N)}_{-n}\equiv
 0 $ and $( \xi^{(N)}_{-k}, \tOne_{-k}^{(N)})\equiv0$ for every $k\geqslant n+1$. We proceed
       inductively.

    \begin{enumerate}
        \item \textit{Remainder fluctuation field.} Using again \eqref{e_supp1_rho} together with the fact that $ | P_{1}f |
                  \leqslant 2 \| f\|_{ \infty}$, yields (after adding and subtracting multiple zero terms
              and applying the triangle inequality)
              \begin{equation*}
                  \begin{aligned}
                       & \| \mR_{-n+1}^{(N+M)}- \mR_{-n+1}^{(N)} \|_{\infty}                                             \\
                       & \leqslant
                      2 \big(
                      \lambda_{-n} \|\xi_{-n}^{(N)} \|_{\infty}
                      +
                      \lambda_{-n}^{2} \|\tOne_{\!\! -n}^{(N)} \|_{\infty}
                      +
                      \lambda_{-n}^{3} \|\Psi_{-n}^{(N)}\|_{\infty}
                      +
                      \mu_{-n}^{(N)}
                      \big) \|\mR_{-n+1}^{(N+M)}- \mR_{-n +1}^{(N)}\|_{\infty}                                           \\
                       & \quad +
                      2 \Big(
                      \lambda_{-n}^{2} \|\tOne_{\!\! -n}^{(N+M)}-\tOne_{\!\! -n}^{(N)} \|_{\infty}
                      +
                      \lambda_{-n}^{3} \|\Psi_{-n}^{(N+M)}-\Psi_{-n}^{(N)}\|_{\infty}
                      \Big) \|\mR_{-n+1}^{(N+M)}\|_{\infty}                                                              \\
                       & \quad +
                      4\Big(
                      \|\tOne_{\!\! -n}^{(N+M)}-\tOne_{\!\! -n}^{(N)}\|_{\infty}
                      +\lambda_{-n} \|\Psi_{-n}^{(N+M)}- \Psi_{-n}^{(N)} \|_{\infty}
                      +
                      \lambda_{-n} \|\tOne_{\!\! -n}^{(N+M)}- \tOne_{\!\! -n}^{(N)} \|_{\infty}   \|\xi_{-n} \|_{\infty} \\
                       & \quad \quad\quad  +
                      \lambda_{-n}^{ 2} \|\Psi_{-n}^{(N+M)}-  \Psi_{-n}^{(N)} \|_{\infty} \| \xi_{-n} \|_{ \infty }
                      \Big)
                      \,.
                  \end{aligned}
              \end{equation*}
              Because each term can be upper bounded using the definition of
	      $\Omega_{g}$,
	      \eqref{e_ind_basis}, or the induction
              hypothesis, we see that
	      \begin{equation}\label{eq_R_diff}
                  \begin{aligned}
		      &\| \mR_{-n+1}^{(N+M)}- \mR_{-n+1}^{(N)} \|_{\infty}\\
		       & \leqslant 
		       \tfrac{1}{2}\| \mR_{-n+1}^{(N+M)}- \mR_{-n+1}^{(N)} \|_{\infty} 
		       + 
		        8
                      L^{2\kappa_\rms n}g^{-1} {\nnorm{ (\xi^{(N+M)},
		      \tOne^{(N+M)}) - ( \xi^{(N)}, \tOne^{(N)} ) }_{n}}
\\
		       & \leqslant 
		       16
                      L^{2\kappa_\rms n}g^{-1} {\nnorm{ (\xi^{(N+M)},
		      \tOne^{(N+M)}) - ( \xi^{(N)}, \tOne^{(N)} ) }_{n}}
		       \,,
                  \end{aligned}
              \end{equation}
	      which follows verbatim from the steps performed in \eqref{eq_supp123_R}.
	      
        \item \textit{Remainder effective force.} Likewise, we can estimate the difference of the $\Psi$--terms using
              \eqref{e_rg_system}:
              \begin{equs}
                  \begin{aligned}
                      \|\Psi_{-n+1}^{(N+M)}-\Psi_{-n+1}^{(N)} \|_{\infty}
                       & \leqslant
                      L^{2- 3 \alpha} \big(
                      1+
                      2\lambda_{-n} \| \xi_{-n}\|_{\infty}+
                      \lambda_{-n}^{2} \|\mR_{-n+1}^{(N)}\|_{\infty}
                      \big)
                      \| \Psi_{-n}^{(N+M)}- \Psi_{-n}^{(N)}\|_{\infty}
                      \\
                       & \quad+
                      L^{2- 3 \alpha}
                      \lambda_{-n}^{2} \|\mR_{-n+1}^{(N+M)}-\mR_{-n+1}^{(N)}\|_{\infty}
                      \| \Psi_{-n}^{(N+M)} \|_{\infty}
                      \\
                       & \quad+
                      L^{2- 3 \alpha} \big(
                      2 \|\tOne_{\!\! -n}^{(N+M)}-\tOne_{\!\! -n}^{(N)}\|_{\infty} \| \xi_{-n}\|_{\infty}
                      +
                      \| \xi_{-n} \|_{ \infty} \|\mR_{-n+1}^{(M+N)}- \mR_{-n+1}^{(N)}\|_{\infty}\big) \\
                       & \quad
                      +L^{2- 3 \alpha} \big(
                      \lambda_{-n}\|\tOne_{\!\! -n}^{(N+M)} -\tOne_{\!\! -n}^{(N)}\|_{\infty}
                      \|\mR_{-n+1}^{(N)}\|_{\infty}                                                   \\
                       & \qquad\qquad+
                      \lambda_{-n}\|\tOne_{\!\! -n}^{(N+M)}\|_{\infty} \|\mR_{-n+1}^{(N+M)}-\mR_{-n+1}^{(N)}\|_{\infty}
                      \big)
                      \,.
                  \end{aligned}
              \end{equs}
              Therefore, using the induction hypothesis for increments of $
	      \Psi_{-n}^{(N)}$, the upper bound 
	      \eqref{eq_R_diff}, as well
	      as the definition of $ \Omega_{g}$ and \eqref{e_ind_basis}, we have
	      extended the induction hypothesis to $
	      \Psi_{-n+1}^{(N)}$.
    \end{enumerate}
    This concludes the proof.
\end{proof}

\subsection{Convergence of the solution and proof of Proposition~\ref{prop_main}}

Next, we conclude that also the trajectories $ (v^{(N)}_{-n})_{n \leqslant N}$ form a
	Cauchy sequence. 
	To this end, recall that $ w_{-n +1}^{(N)}=Q_{1} v_{-n}^{(N)}$ and $ z_{-n+1}^{(N)}= P_{1}
	v^{(N)}_{-n} $. Therefore, it suffices to prove
	\begin{equation}\label{eq_}
		\begin{aligned}
			\sup_{M, N \in \NN} L^{\kappa/4  N}	\Big( \max_{1\leqslant
				n\leqslant N+M} L^{- \kappa  n}  \sup_{x\in \Lambda^{n}}
	\big| 
	\big(z_{-n+1}^{(N+M)}- z_{-n+1}^{(N)}\big)
	(x)\big|\Big)< \infty\,,
		\end{aligned}
	\end{equation}
with $ \kappa= 3 \kappa_{\rms}$.
	As we have done so far, we also follow here the proof of the analogue in
	Section~\ref{sec_rig_RG},
	namely the proof of Lemma~\ref{lem_aid_besov}, to control the differences of the
	two fluctuation fields. We avoid repetition and keep the argument short.

\begin{proof}[Proof of Lemma~\ref{lem_supp_conv_v}]
		First, by means of \eqref{eq_supp2_z} and $ \| P_{1}f \|_{\infty} \leqslant 2 \| f \|_{\infty}$, we have 
	\begin{equs}
		\begin{aligned}
			&\| z_{-n+1}^{(N+M)} - z_{-n+1}^{(N)}\|_{\infty}\\
			&\leqslant 
			2 \big(\lambda_{-n} \| \xi_{-n} \|_{\infty}+  \lambda_{-n}^{2} \| \tOne^{(N)}_{-n}\|_{\infty} +
				\lambda_{-n}^{3} \| \Psi_{-n}^{(N)} \|_{\infty} +
			\mu_{-n}\big) \big(\| z_{-n+1}^{(N+M)} -
			z_{-n+1}^{(N)}\|_{\infty}+\| w_{-n+1}^{(N+M)} - w_{-n+1}^{(N)}\|_{\infty} \big)\\
			& \quad + 	\mu_{-n} \| z_{-n+1}^{(N+M)} -z_{-n+1}^{(N)}\|_{\infty}\\
			& \quad + 2 \big(  \lambda_{-n}^{2} \|\tOne^{(N+M)}_{-n}- \tOne^{(N)}_{-n}\|_{\infty} +
				\lambda_{-n}^{3} \| \Psi_{-n}^{(N+M)}-\Psi_{-n}^{(N)}
			\|_{\infty}\big) \big(\| z_{-n +1}^{(N+M)} \|_{\infty}+ \|
		w_{-n +1}^{(N+M)}\|_{\infty}\big)\,.
		\end{aligned}
	\end{equs}
	Using Lemmas~\ref{lem_stoch},~\ref{lem_fp} and~\ref{lem_aid_besov}, and
	rearranging the terms appropriately,
	yields  
	\begin{equation*}
		\begin{aligned}
			\| z_{-n+1}^{(N+M)} - z_{-n+1}^{(N)}\|_{\infty}
			&\leqslant 
2 \Big( 4L^{ (-  \alpha + \kappa_{\rms}) n}\| w_{-n+1}^{(N+M)} -
				w_{-n+1}^{(N)}\|_{\infty}\\
			& \quad\quad + 16 L^{(-\alpha+ 3 \kappa_{\rms}) n } g^{2} \big(\|\tOne^{(N+M)}_{-n}- \tOne^{(N)}_{-n}\|_{\infty}
+\lambda_{-n}	\| \Psi_{-n}^{(N+M)}-\Psi_{-n}^{(N)}
			\|_{\infty}
			\big)\Big)\,.
       		\end{aligned}
	\end{equation*}
	Notice that the second line can be bounded by an arbitrary small constant
	for $N$ large enough, see Lemmas~\ref{lem_stoch_conv} and~\ref{lem_remainder_conv}.
	Thus, we have an estimate analogous to~\eqref{eq_bound_zWithQ}.

	The remainder of the proof follows by the same induction argument as for Lemma~\ref{lem_aid_besov}.
\end{proof}

Finally, we can summarise our findings in the following proof:

\begin{proof}[Proof of Proposition~\ref{prop_main}]
	Restricting ourselves to the event $ \Omega_{g}$, 
	we proved that $ v^{(N)}$ converges to $ v^{\star} = (v^{\star}_{-n})_{n \in \NN}
	$, see Lemma~\ref{lem_supp_conv_v}. Moreover,  we established convergence of the
	force coefficients 
	\begin{equation*}
		\begin{aligned}
			\lim_{ N \to \infty} \tOne_{\!\! -n}^{(N)}=
			\tOne^{\star}_{\!\!-n}\,, \quad \text{and} \quad 
			\lim_{N \to \infty} \Psi_{-n}^{(N)}= \Psi^{\star}_{-n}\,,
		\end{aligned}
	\end{equation*}
	in Lemmas~\ref{lem_stoch_conv} and~\ref{lem_remainder_conv}.
	It is only left to notice that $ v_{-n}^{\star}$  solves indeed the limiting
	equation 
	 \begin{equation*}
        \begin{aligned}
            (- \Delta_{H}) v_{-n}^{\star} =
            \big(\lambda_{-n} \xi_{-n}
            + \lambda_{-n}^{2} \tOne_{\! \! -n}^{\, \star}
            + \lambda_{-n}^{3} \Psi^{\star}_{-n} + \mu_{-n}\big)v^{\star}_{-n}  +
	    \gamma_{-n}\,,
        \end{aligned}
    \end{equation*}
	since the equation can be written in terms of finite linear combinations of all
	quantities involved. 
    Notice that the constants $ \lambda_{-n}, \mu_{-n}$, and $ \gamma_{-n}$ were
    independent of $N$.
    Lastly, we note that $ v^{(N)}_{-n} = L^{- \alpha (m-n)} S_{m-n} Q_{m-n}^{(N)} v^{(N)}_{-m}$ for every
    $n <m \leqslant N$, which remains true when taking limits.
\end{proof}

\appendix
 
\section{Moment calculations}

In this appendix, we collect several moment calculations which are useful
throughout the paper.

\begin{lemma}\label{lem_expect_db}
    Let $N \in \NN$, then
    for every $n <N $
    \begin{equation*}
        \begin{aligned}
            \EE\big[
                Q_{ N-n} ( \xi_{-N} \Gamma_{N-n}
                \xi_{-N}) (x)
                \big] =
            ( 1- L^{-d})
            \sum_{k =1}^{N-n} L^{(2-d)(k-1)}
            \,, \qquad \forall x \in \Lambda^{N}_{N-n}\,,
        \end{aligned}
    \end{equation*}
    where $ \Gamma_{N-n}$ was defined in \eqref{eq_def_Gamma}.
    In particular, $ \EE\big[
            Q_{ 1} ( \xi_{-n} P_{1}
            \xi_{-n})
            \big] = ( 1- L^{-d})$.
\end{lemma}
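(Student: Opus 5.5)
The plan is a direct covariance computation that uses only linearity of expectation and the fact that the $\xi_{-N}(y)$, $y\in\Lambda^N$, are i.i.d.\ standard normal (Example~\ref{exmpl_whitenoise_cont}). First I would expand the fluctuation propagator from \eqref{eq_def_Gamma}, $\Gamma_{N-n}=\sum_{k=1}^{N-n}L^{2(k-1)}P_k$ with $P_k=Q_{k-1}-Q_k$. Since $Q_{N-n}$ is a deterministic linear averaging operator, it commutes with $\EE$. It therefore suffices to compute the pointwise quantity $\EE[\xi_{-N}(y)\,Q_j\xi_{-N}(y)]$ for each $y\in\Lambda^N$ and $0\leqslant j\leqslant N$.

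For this pointwise quantity I would use the averaging formula \eqref{eq_Qn_BBS}. Here $Q_j\xi_{-N}(y)$ is understood as $Q_j\xi_{-N}$ evaluated at the centre of the block $\bigsquare_j$ containing $y$, that is, as the element of $\mC(\Lambda^N)$ that is constant on blocks. This gives
\begin{equs}
\EE[\xi_{-N}(y)\,Q_j\xi_{-N}(y)] = L^{-jd}\sum_{y'\in\bigsquare_j}\EE[\xi_{-N}(y)\xi_{-N}(y')] = L^{-jd},
\end{equs}
because exactly one term, $y'=y$, survives by independence and has unit variance. Consequently
\begin{equs}
\EE[\xi_{-N}(y)\,P_k\xi_{-N}(y)] = L^{-d(k-1)}-L^{-dk} = (1-L^{-d})L^{-d(k-1)}.
\end{equs}
Multiplying by $L^{2(k-1)}$ and summing over $k$ shows that $\EE[\xi_{-N}\,\Gamma_{N-n}\xi_{-N}]$ is the constant $(1-L^{-d})\sum_{k=1}^{N-n}L^{(2-d)(k-1)}$, independent of $y$. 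Applying $Q_{N-n}$, which maps constants to themselves, then yields the claim for every $x\in\Lambda^N_{N-n}$.

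For the special case I would apply the same computation on $\Lambda^{n}$ with $\xi_{-n}$ in place of $\xi_{-N}$; this is legitimate because $\xi_{-n}$ is again an i.i.d.\ standard normal family. The propagator is then $\Gamma_1=P_1$, and only the $k=1$ term remains, which equals $1-L^{-d}$.

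There is no real obstacle here. The one point to handle carefully is the identification of $Q_j$ as a function on the fine lattice, constant on blocks. Once that is fixed, the self-correlation of $y$ inside its own block produces exactly the factor $L^{-jd}$.
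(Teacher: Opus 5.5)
Your proposal is correct and follows essentially the same route as the paper: you expand $\Gamma_{N-n}=\sum_{k=1}^{N-n} L^{2(k-1)}(Q_{k-1}-Q_k)$, use independence and unit variance to get $\EE[\xi_{-N}(y)\,Q_j\xi_{-N}(y)]=L^{-dj}$, sum over $k$, and read off the special case from the single term $k=1$. The only difference is cosmetic: you check directly that the pointwise expectation does not depend on $y$, so that $Q_{N-n}$ acts on a constant, whereas the paper invokes spatial homogeneity in law to reduce to the site $0$.
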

\begin{proof}
    First, we notice that due to spatial homogeneity (in law), for every $ x \in
        \Lambda^{N}_{N-n}$
    \begin{equation*}
        \begin{aligned}
            \EE\big[ Q_{ N-n} ( \xi_{-N} \Gamma_{N-n}
                \xi_{-N}) (x) \big]
            =
            \EE\big[  \xi_{-N} (0) \Gamma_{N-n}
                \xi_{-N} (0)\big]
            =
            \sum_{k =1}^{N-n} L^{2(k-1)} \EE[ \xi_{-N}(0) P_{k} \xi_{-N}(0)]\,.
        \end{aligned}
    \end{equation*}
    Next, we observe that
    \begin{equation}\label{eq_supp1_dbmean}
        \begin{aligned}
            \EE[ \xi_{-N}(0) P_{k} \xi_{-N} (0)]
            =
            \EE[ \xi_{-N}(0) (Q_{k-1}-Q_{k}) \xi_{-N}(0)]
            =
            L^{-d(k-1)} (1- L^{- d})\,,
        \end{aligned}
    \end{equation}
    where we used that $ \xi_{-N}$ is a field of independent unit variance random
    variables, which implies
    \begin{equation}\label{eq_supp2_dbmean}
        \begin{aligned}
            \EE[ \xi_{-N}(0) Q_{ k} \xi_{-N}(0)]
            =
            L^{- d k}
            \sum_{x \in \bigsquare_{k}(0) }
            \EE [ \xi_{-N}(0) \xi_{-N}(x)]
            = L^{-dk}\,.
        \end{aligned}
    \end{equation}
    Recall that $ \bigsquare_{k}(0)$ denotes the block of side--length
    $L^{k}$ centred at $0$.
    Combining \eqref{eq_supp1_dbmean} and \eqref{eq_supp2_dbmean} yields
    \begin{equation*}
        \begin{aligned}
            \EE\big[ Q_{ N-n} ( \xi_{-N} \Gamma_{N-n}
                \xi_{-N}) (x) \big]
            =
            ( 1- L^{-d})
            \sum_{k =1}^{N-n} L^{(2-d)(k-1)}
            \,,
        \end{aligned}
    \end{equation*}
    which concludes the first statement.
    Lastly, we notice that \eqref{eq_supp1_dbmean} implies the second statement of
    the lemma by choosing $k=1$ and $N=n$.
\end{proof}

\begin{lemma}\label{lem_cov_P1xi}
    For every $n \in \NN$ and $ x \in \Lambda^{n}$, we have
    \begin{equation*}
        \begin{aligned}
            \EE \big[
                P_{1} \xi_{-n} (x) P_{1} \xi_{-n} (0)
                \big]
            =
            \begin{cases}
                \delta_{x,0} - L^{-d}\,, & \quad \text{if } x \in \bigsquare_{1} (0)\,, \\
                0\,,                     & \quad \text{otherwise}\,.
            \end{cases}
        \end{aligned}
    \end{equation*}
\end{lemma}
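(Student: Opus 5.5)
The plan is to compute the covariance directly from the definition $P_1 = Q_0 - Q_1 = 1 - Q_1$ on the unit lattice $\Lambda^n$, using only that $(\xi_{-n}(y))_{y\in\Lambda^n}$ are i.i.d.\ standard Gaussians, cf.~\eqref{e_def_white_noise_unit}. By \eqref{eq_Qn_BBS}, $Q_1\xi_{-n}(y) = L^{-d}\sum_{u\in\bigsquare_1(y)}\xi_{-n}(u)$, where $Q_1 f$ is viewed as a function on $\Lambda^n$ that is constant on each block $\bigsquare_1$. We have $P_1\xi_{-n}(x) = \xi_{-n}(x) - Q_1\xi_{-n}(x)$, so expanding bilinearly gives
\begin{equation*}
\EE\big[P_1\xi_{-n}(x)P_1\xi_{-n}(0)\big]
= \EE[\xi_{-n}(x)\xi_{-n}(0)] - \EE[\xi_{-n}(x)Q_1\xi_{-n}(0)] - \EE[Q_1\xi_{-n}(x)\xi_{-n}(0)] + \EE[Q_1\xi_{-n}(x)Q_1\xi_{-n}(0)].
\end{equation*}

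Each term is evaluated with $\EE[\xi_{-n}(u)\xi_{-n}(v)] = \delta_{u,v}$. The first term is $\delta_{x,0}$. The second is $L^{-d}\mathds{1}_{x\in\bigsquare_1(0)}$, and by symmetry the third is $L^{-d}\mathds{1}_{0\in\bigsquare_1(x)}$. Since the blocks $\bigsquare_1$ partition $\Lambda^n$, we have $0\in\bigsquare_1(x)$ if and only if $x$ and $0$ lie in the same block, i.e.\ $x\in\bigsquare_1(0)$. The fourth term equals $L^{-2d}\sum_{u\in\bigsquare_1(x)}\sum_{v\in\bigsquare_1(0)}\delta_{u,v}$. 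This is $L^{-2d}\cdot L^d = L^{-d}$ if the two blocks coincide, and $0$ otherwise.

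Summing the terms, the covariance for $x\in\bigsquare_1(0)$ is $\delta_{x,0} - 2L^{-d} + L^{-d} = \delta_{x,0} - L^{-d}$. For $x\notin\bigsquare_1(0)$ all four terms vanish, since then $x\neq 0$.

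The only point needing care is the notational convention that $\bigsquare_1(x)$ denotes the block of the coarse partition containing $x$, rather than a square centred at an arbitrary point $x$. This is what makes the relation "$x\in\bigsquare_1(0)$" symmetric, and it is consistent with $Q_1$ mapping into $\mC(\Lambda^n_1)$ and then being regarded as block-constant. With that convention fixed, the computation is immediate and I expect no real obstacle.
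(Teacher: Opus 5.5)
Your proposal is correct and follows essentially the paper's argument: a direct bilinear expansion of $P_1\xi_{-n}=\xi_{-n}-Q_1\xi_{-n}$, using that the $\xi_{-n}(y)$ are i.i.d.\ standard Gaussians, which gives $\delta_{x,0}-2L^{-d}+L^{-d}$ when $x\in\bigsquare_1(0)$. The only cosmetic difference is that the paper handles $x\notin\bigsquare_1(0)$ by noting the two variables are independent, rather than checking that each of the four terms vanishes.
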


\begin{proof}
    Clearly, if $ x $ and $0$ do not lie in the same box of side--length $L$, i.e.
    $ x \notin \bigsquare_{1} (0)$, then the two random variables are independent.
    On the other hand, if $ x \in \bigsquare_{1}(0)$, then $ \bigsquare_{1} (x) =
        \bigsquare_{1} (0)$ and
    \begin{equation*}
        \begin{aligned}
             & \EE \big[
                P_{1} \xi_{-n} (x) P_{1} \xi_{-n} (0)
            \big]                           \\
             & =
            \EE \bigg[
                \Big( L^{-d} \sum_{ y \in \bigsquare_{1}(0) \cap \Lambda^{n}} \xi_{-n} (y) -  \xi_{-n} (x) \Big)
                \Big(L^{-d}
                \sum_{ z \in \bigsquare_{1}(0) \cap \Lambda^{n}} \xi_{-n} (z)  - \xi_{-n}(0)\Big)
            \bigg]                          \\
             & =
            L^{-2d}
            \sum_{ y ,z \in \bigsquare_{1}(0) \cap \Lambda^{n}}
            \EE [ \xi_{-n} (y)\xi_{-n} (z)]
            + \EE [ \xi_{-n} (x) \xi_{-n} (0)]
            - 2 L^{-d}
            \sum_{ y \in \bigsquare_{1}(0) \cap \Lambda^{n}}
            \EE [ \xi_{-n} (y)\xi_{-n} (0)] \\
             & =
            L^{-d}+
            \delta_{x,0}
            - 2 L^{-d} = \delta_{x, 0}- L^{-d}\,,
        \end{aligned}
    \end{equation*}
    where we simply used that $ ( \xi_{-n} (x) )_{x \in \Lambda^{n}}$ are
    independent standard normal random variables.
\end{proof}

\begin{corollary}\label{cor_cov_P1db}
    For every $ n \in \NN$ and $ x \in \Lambda^{n}$,
    \begin{equation}\label{eq_supp2_db_cv}
        \begin{aligned}
            \EE \big[
                (P_{1}
                \xi_{-n})^{\diamond 2}(x)
                (P_{1}
                \xi_{-n})^{\diamond 2}(0)\big]
            =\begin{cases}
                 2 ( \delta_{x,0}- L^{-d} )^{2}\,, & \quad \text{if }   x\in \bigsquare_{1} (0)\,, \\
                 0\,,                              & \quad \text{otherwise}\,.
             \end{cases}
        \end{aligned}
    \end{equation}
\end{corollary}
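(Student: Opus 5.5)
We will derive the identity from Lemma~\ref{lem_cov_P1xi} via Wick's (Isserlis') formula for Gaussian vectors. Set $X\eqdef P_{1}\xi_{-n}(x)$ and $Y\eqdef P_{1}\xi_{-n}(0)$. These form a centred jointly Gaussian pair, since both are linear combinations of the i.i.d.\ standard normals $(\xi_{-n}(y))_{y\in\Lambda^n}$. The Wick square is $X^{\diamond 2}=X^{2}-\EE[X^{2}]$, and similarly for $Y$. So the quantity to compute is
\begin{equation*}
\EE[X^{\diamond2}Y^{\diamond2}]=\EE[X^2Y^2]-\EE[X^2]\,\EE[Y^2]\,.
\end{equation*}

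The first step is to apply Isserlis' theorem to the fourth moment:
\begin{equation*}
\EE[X^2Y^2]=\EE[X^2]\,\EE[Y^2]+2\,\EE[XY]^2\,.
\end{equation*}
Subtracting the product of the variances then gives $\EE[X^{\diamond2}Y^{\diamond2}]=2\,\EE[XY]^2$. This is the standard second-chaos identity $\EE[H_2(X)H_2(Y)]=2\,\mathrm{Cov}(X,Y)^2$.

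The second step is to insert the covariance from Lemma~\ref{lem_cov_P1xi}, which splits into two cases. If $x\in\bigsquare_1(0)$, then $\EE[XY]=\delta_{x,0}-L^{-d}$, and the expression equals $2(\delta_{x,0}-L^{-d})^2$. If $x\notin\bigsquare_1(0)$, the covariance vanishes, so the expression is $0$. In the latter case one can also argue directly: $X$ and $Y$ depend on disjoint blocks of the noise, hence are independent, and both Wick squares are centred.

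No step is a genuine obstacle. The only point needing care is to remember that the Wick product subtracts the expectation. This removes the disconnected pairing $\EE[X^2]\EE[Y^2]$ and leaves only the two cross pairings, which produce the factor $2$.
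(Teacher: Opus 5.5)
Your proof is correct and follows the paper's argument. The paper likewise reduces the expectation to $2\,\EE\big[P_1\xi_{-n}(x)\,P_1\xi_{-n}(0)\big]^2$ and then inserts Lemma~\ref{lem_cov_P1xi}; you simply make explicit, via Isserlis' formula, the step the paper calls an immediate consequence of the Wick ordering.
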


\begin{proof}
    The statement is an immediate consequence of  the definition of
    the Wick ordering:
    \begin{equation*}
        \begin{aligned}
            \EE \big[
                (P_{1}
                \xi_{-n})^{\diamond 2}(x)
                (P_{1}
                \xi_{-n})^{\diamond 2}(0)\big]
            =
            2
            \big(
            \EE \big[
                (P_{1} \xi_{-n})(x)
                (P_{1}\xi_{-n})(0)\big]
            \big)^{2}\,,
        \end{aligned}
    \end{equation*}
and    Lemma~\ref{lem_cov_P1xi}.
\end{proof}


\end{document}